\newcommand{\scal}[2]{\langle #1,#2\rangle}
\newcommand{\re}{\mathbf R}
\newcommand{\rr}[1]{\mathbf R^{#1}}
\newcommand{\zz}[1]{\mathbf Z^{#1}}
\newcommand{\nn}[1]{\mathbf N^{#1}}
\newcommand{\nm}[2]{\Vert #1\Vert _{#2}}
\newcommand{\nmm}[1]{\Vert #1\Vert }
\newcommand{\abp}[1]{\vert #1\vert}
\newcommand{\op}{\operatorname{Op}}
\newcommand{\sets}[2]{\{ {\,}#1{\,};{\,}#2{\,}\} }
\newcommand{\ep}{\varepsilon}
\newcommand{\fy}{\varphi}
\newcommand{\cdo}{\, \cdot \, }
\newcommand{\wpr}{{\text{\footnotesize{$\#$}}}}
\newcommand{\eabs}[1]{\langle #1\rangle}
\newcommand{\vrum}{\vspace{0.1cm}}
\newcommand{\masfR}{\mathsf R}
\newcommand{\maclM}{\mathcal M}
\newcommand{\maclS}{\mathcal S}
\newcommand{\maclV}{\mathcal V}
\newcommand{\mascF}{\mathscr F}
\newcommand{\mascS}{\mathscr S}
\newcommand{\mascP}{\mathscr P}
\newcommand{\splM}{\EuScript M}
\newcommand{\splW}{\EuScript W}
\def\rdd{\rr {2d}}
\numberwithin{equation}{section}          
\newtheorem{thm}{Theorem}
\numberwithin{thm}{section}
\newtheorem*{tom}{\rubrik}
\newcommand{\rubrik}{}
\newtheorem{prop}[thm]{Proposition}
\newtheorem{lemma}[thm]{Lemma}
\theoremstyle{definition}
\newtheorem{defn}[thm]{Definition}
\theoremstyle{remark}
\newtheorem{rem}[thm]{Remark}              
\title{Sharp results for the Weyl product on modulation spaces}
\author{Elena Cordero}
\address{Department of Mathematics, University of Turin, Via Carlo Alberto
10, 10123 Torino (TO), Italy.}
\email{elena.cordero@unito.it}
\author{Joachim Toft}
\address{Department of Mathematics, Linn{\ae}us University, V{\"a}xj{\"o},
Sweden}
\email{joachim.toft@lnu.se}
\author{Patrik Wahlberg}
\address{Department of Mathematics, Linn{\ae}us University, V{\"a}xj{\"o},
Sweden}
\email{patrik.wahlberg@lnu.se}
\keywords{Weyl product, modulation spaces, twisted convolution, sharpness.
MSC 2010 codes: 35S05,42B35,44A35,46E35,46F12}
\begin{document}

\begin{abstract}
We give sufficient and necessary conditions on the Lebesgue exponents
for the Weyl product to be bounded on modulation spaces. 
The sufficient conditions are obtained as the restriction to $N=2$ of a
result valid for the $N$-fold Weyl product. As a byproduct, we obtain sharp
conditions for the twisted convolution to be bounded on Wiener
amalgam spaces.
\end{abstract}

\maketitle

\section{Introduction}\label{sec0}

In the paper we prove necessary and sufficient conditions for
the Weyl product to be continuous on modulation spaces, and for the
twisted convolution to be continuous on Wiener amalgam spaces.
We relax the sufficient conditions in \cite{HTW} and we prove that
the obtained conditions are also necessary.

\par

The Weyl calculus is a part of the theory of pseudo-differential operators.
For an appropriate distribution $a$ (the symbol) defined on the phase space
$T^*\rr d \simeq \rr {2d}$, the \emph{Weyl operator} $\op ^w(a)$
is a linear map between spaces of functions or distributions on $\rr d$.
(See Section \ref{sec1} for definitions.)
Weyl operators appear in various fields. 
In mathematical analysis they are used to represent linear operators,
in particular linear partial differential operators, acting between
appropriate function and distribution spaces. Weyl operators also appear in
quantum mechanics where a real-valued observable $a$ in classical
mechanics corresponds to the self-adjoint Weyl operator
$\op ^w(a)$ in quantum mechanics. For this reason
$\op ^w(a)$ is often called the Weyl quantization of $a$. In time-frequency
analysis pseudo-differential operators are used as models of non-stationary
filters.

\par

In the Weyl calculus operator composition
corresponds on the symbol level to the \emph{Weyl product},
or the \emph{twisted product}, denoted by $\wpr$.
This means that the Weyl product $a_1\wpr a_2$ of appropriate
functions or distributions $a_1$ and $a_2$ satisfies
$$
\op ^w(a_1\wpr a_2) = \op ^w(a_1)\circ \op ^w(a_2).
$$
A basic problem is to find conditions
that are necessary or sufficient for the bilinear map
\begin{equation}\label{Weylmap}
(a_1,a_2)\mapsto a_1\wpr a_2
\end{equation}
to be well-defined and continuous. Here we investigate these questions
when the factors belong to modulation spaces, a family of Banach
spaces of distributions which appear in time-frequency analysis,
harmonic analysis and Gabor analysis.

\par

The modulation spaces were introduced by Feichtinger \cite{Feichtinger2},
and their theory  was further developed and generalized by Feichtinger
and Gr{\"o}che-nig \cite{Feichtinger3,Feichtinger4,Feichtinger5,Grochenig0a}
into the theory of coorbit spaces. 

\par

The modulation space $M^{p,q}_{(\omega )}(\rr d)$, where $p,q \in
[1,\infty]$ and $\omega$ is a weight on $\rr d\times \rr d\simeq \rr {2d}$,
consists of all tempered distributions, or ultra-distributions, on $\rr d$,
whose short-time Fourier transforms have finite
$L^{p,q}_{(\omega )}(\rr {2d})$ norm. Thus the Lebesgue exponents
$p$ and $q$, and above all the weight $\omega$,
give a scale of function spaces $M^{p,q}_{(\omega )}$ with respect to
phase space concentration.
The definition of modulation spaces resembles that of Besov spaces,
and narrow embeddings between modulation and Besov spaces have
been found (cf. \cite{Grobner, HanWang, Okoudjou, Sugimoto1, Toft2,
Toft4, TW, WH}).
Depending on the assumptions on the weights, the modulation
spaces are subspaces of the tempered distributions or
ultra-distributions (cf. \cite{Cordero, Pilipovic1, Pilipovic2, Toft8, Toft9}).

\par

Since the early 1990s modulation spaces have been used
in the theory of pseudo-differential operators (cf. \cite{Tachizawa1}). 
Sj{\"o}strand \cite{Sjo1} introduced the modulation
space $M ^{\infty,1}(\rr {2d})$, which contains non-smooth functions,
as a symbol class.  He proved that $M ^{\infty,1}$ corresponds to an
algebra of $L^2$-bounded operators,

\par

Gr{\"o}chenig and Heil \cite{Grochenig0,Grochenig2} proved
that each operator with symbol in $M ^{\infty,1}$
is continuous on all modulation spaces $M^{p,q}$, $p,q \in
[1,\infty]$. This extends Sj{\"o}strand's $L^2$-continuity result since
$M^{2,2}=L^2$. Some generalizations to operators with
symbols in unweighted modulation spaces were obtained in
\cite{Grochenig1b,Toft2}, and \cite{Toft3, Toft5, Toft9} 
contain extensions to weighted modulation spaces.

\par

Concerning the algebraic properties of the Weyl calculus
(cf. \cite{Folland,GraciaVarilly,Ho3}) with respect to modulation
spaces, Sj\"ostrand's results \cite{Sjo1,Sjo2} were refined in \cite{Toft0}, and 
new results were found by Labate \cite{Labate1}, Gr\"ochenig and
Rzeszotnik \cite{GrochenigRzeszotnik}, and by Holst and two of the authors
\cite{HTW}. 

\par

Our main result in this paper is a multi-linear version of a generalization of
\cite[Theorem 0.3$'$]{HTW} which concerns sufficient conditions
for continuity of the Weyl product on modulation spaces.
We also prove that the sufficient conditions are necessary in the bilinear
case, for a certain family of weight functions.

\par

The Weyl product \eqref{Weylmap} is continuous $\maclS _s(\rr {2d}) \times \maclS
_s(\rr {2d}) \mapsto \maclS _s(\rr {2d})$, where $\maclS _s(\rr {2d})$ denotes the Gelfand--Shilov space of order $s$, 
for every $s\ge 0$. 
In order to explain our extension of this result to modulation spaces,
we introduce the H{\"o}lder--Young exponent function
\begin{equation}\label{HYfunctional}
\masfR _2(\mathbf p) =\sum _{j=0}^2\frac 1{p_j} -1,\quad
\mathbf p = (p_0,p_1,p_2)\in [1,\infty ]^{3}
\end{equation}
and consider weights $\omega _j$, $j=0,1,2$, in
$\mascP _E(\rr {4d})$, the set of moderate weights on $\rr  {4d}$.
We suppose that
\begin{multline}\label{weightcond}
C\le \omega _0(X_2+X_0,X_2-X_0)\prod _{j=1}^2
\omega _j(X_j+X_{j-1},X_j-X_{j-1}),
\\[1ex]
X_0,X_1,X_2\in \rr {2d},
\end{multline}
holds for some $C>0$.

\par

With these terms our result in the bilinear case on sufficient conditions
for continuity of the Weyl product reads as follows. Here
$\splM ^{p,q}_{(\omega )}(\rr {2d})$, as opposed to
$M ^{p,q}_{(\omega )}(\rr {2d})$, is the modulation space
defined with the symplectic Fourier transform instead of the
usual Fourier transform. 

\par

\begin{thm}\label{thm0.3}
Let $p_j,q_j\in [1,\infty ]$, $j=0,1,2$, and suppose
\begin{equation}\label{pqconditions}
\max \left ( \masfR _2(\mathbf q') ,0 \right )
\le  \min _{j=0,1,2}
\left ( \frac {1}{p_j},\frac {1}{q_j'}, \masfR _2(\mathbf p)\right ).
\end{equation}
Let $\omega _j\in \mascP _E(\rr {4d})$, $j=0,1,2$, and suppose
\eqref{weightcond} holds. Then the map
\eqref{Weylmap} from $\maclS _{1/2}(\rr {2d}) \times \maclS _{1/2}(\rr {2d})$
to $\maclS _{1/2}(\rr {2d})$
extends uniquely to a continuous map from $\splM ^{p_1,q_1}
_{(\omega _1)}(\rr {2d}) \times \splM ^{p_2,q_2}_{(\omega _2)}(\rr {2d})$
to $\splM ^{p_0',q_0'} _{(1/\omega _0)}(\rr {2d})$.
\end{thm}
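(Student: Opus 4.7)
The plan is to convert the Weyl product into a twisted convolution by means of the symplectic Fourier transform $\mascF _\sigma$, reducing Theorem \ref{thm0.3} to a mapping property of $\asts$ between Wiener amalgam spaces. Concretely, $\mascF _\sigma (a_1\wpr a_2) = c_d\, \mascF _\sigma a_1 \asts \mascF _\sigma a_2$ for an absolute constant $c_d$, and $\mascF _\sigma$ is an isomorphism from $\splM ^{p,q}_{(\omega )}(\rdd)$ onto a Wiener amalgam space $\splW ^{p,q}_{(\omega ^*)}(\rdd)$, where $\omega ^*$ is obtained from $\omega$ by the symplectic coordinate flip. In these variables the desired conclusion becomes the continuity of $\asts$ from $\splW ^{p_1,q_1}_{(\omega _1^*)} \times \splW ^{p_2,q_2}_{(\omega _2^*)}$ into $\splW ^{p_0',q_0'}_{(1/\omega _0^*)}$, with the weight compatibility \eqref{weightcond} now reading exactly as the three-point submultiplicativity that makes this well posed.

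The next step is to discretize the twisted convolution using a Gabor frame. Fix a Gaussian window $\fy$ and the associated Gabor system $\{M_\xi T_x\fy\}$; since the Wiener amalgam spaces $\splW ^{p,q}_{(\omega )}$ admit Gabor atomic decompositions with coefficients in mixed norm sequence spaces $\ell ^{p,q}_{(\omega )}$, each factor $a_j$ is written as a superposition of shifted and modulated Gaussians. Because the twisted convolution of two Gaussian time-frequency shifts is again a Gaussian time-frequency shift with an explicit symplectic phase $e^{i\sigma (\cdoo ,\cdoo )}$ and an exponentially decaying amplitude in the relative lattice displacement, the estimate reduces to a discrete trilinear estimate between weighted mixed norm sequence spaces with a summable convolution-type kernel. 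The weight hypothesis \eqref{weightcond} transfers the weights from the two factors to the output through Hölder's inequality applied slice by slice.

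With the problem in discrete form, the core estimate is a mixed norm multilinear Young-type inequality on $\ell ^{p,q}$-sequences. The condition $\masfR _2(\mathbf p)\ge 0$ is precisely the admissibility range for ordinary Young in the spatial coordinate, while the dual condition $\masfR _2(\mathbf q')\le \min _j 1/q_j'$ reflects the Fourier-type (hence dualised) nature of the frequency coordinate under $\asts$. The extra constraint $\masfR _2(\mathbf q')\le 1/p_j$ for every $j$ comes from the standard trick of trading a spatial $\ell ^1$-loss for a frequency $\ell ^\infty$-loss (and vice versa) when the kernel is localized but phase oscillating; these four inequalities together carve out exactly the admissible polytope \eqref{pqconditions}. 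Multilinear complex interpolation between the endpoint cases (all $p_j=q_j=1$, the Hausdorff--Young endpoint, and the $M^{\infty ,1}$ Sjöstrand endpoint) then yields the full range. Finally, uniqueness of the extension from $\maclS _{1/2}(\rdd )\times \maclS _{1/2}(\rdd )$ follows from the density of $\maclS _{1/2}(\rdd )$ in every $\splM ^{p,q}_{(\omega )}(\rdd )$ with $p,q<\infty$, together with a standard duality argument for the remaining endpoints.

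The main obstacle will be the simultaneous optimisation of the four inequalities in \eqref{pqconditions}. Unlike ordinary convolution, twisted convolution mixes the spatial and frequency coordinates through the symplectic phase, so a naive application of Young's inequality in each variable separately is far from sharp. Handling this requires selecting, in each endpoint interpolation node, the correct splitting of the kernel into a smooth oscillatory piece (treated by integration by parts in the symplectic phase) and a remainder in a Schatten-type class; controlling the weight transfer \eqref{weightcond} uniformly along this interpolation, while retaining moderate constants as the exponents approach the boundary of the admissible region, is the delicate technical step.
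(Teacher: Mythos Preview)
Your plan diverges from the paper's argument and, more importantly, has two concrete gaps.

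First, the interpolation scheme is underspecified and almost certainly insufficient as stated. You propose to interpolate between ``all $p_j=q_j=1$, the Hausdorff--Young endpoint, and the $M^{\infty ,1}$ Sj\"ostrand endpoint'', but these do not span the admissible polytope \eqref{pqconditions} in the six variables $(1/p_j,1/q_j)$. The paper's route is different and more structured: it first proves the result directly (via the integral representation of $\maclV _{\Phi _0}(a_1\wpr a_2)$ in Lemma \ref{prodlemma}, together with Minkowski, H\"older and Young) on the Young-type region $\masfR _2(\mathbf q')\le 0\le \masfR _2(\mathbf p)$ (Proposition \ref{prop1}); then proves the Hilbert--Schmidt endpoint $p_j=q_j=2$ by repeated H\"older on the same formula; then interpolates between these two to obtain Proposition \ref{prop2}, which still carries the extra constraints $\masfR _2(\mathbf q')\le 1/p_j'$ and $\masfR _2(\mathbf q')\le 1/q_j$; and finally removes those extra constraints not by further interpolation but by an embedding trick, replacing $p_j$ by $u_j=\max (p_j,r')$ with $1/r=\masfR _2(\mathbf q')$, together with the combinatorial Lemma \ref{lemmaAthm0.3}. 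Your sketch contains no analogue of this last step, and a pure interpolation between the three endpoints you name will miss, for instance, the inclusion $\splM ^{1,2}\wpr \splM ^{2,2}\subseteq \splM ^{2,2}$.

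Second, the uniqueness argument is genuinely incomplete. Density of $\maclS _{1/2}$ in $\splM ^{p,q}_{(\omega )}$ requires $p,q<\infty$, and a ``standard duality argument'' does not cover, e.g., the case $p_1=\infty$, $q_0>1$, $q_j<\infty$ for all $j$: there the target space $\splM ^{p_0',q_0'}_{(1/\omega _0)}$ is not a dual, and the first factor cannot be approximated in norm. The paper handles this via \emph{narrow convergence} (Definition \ref{p2.1}, Proposition \ref{narrowprop}): one approximates $a_1\in \splM ^{\infty ,q_1}_{(\omega _1)}$ narrowly by $a_{1,n}\in \maclS _{1/2}$, dominates $H_{a_{1,n},\omega _1,\infty}$ by $g_{1,n}\to g_1$ in $L^{q_1}$, and uses Lemma \ref{GenLebThm} (a generalised dominated convergence theorem) together with Lemma \ref{intconv} to show that $a_{1,n}\wpr a_2$ is narrowly Cauchy in $\splM ^{\infty ,q_0'}_{(1/\omega _0)}$. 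This machinery is essential and absent from your proposal.

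As a side remark, the paper does not pass through Gabor discretisation or twisted convolution at all for the positive direction: it works directly with the continuous STFT formula \eqref{STFTintegral}, which already has the structure of an $(N-1)$-fold convolution after the change of variables $X_j\mapsto X_j+X_0$. The symplectic phase $e^{2iQ}$ plays no role in the estimates, so the ``integration by parts in the symplectic phase'' and ``Schatten-type remainder'' you anticipate are unnecessary.
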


\par

This result is the restriction to $N=2$ of a multi-linear result treating
the Weyl product of $N$ factors $a_1 \wpr \dots \wpr a_N$ proved
in Section \ref{sec2} (see Theorem  \ref{thm0.3}$'$). 
Theorem \ref{thm0.3} extends all results in the literature, familiar to us,
on the Weyl product acting on modulation spaces, in particular
\cite[Theorem 0.3$'$]{HTW} and its slight extension
\cite[Theorem 6.4]{Toft8}. In Section \ref{sec4} we present a table
which explains the difference between \cite[Theorem 0.3$'$]{HTW}
and Theorem  \ref{thm0.3} in the important cases when the
Lebesgue exponents $p_j,q_j$ belong to $\{ 1,2,\infty \}$.

\par

In Section \ref{sec2} we also present a parallel result to
Theorem  \ref{thm0.3}$'$ on sufficient conditions for
continuity of the Weyl product on modulation spaces. 
It gives continuity in certain cases not covered by Theorem
\ref{thm0.3}$'$ with $N>2$,  e.{\,}g. when several of the Weyl operators are
Hilbert--Schmidt operators (cf. Theorem \ref{thm0.4}).
Section \ref{sec2} ends with a continuity result for the
twisted convolution on Wiener amalgam spaces 
(cf. Theorem \ref{thm0.3TwistConv}). 

\par

In Section \ref{sec3} we prove that Theorem \ref{thm0.3} is sharp
with respect to the conditions on the Lebesgue exponents
$p_j$ and $q_j$,
for triplets $(\omega_0,\omega_1,\omega_2)$ of
polynomially moderate weights that are interrelated in a certain way
(see \eqref{weightcond2})
which implies that \eqref{weightcond} is automatically satisfied. 
The sharpness means that
\eqref{pqconditions} must hold when the map \eqref{Weylmap} from 
$\maclS _{1/2} \times \maclS _{1/2}$ to $\maclS _{1/2}$ is extendable
to a continuous map from
$\splM ^{p_1,q_1}_{(\omega _1)} \times \splM ^{p_2,q_2}
_{(\omega _2)}$ to $\splM ^{p_0',q_0'}_{(1/\omega _0)}$ (cf. Theorem
\ref{thm0.3Rev}).

\par

\section{Preliminaries}\label{sec1}

\par

In this section we introduce notation and discuss the background on 
Gelfand--Shilov spaces, 
pseudo-differential operators, the Weyl
product, twisted convolution and modulation spaces.
Most proofs can be found in the literature and are therefore omitted. 

\par

Let $0<h,s\in \mathbf R$ be fixed. The space $\mathcal S_{s,h}(\rr d)$
consists of all $f\in C^\infty (\rr d)$ such that
\begin{equation*}
\nm f{\mathcal S_{s,h}}\equiv \sup \frac {|x^\beta \partial ^\alpha
f(x)|}{h^{|\alpha | + |\beta |}\alpha !^s\, \beta !^s}
\end{equation*}
is finite, with supremum taken over all $\alpha ,\beta \in
\mathbf N^d$ and $x\in \rr d$.

\par

The space $\mathcal S_{s,h} \subseteq
\mathscr S$ ($\mathscr S$ denotes the Schwartz space) is a Banach space
which increases with $h$ and $s$.
Inclusions between topological spaces are understood to be continuous. 
If $s>1/2$, or $s =1/2$ and $h$ is sufficiently large, then $\mathcal
S_{s,h}$ contains all finite linear combinations of Hermite functions.
Since the space of such linear combinations is dense in $\mathscr S$, it follows
that the topological dual $(\mathcal S_{s,h})'(\rr d)$ of $\mathcal S_{s,h}(\rr d)$ is
a Banach space which contains $\mathscr S'(\rr d)$.

\par

The \emph{Gelfand--Shilov spaces} $\mathcal S_{s}(\rr d)$ and
$\Sigma _{s}(\rr d)$ (cf. \cite{GS}) are the inductive and projective limits, respectively,
of $\mathcal S_{s,h}(\rr d)$, with respect to the parameter $h$. Thus
\begin{equation}\label{GSspacecond1}
\mathcal S_{s}(\rr d) = \bigcup _{h>0}\mathcal S_{s,h}(\rr d)
\quad \text{and}\quad \Sigma _{s}(\rr d) =\bigcap _{h>0}\mathcal S_{s,h}(\rr d),
\end{equation}
where $\mathcal S_{s}(\rr d)$ is equipped with the the strongest topology such
that the inclusion map from $\mathcal S_{s,h}(\rr d)$ into $\mathcal S_{s}(\rr d)$
is continuous, for every choice of $h>0$. The space $\Sigma _s(\rr d)$ is a
Fr{\'e}chet space with seminorms
$\nm \cdo{\mathcal S_{s,h}}$, $h>0$. We have $\Sigma _s(\rr d)\neq \{ 0\}$
if and only if $s>1/2$, and $\maclS _s(\rr d)\neq \{ 0\}$
if and only if $s\ge 1/2$. From now on we assume that $s>1/2$ when we
consider $\Sigma _s(\rr d)$, and $s\ge 1/2$ when we consider $\maclS
_s(\rr d)$.

\medspace

The \emph{Gelfand--Shilov distribution spaces} $\mathcal S_{s}'(\rr d)$
and $\Sigma _s'(\rr d)$ are the projective and inductive limit
respectively of $\mathcal S_s'(\rr d)$.  This means that
\begin{equation}\tag*{(\ref{GSspacecond1})$'$}
\mathcal S_s'(\rr d) = \bigcap _{h>0}\mathcal S_{s,h}'(\rr d)\quad
\text{and}\quad \Sigma _s'(\rr d) =\bigcup _{h>0} \mathcal S_{s,h}'(\rr d).
\end{equation}
In \cite{GS, Ko, Pil} it is proved that $\mathcal S_s'(\rr d)$
is the topological dual of $\mathcal S_s(\rr d)$, and $\Sigma _s'(\rr d)$
is the topological dual of $\Sigma _s(\rr d)$.

\par

For each $\ep >0$ and $s>1/2$ we have
\begin{equation}\label{GSembeddings}
\begin{alignedat}{3}
\maclS _{1/2}(\rr d) & \subseteq &\Sigma _s (\rr d) & \subseteq&
\maclS _s(\rr d) & \subseteq \Sigma _{s+\ep}(\rr d)
\\[1ex]
\quad \text{and}\quad
\Sigma _{s+\ep}' (\rr d) & \subseteq  & \maclS _s'(\rr d)
& \subseteq & \Sigma _s'(\rr d) & \subseteq \maclS _{1/2}'(\rr d).
\end{alignedat}
\end{equation}

\par

The Gelfand--Shilov spaces are invariant under several basic operations,
e.{\,}g. translations, dilations, tensor products
and (partial) Fourier transformation.

\par

We normalize the Fourier transform of $f\in L^1(\rr d)$ as
$$
(\mathscr Ff)(\xi )= \widehat f(\xi ) \equiv (2\pi )^{-d/2}\int _{\rr
{d}} f(x)e^{-i\scal  x\xi }\, dx,
$$
where $\scal \cdo \cdo$ denotes the scalar
product on $\rr d$. The map $\mathscr F$ extends
uniquely to homeomorphisms on $\mathscr S'(\rr d)$, $\mathcal S_s'(\rr d)$
and $\Sigma _s'(\rr d)$, and restricts to
homeomorphisms on $\mathscr S(\rr d)$, $\mathcal S_s(\rr d)$ and
$\Sigma _s(\rr d)$, and to a unitary operator on $L^2(\rr d)$.

\medspace

Next we recall some basic facts from pseudo-differential calculus
(cf. \cite{Ho3}). Let $s\ge 1/2$, $a\in \maclS _s
(\rr {2d})$, and $t\in \mathbf R$ be fixed. The
pseudo-differential operator $\op _t(a)$ defined by
\begin{equation}\label{e0.5}
\op _t(a)f(x) = (2\pi )^{-d}\iint _{\rr {2d}}a((1-t)x+ty,\xi )f(y)e^{i\scal {x-y}\xi}\,
dyd\xi
\end{equation}
is a linear and continuous operator on $\maclS _s (\rr d)$.
For $a\in \maclS _s'(\rr {2d})$ the
pseudo-differential operator $\op _t(a)$ is defined as the continuous
operator from $\maclS _s(\rr d)$ to $\maclS _s'(\rr d)$ with
distribution kernel given by
\begin{equation}\label{atkernel}
K_{a,t}(x,y)=(2\pi )^{-d/2}(\mascF _2^{-1}a)((1-t)x+ty,x-y).
\end{equation}
Here $\mascF _2F$ is the partial Fourier transform of $F(x,y)\in
\maclS _s'(\rr {2d})$ with respect to the variable $y \in \rr d$. This
definition generalizes \eqref{e0.5} and is well defined, since the mappings
\begin{equation}\label{homeoF2tmap}
\mascF _2\quad \text{and}\quad F(x,y)\mapsto F((1-t)x+ty,y-x)
\end{equation}
are homeomorphisms on $\maclS _s'(\rr {2d})$.
The map $a\mapsto K_{a,t}$ is hence a homeomorphism on
$\maclS _s'(\rr {2d})$.

\par

For any $K\in \maclS '_s(\rr {d_1+d_2})$, let $T_K$ be the
linear and continuous mapping from $\maclS _s(\rr {d_1})$
to $\maclS _s'(\rr {d_2})$ defined by
\begin{equation}\label{pre(A.1)}
(T_Kf,g)_{L^2(\rr {d_2})} = (K,g\otimes \overline f )_{L^2(\rr {d_1+d_2})},
\quad f \in \maclS _s(\rr {d_1}), \quad g \in \maclS _s(\rr {d_2}).
\end{equation}
It is a well known consequence of the Schwartz kernel
theorem that if $t\in \mathbf R$, then $K\mapsto T_K$ and $a\mapsto
\op _t(a)$ are bijective mappings from $\mascS '(\rr {2d})$
to the space of linear and continuous mappings from $\mascS (\rr d)$ to
$\mascS '(\rr d)$ (cf. e.{\,}g. \cite{Ho3}).

\par

Likewise the maps $K\mapsto T_K$
and $a\mapsto \op _t(a)$ are uniquely extendable to bijective
mappings from $\maclS _s'(\rr {2d})$ to the set of linear and
continuous mappings from $\maclS _s(\rr d)$ to $\maclS _s'(\rr d)$.
In fact, the asserted bijectivity for the map $K\mapsto T_K$ follows from
the kernel theorem \cite[Theorem 2.3]{LozPer} (cf. \cite[vol. IV]{GS}).
This kernel theorem corresponds to the Schwartz kernel theorem
in the usual distribution theory.
The other assertion follows from the fact that $a\mapsto K_{a,t}$
is a homeomorphism on $\maclS _s'(\rr {2d})$.

\par

In particular, for each $a_1\in \maclS _s '(\rr {2d})$ and $t_1,t_2\in
\mathbf R$, there is a unique $a_2\in \maclS _s '(\rr {2d})$ such that
$\op _{t_1}(a_1) = \op _{t_2} (a_2)$. The relation between $a_1$ and $a_2$
is given by
\begin{equation}\label{calculitransform}
\op _{t_1}(a_1) = \op _{t_2}(a_2) \qquad \Longleftrightarrow \qquad
a_2(x,\xi )=e^{i(t_1-t_2)\scal {D_x }{D_\xi}}a_1(x,\xi ).
\end{equation}
(Cf. \cite{Ho3}.) Note that the right-hand side makes sense, since
it means $\widehat a_2(x,\xi)=e^{i(t_1-t_2)\scal x \xi }\widehat a_1(x,\xi)$,
and since the map $a(x,\xi )\mapsto e^{it\scal x\xi }a(x,\xi )$ is continuous on
$\maclS _s '(\rr {2d})$.

\medspace

Next we discuss the Weyl product, twisted convolution and related
operations (see \cite{Ho3,Folland}). Let $s\ge 1/2$ and let $a,b\in
\maclS _s '(\rr {2d})$. The Weyl product $a\wpr
b$ between $a$
and $b$ is the function or distribution which satisfies $\op ^w(a\wpr
b) = \op ^w(a)\circ \op ^w(b)$, provided the right-hand side
makes sense as a continuous operator from $\maclS _s(\rr d)$ to
$\maclS _s'(\rr d)$. 

\par

The Wigner distribution is defined by
\begin{equation*}
W_{f,g}(x,\xi) = \mathscr F( f(x+\cdo /2) \overline{g(x-\cdo /2)}
)(\xi ), \quad f,g \in \maclS _{1/2}'(\rr d), 
\end{equation*}
and takes the form
\begin{equation*}
W_{f,g}(x,\xi) = (2\pi )^{-d/2} \int_{\rr d} f(x+y/2) \overline{g(x-y/2)}
e^{- i \scal y \xi} \, dy, 
\end{equation*}
when 
$f,g \in \maclS _{1/2} (\rr d)$. 
The Wigner distribution
appears in the Weyl calculus in the formula
\begin{align*}
(\op^w (a) f,g)_{L^2(\rr d)} & = (2\pi )^{-d/2} (a,W_{g,f} )_{L^2(\rr {2d})},
\\[1ex]
& \qquad a \in  \maclS _{1/2} '(\rr {2d}), \quad f,g \in \maclS _{1/2} (\rr d). 
\end{align*}

\par

The Weyl product can be expressed in terms
of the symplectic Fourier transform and the twisted convolution. 
The \emph{symplectic Fourier transform} of $a \in
\maclS _s (\rr {2d})$, where $s\ge 1/2$, is defined by 
\begin{equation*}
(\mascF _\sigma a) (X)
= \pi^{-d}\int _{\rr {2d}} a(Y) e^{2 i \sigma(X,Y)}\,  dY,
\end{equation*}
where $\sigma$ is the symplectic form
$$
\sigma(X,Y) = \scal y \xi -
\scal x \eta ,\qquad X=(x,\xi )\in \rr {2d},\ Y=(y,\eta )\in \rr {2d}.
$$

\par

We note that $\mascF _\sigma = T\circ (\mascF^{-1} \otimes \mascF )$, when
$(Ta)(x,\xi) = 2^d a(2\xi ,2x)$. 
The symplectic Fourier transform ${\mascF _\sigma}$ is continuous on
$\maclS _s (\rr {2d})$ and extends uniquely to a homeomorphism on
$\maclS _s '(\rr {2d})$, and to a unitary map on $L^2(\rr {2d})$, since similar
facts hold for $\mascF$. Furthermore $\mascF _\sigma^{2}$ is the identity
operator.

\par

Let $s\ge 1/2$ and $a,b\in \maclS _s (\rr {2d})$. The \emph{twisted
convolution} of $a$ and $b$ is defined by
\begin{equation}\label{twist1}
(a \ast _\sigma b) (X)
= (2/\pi)^{d/2} \int _{\rr {2d}}a(X-Y) b(Y) e^{2 i \sigma(X,Y)}\, dY.
\end{equation}
The definition of $*_\sigma$ extends in different ways. For example
it extends to a continuous multiplication on $L^p(\rr {2d})$ when $p\in
[1,2]$, and to a continuous map from $\maclS _s '(\rr {2d})\times
\maclS _s (\rr {2d})$ to $\maclS _s '(\rr {2d})$. If $a,b \in
\maclS _s '(\rr {2d})$, then $a \wpr b$ makes sense if and only if $a
*_\sigma \widehat b$ makes sense, and
\begin{equation}\label{tvist1}
a \wpr b = (2\pi)^{-d/2} a \ast_\sigma (\mascF _\sigma {b}).
\end{equation}
For the twisted convolution we have
\begin{equation}\label{weylfourier1}
\mascF _\sigma (a *_\sigma b) = (\mascF _\sigma a) *_\sigma b =
\check{a} *_\sigma (\mascF _\sigma b),
\end{equation}
where $\check{a}(X)=a(-X)$ (cf. \cite{Toft1}). A
combination of \eqref{tvist1} and \eqref{weylfourier1} gives
\begin{equation}\label{weyltwist2}
\mascF _\sigma (a\wpr b) = (2\pi )^{-d/2}(\mascF _\sigma
a)*_\sigma (\mascF _\sigma b).
\end{equation}
If $\widetilde a(X) = \overline{a(-X)}$ then
\begin{equation*}
(a_1*_\sigma a_2,b) = (a_1,b*_\sigma \widetilde a_2)=(a_2,\widetilde
a_1*_\sigma b),\quad (a_1*_\sigma a_2)*_\sigma b = a_1*_\sigma
(a_2*_\sigma b),
\end{equation*}
for appropriate $a_1, a_2,b$, and furthermore (cf. \cite{HTW})
\begin{equation}\label{duality0}
(a_1 \wpr a_2, b) = (a_2, \overline{a_1} \wpr b) = (a_1, b \wpr
\overline{a_2}).
\end{equation}

\medspace

Next we turn to the basic properties of modulation spaces, and start by
recalling the conditions for the involved weight functions. Let
$0<\omega, v\in L^\infty _{\rm loc}(\rr d)$. Then $\omega$ is called
\emph{moderate} or \emph{$v$-moderate} if
\begin{equation}\label{moderate}
\omega (x+y) \lesssim \omega (x)v(y),\quad x,y\in \rr d.
\end{equation}
Here the notation $f(x) \lesssim g(x)$ means that there exists $C>0$
such that $f(x) \leq C g(x)$ for all arguments $x$ in the domain of $f$
and $g$. If $f \lesssim g$ and $g \lesssim f$ we write $f \asymp g$. 
The function $v$ is called \emph{submultiplicative}
if it is even and \eqref{moderate} holds when $\omega =v$. We note that if
\eqref{moderate} holds then
$$
v^{-1}\lesssim \omega  \lesssim v.
$$
For such $\omega$ it follows that \eqref{moderate} is true when
$$
v(x) =Ce^{c|x|},
$$
for some positive constants $c$ and $C$ (cf. \cite{Grochenig5}).
In particular, if $\omega$ is moderate on $\rr d$, then
$$
e^{-c|x|}\lesssim \omega (x)\lesssim e^{c|x|},
$$
for some $c>0$.

\par

The set of all moderate functions on $\rr d$
is denoted by $\mascP _E(\rr d)$.
If $v$ in \eqref{moderate}
can be chosen as  $v(x)=\eabs{x}^s=(1+|x|^2)^{s/2}$ for some
$s \ge 0$, then $\omega$ is
said to be of polynomial type or polynomially moderate. We let
$\mascP (\rr d)$ be the set
of all polynomially moderate functions on $\rr d$.

\medspace

Let $\phi \in \maclS _s (\rr d) \setminus 0$ be
fixed. The \emph{short-time Fourier transform} (STFT) $V_\phi f$ of $f\in
\maclS _s ' (\rr d)$ with respect to the \emph{window function} $\phi$ is
the Gelfand--Shilov distribution on $\rr {2d}$ defined by
$$
V_\phi f(x,\xi ) \equiv 
\mascF (f \, \overline {\phi (\cdo -x)})(\xi ).
$$

\par

For $a \in \maclS _{1/2} '(\rr {2d})$ and
$\Phi \in \maclS _{1/2} (\rr {2d})  \setminus 0$ the
\emph{symplectic short-time Fourier transform} $\maclV _{\Phi} a$
of $a$ with respect to $\Phi$ is the defined similarly as
\begin{equation}\nonumber
\maclV _{\Phi} a(X,Y) = \mascF _\sigma \big( a\, \overline{ \Phi (\cdo -X) }
\big) (Y),\quad X,Y \in \rr {2d}.
\end{equation}
We have
\begin{multline}\label{stftcompare}
\maclV _{\Phi} a(X,Y) = 2^d V_\Phi a(x,\xi , -2\eta ,2y),
\\[1ex]
X=(x,\xi )\in \rr {2d},\ Y=(y,\eta )\in \rr {2d},
\end{multline}
which shows the close connection between $V_\Phi a$
and $\maclV _{\Phi} a$. 
The Wigner distribution $W_{f,\phi}$ and $V_\phi f$ are also closely related.

\par

If $f ,\phi \in \maclS _s (\rr d)$ and $a,\Phi \in \maclS _s (\rr {2d})$ then
\begin{align*}
V_\phi f(x,\xi ) &= (2\pi )^{-d/2}\int f(y)\overline {\phi
(y-x)}e^{-i\scal y\xi}\, dy 
\intertext{and}
\maclV _\Phi a(X,Y ) &= \pi ^{-d}\int a(Z)\overline {\Phi
(Z-X)}e^{2i\sigma (Y,Z)}\, dZ .
\end{align*}

\par

Let $\omega \in \mascP _E (\rr {2d})$, $p,q\in [1,\infty ]$
and $\phi \in \maclS _{1/2} (\rr d)\setminus 0$ be fixed. The
\emph{modulation space} $M^{p,q}_{(\omega )}(\rr d)$ consists of
all $f\in \maclS _{1/2} '(\rr d)$ such that
\begin{align}
\nm f{M^{p,q}_{(\omega )}} &\equiv \Big (\int _{\rr d}\Big (\int _{\rr d}
|V_\phi f(x,\xi )\omega (x,\xi )|^p\, dx\Big )^{q/p}\, d\xi \Big )^{1/q}
\label{modnorm1}
\intertext{is finite, and the Wiener amalgam
space $W^{p,q}_{(\omega )} (\rr d)$ consists of all $f\in
\maclS _{1/2} '(\rr d)$ such that}
\nm f {W^{p,q}_{(\omega )}} &\equiv \Big (\int _{\rr d}\Big (\int _{\rr d}
|V_\phi f(x,\xi )\omega (x,\xi )|^q\, d\xi \Big )^{p/q}\, dx \Big )^{1/p}
\label{modnorm2}
\end{align}
is finite (with obvious modifications in \eqref{modnorm1} and
\eqref{modnorm2} when $p=\infty$ or $q=\infty$).

\par

\begin{rem}\label{MoreWeightClasses}
As follows from Proposition \ref{p1.4} (2) below we have that in fact
$M^{p,q}_{(\omega )}(\rr d)$ contains the superspace $\Sigma _1(\rr d)$
of $\maclS _{1/2}(\rr d)$, and is contained in the subspace $\Sigma _1'(\rr d)$
of $\maclS _{1/2}'(\rr d)$, when $\omega \in \mascP _E(\rr {2d})$. Hence we
could from the beginning have assumed that $f \in \Sigma _1'(\rr d)$ in
\eqref{modnorm1} and \eqref{modnorm2}.

\par

On the other hand, in \cite{Toft8}, certain weight classes containing $\mascP
_E(\rr {2d})$ and superexponential weights are introduced. For any $s>1/2$,
the corresponding families of modulation spaces are large enough to contain
superspaces of $\maclS _s'(\rr d)$ and subspaces of $\maclS _s(\rr d)$.

\par

However, we are not dealing with these large families of modulation spaces
because we need (1) and (2) in Proposition \ref{p1.4} which are not known to be
true for weights of this generality.
\end{rem}

\par

\begin{rem}
The literature contains slightly different conventions
concerning modulation and Wiener amalgam spaces. Sometimes our
definition of a Wiener amalgam space is considered as a particular
case of a general class of modulation spaces (cf.
\cite{Feichtinger1,Feichtinger2,Feichtinger6}). Our definition is
adapted to give the relation \eqref{twistfourmod} that suits our
purpose to transfer continuity for the Weyl product on
modulation spaces to continuity for twisted convolution on Wiener
amalgam spaces. 
\end{rem}

\par

On the even-dimensional phase space $\rr {2d}$ we may define
modulation spaces based on the symplectic STFT. 
Thus if $\omega \in \mascP _E (\rr {4d})$, $p,q\in [1,\infty ]$
and $\Phi \in \maclS _{1/2} (\rr {2d})\setminus 0$ are fixed, then
the \emph{symplectic modulation spaces}
$\splM ^{p,q}_{(\omega )}(\rr {2d})$ and Wiener amalgam spaces $\splW ^{p,q}
_{(\omega )}(\rr {2d})$ are obtained by replacing
the STFT $a\mapsto V_\Phi a$ by the corresponding
symplectic version $a\mapsto \maclV _\Phi a$ in \eqref{modnorm1} and
\eqref{modnorm2}.
(Sometimes the word \emph{symplectic} before modulation space is
omitted for brevity.) 
By \eqref{stftcompare} we have
$$
\splM ^{p,q}_{(\omega )}(\rr {2d}) = M ^{p,q}_{(\omega _0)}(\rr {2d}),
\quad \omega(x,\xi, y, \eta) = \omega_0 (x,\xi, -2 \eta, 2 y).
$$
It follows that all properties which are valid for $M ^{p,q}_{(\omega )}$
carry over to $\splM ^{p,q}_{(\omega )}$.

\par

From
\begin{equation}\label{FourSTFTs}
V_{\widehat \phi}\widehat f (\xi ,-x) =e^{i\scal x\xi}V_{\phi}f(x,\xi )
\end{equation}
it follows that
$$
f\in W^{q,p}_{(\omega )}(\rr d)\quad \Longleftrightarrow \quad
\widehat f\in M^{p,q}_{(\omega _0)}(\rr d),\qquad \omega _0(\xi
,-x)=\omega (x,\xi ).
$$
In the symplectic situation these formulas read
\begin{equation}\label{stftsymplfour}
\maclV _{\mascF _\sigma \Phi}(\mascF _\sigma a)(X,Y) =
e^{2i\sigma (Y,X)}\maclV _\Phi a(Y,X)
\end{equation}
and
\begin{equation}\label{twistfourmod}
\mascF _\sigma \splM ^{p,q}_{(\omega )}(\rr {2d}) = \splW
^{q,p}_{(\omega _0)}(\rr {2d}), \qquad \omega _0(X,Y)=\omega (Y,X).
\end{equation}

\par

For brevity we denote $\splM ^p _{(\omega )}= \splM ^{p,p}_{(\omega
)}$, $\splW ^p_{(\omega )}=\splW ^{p,p}_{(\omega
)}$, and when $\omega \equiv 1$ we write $\splM ^{p,q}=\splM
^{p,q}_{(\omega )}$ and $\splW ^{p,q}=\splW ^{p,q}_{(\omega )}$. We
also let $\mathcal M^{p,q} _{(\omega )} (\rr {2d})$ be the
completion of $\maclS _s(\rr {2d})$ with
respect to the norm $\nm \cdo {\splM ^{p,q}_{(\omega )}}$.

\par

In the following proposition we list some basic facts on invariance, growth
and duality for modulation spaces. Recall that $p,p'\in[1,\infty]$ satisfy
$1/p+1/p'=1$. 
Since our main results are formulated in terms of
symplectic modulation spaces, we state the result for them
instead of the modulation spaces $M ^{p,q}_{(\omega )}(\rr {d})$.

\par

\begin{prop}\label{p1.4}
Let $p,q,p_j,q_j\in [1,\infty ]$ for $j=1,2$, and $\omega
,\omega _1,\omega _2,v\in \mascP _E (\rr {4d})$ be such that
$v=\check v$, $\omega$ is $v$-moderate and $\omega _2\lesssim
\omega _1$. Then the following is true:
\begin{enumerate}
\item[{\rm{(1)}}] $a\in \splM ^{p,q}_{(\omega )}(\rr {2d})$ if and only if
\eqref{modnorm1} holds for any $\phi \in \splM ^1_{(v)}(\rr {2d})\setminus
0$. Moreover, $\splM ^{p,q}_{(\omega )}$ is a Banach space under the norm
in \eqref{modnorm1} and different choices of $\phi$ give rise to
equivalent norms;

\vrum

\item[{\rm{(2)}}] if  $p_1\le p_2$ and $q_1\le q_2$  then
$$
\Sigma _1 (\rr {2d})\subseteq \splM ^{p_1,q_1}_{(\omega _1)}(\rr
{2d})\subseteq \splM ^{p_2,q_2}_{(\omega _2)}(\rr {2d})\subseteq
\Sigma _1 '(\rr {2d}).
$$

\vrum

\item[{\rm{(3)}}] the $L^2$ inner product $( \cdo ,\cdo )_{L^2}$ on $\maclS _{1/2}$
extends uniquely to a continuous sesquilinear form $( \cdo ,\cdo )$
on $\splM ^{p,q}_{(\omega )}(\rr {2d})\times \splM ^{p'\! ,q'}_{(1/\omega )}(\rr {2d})$.
On the other hand, if $\nmm a = \sup \abp {(a,b)}$, where the supremum is
taken over all $b\in \maclS _{1/2} (\rr {2d})$ such that
$\nm b{\splM ^{p',q'}_{(1/\omega )}}\le 1$, then $\nmm {\cdot}$ and $\nm
\cdot {\splM ^{p,q}_{(\omega )}}$ are equivalent norms;

\vrum

\item[{\rm{(4)}}] if $p,q<\infty$, then $\maclS _{1/2} (\rr {2d})$ is dense in
$\splM ^{p,q}_{(\omega )}(\rr {2d})$ and the dual space of $\splM
^{p,q}_{(\omega )}(\rr {2d})$ can be identified
with $\splM ^{p'\! ,q'}_{(1/\omega )}(\rr {2d})$, through the form
$(\cdo  ,\cdo )$. Moreover, $\maclS _{1/2} (\rr {2d})$ is weakly dense
in $\splM ^{p' ,q'}_{(\omega )}(\rr {2d})$ with respect to the form $(\cdo  ,\cdo )$
provided $(p,q) \neq (\infty,1)$ and $(p,q) \neq (1,\infty)$;

\vrum

\item[{\rm{(5)}}] if $p,q,r,s,u,v \in [1,\infty]$, $0\le \theta \le 1$,
\begin{equation*}
\frac1p = \frac {1-\theta }{r}+\frac {\theta}{u} \quad
\text{and} \quad \frac 1q = \frac {1-\theta }{s}+\frac {\theta}{v},
\end{equation*}
then complex interpolation gives
\begin{equation*}
(\maclM ^{r,s}_{(\omega )},\maclM ^{u,v}_{(\omega )})_{[\theta ]}
= \maclM ^{p,q}_{(\omega )}.
\end{equation*}
\end{enumerate}
Similar facts hold if the $\splM ^{p,q}_{(\omega )}$ spaces are replaced by
the $\splW ^{p,q}_{(\omega )}$ spaces.
\end{prop}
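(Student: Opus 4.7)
\medskip

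\noindent\textbf{Proof plan.} The unifying idea is to reduce every assertion to the corresponding statement about weighted mixed-norm Lebesgue spaces $L^{p,q}_{(\omega )}(\rr {4d})$, via the isometric embedding $a\mapsto \maclV _\Phi a$. By the identity \eqref{stftcompare} relating $\maclV _\Phi a$ to $V_\Phi a$ on $\rr {4d}$, it suffices to treat the ordinary modulation spaces $M^{p,q}_{(\omega _0)}(\rr {2d})$ and then transport the results to $\splM ^{p,q}_{(\omega )}(\rr {2d})$; the analogous facts for $\splW ^{p,q}_{(\omega )}$ follow either directly or from \eqref{twistfourmod}. Throughout, the key analytic tool is the pointwise STFT reproducing inequality
\begin{equation*}
|V_{\phi _1}f(X)| \lesssim \big ( |V_{\phi _2}f|\ast |V_{\phi _1}\phi _2| \big )(X),
\end{equation*}
valid whenever $(\phi _2,\phi _2)\neq 0$, together with the inversion formula $f = (\phi ,\phi )^{-1}\int V_\phi f(X) \pi (X)\phi \, dX$, where $\pi (X)$ is the usual time-frequency shift.

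\medskip

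First I would establish (1). Using the above pointwise bound, and inserting the weight via $v$-moderateness $\omega (X)\lesssim \omega (X-Y) v(Y)$, a weighted Young inequality on mixed norms shows $\nm f{M^{p,q}_{(\omega _0)}}$ is finite with window $\phi _1$ iff it is finite with any other window $\phi _2\in M^1_{(v)}\setminus 0$, and that any two such windows yield equivalent norms. Completeness is then the completeness of the weighted mixed-norm Lebesgue space pulled back through the isometric inclusion $V_\phi $. For (2) the embeddings reduce to a Hölder-type argument: as $\Sigma _1(\rr {2d})\subset M^1_{(v)}$ for every polynomially growing $v$, one has $\Sigma _1\subset \splM ^{p,q}_{(\omega )}\subset \Sigma _1'$, and monotonicity $p_1\le p_2$, $q_1\le q_2$, $\omega _2\lesssim \omega _1$ follows from the corresponding monotonicity on the Lebesgue level after observing that $V_\Phi a$ has controlled decay on the Gelfand--Shilov test space side and controlled growth on the dual side.

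\medskip

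For (3) and (4) the pairing $(a,b) = (\phi ,\phi )^{-1}(\maclV _\Phi a,\maclV _\Phi b)_{L^2(\rr {4d})}$ (Moyal's identity) combined with Hölder's inequality in $L^{p,q}_{(\omega )}\times L^{p',q'}_{(1/\omega )}$ extends the $L^2$ inner product and gives the estimate defining $\nmm a$, while the reconstruction formula yields the converse bound by dualizing on the Lebesgue side. Density of $\maclS _{1/2}$ in $\splM ^{p,q}_{(\omega )}$ for $p,q<\infty $ follows from density of compactly supported continuous functions in $L^{p,q}_{(\omega )}$, transferred via reconstruction; the weak-$\ast$ density for the endpoint cases $(p,q)\neq (\infty ,1),(1,\infty )$ is obtained in the same way but using weak-$\ast$ density on the Lebesgue side. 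Identification of the dual then comes from Hahn--Banach applied to $V_\Phi(\splM ^{p,q}_{(\omega )})$ viewed as a closed subspace of $L^{p,q}_{(\omega )}$. Finally, (5) is bilinear complex interpolation of weighted mixed-norm Lebesgue spaces (Stein--Weiss / Bergh--L\"ofstr\"om), transferred through the bounded projection $V_\Phi \circ (V_\Phi )^{\ast }$ onto $V_\Phi (\maclM ^{p,q}_{(\omega )})$, which commutes with the interpolation functor.

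\medskip

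The genuinely delicate step is verifying that all of these classical arguments, which are usually carried out in the Schwartz setting, still work when test functions are taken from $\maclS _{1/2}$ and weights from $\mascP _E$; in particular one must justify that $\Sigma _1(\rr {2d})\subset M^1_{(v)}(\rr {2d})$ for \emph{every} $v\in \mascP _E$ (exponentially moderate weights), so that window-invariance and the dual pairing are not sensitive to the choice of window within $M^1_{(v)}\setminus 0$. Once this is in place, the rest amounts to well-known weighted Young, Hölder and interpolation inequalities on $L^{p,q}_{(\omega )}(\rr {4d})$, as documented in the references cited before the proposition.
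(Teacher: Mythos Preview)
The paper does not give its own proof of this proposition; immediately before the statement it says ``The proof of the following proposition can be found in \cite{Cordero,Feichtinger1,Feichtinger2,Feichtinger3,Feichtinger4,Feichtinger5,Grochenig2,Toft2,Toft4,Toft5,Toft8}'' and no argument is supplied afterwards. Your sketch follows exactly the standard route taken in those references (most transparently in Gr{\"o}chenig's book and in Toft's papers): reduce to weighted mixed-norm Lebesgue spaces via the isometry $a\mapsto \maclV _\Phi a$, use the pointwise convolution inequality for the STFT together with $v$-moderateness for window independence, Moyal's identity plus H{\"o}lder for the pairing and duality, and the retract argument through $V_\Phi$ for complex interpolation. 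So there is nothing to compare --- your plan \emph{is} the literature proof.

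Two small corrections to your write-up. In (2) you first say ``$\Sigma _1\subset M^1_{(v)}$ for every polynomially growing $v$'', but the weights here lie in $\mascP _E$ and may grow like $e^{c|\cdot |}$; you catch this yourself in the last paragraph, and the inclusion is still true because elements of $\Sigma _1$ have STFT decaying like $e^{-r|\cdot |}$ for \emph{every} $r>0$ (cf.\ Remark \ref{remGSmodident}), so just fix the earlier sentence. Also, calling the monotonicity $\splM ^{p_1,q_1}_{(\omega _1)}\subseteq \splM ^{p_2,q_2}_{(\omega _2)}$ a ``H{\"o}lder-type argument'' is misleading: H{\"o}lder on a space of infinite measure goes the wrong way. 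The actual mechanism is again the convolution inequality $|V_\phi a|\le |V_\psi a|*|V_\phi \psi |$ combined with Young's inequality $L^{p_1,q_1}_{(\omega )}*L^1_{(v)}\subseteq L^{p_2,q_2}_{(\omega )}$ for $p_1\le p_2$, $q_1\le q_2$ (using that $|V_\phi \psi |\in L^1_{(v)}\cap L^\infty _{(v)}$); equivalently one discretizes via Gabor frames and uses the nesting $\ell ^{p_1,q_1}\subseteq \ell ^{p_2,q_2}$.
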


\par

The proof of Proposition \ref{p1.4}
can be found in \cite {Cordero,Feichtinger1,Feichtinger2,
Feichtinger3, Feichtinger4, Feichtinger5, Grochenig2, Toft2,
Toft4, Toft5, Toft8}.

\par

In fact, (1) follows from Gr{\"o}chenig's argument verbatim in
\cite[Proposition 11.3.2 (c)]{Grochenig2}. Note that the window
class $\splM ^1_{(v)}(\rr {2d})$ in (1) contains $\Sigma _1(\rr {2d})$,
which in turn contains $\maclS _{1/2}(\rr {2d})$. Furthermore, if in
addition $v\in \mascP (\rr {4d})$,
then $\splM ^1_{(v)}(\rr {2d})$ contains $\mascS (\rr {2d})$.

\par

The proof of (2) in \cite[Chapter 12]{Grochenig2} is based on Gabor
frames and formulated for polynomial type weights $\mascP
(\rr {4d})$. These arguments also hold for the broader weight class $\mascP
_E(\rr {4d})$. Another way to prove this is by means of
\cite[Lemma 11.3.3]{Grochenig2} and Young's inequality.

\par

The assertions (3)--(5) in Proposition \ref{p1.4} can be found for
more general weights in Theorem 4.17, and a combination of
Theorem 3.4 and Proposition 5.2  in \cite{Toft8}.

\par

\begin{rem}\label{remGSmodident}
Let $\mathcal P$ be the set of all
$\omega \in \mascP _E(\rr {4d})$ such that
$$
\omega (X,Y ) = e^{c(|X|^{1/s}+|Y|^{1/s})},
$$
for some $c>0$. (Note that this implies that $s\ge 1$.) Then
\begin{alignat*}{2}
\bigcap _{\omega \in \mathcal P}\splM ^{p,q}_{(\omega )}(\rr {2d}) &=
\Sigma _s(\rr {2d}),
&\quad \phantom{\text{and}}\quad
\bigcup _{\omega \in \mathcal P}\splM ^{p,q}_{(1/\omega )}(\rr {2d}) &=
\Sigma _s'(\rr {2d})
\\[1ex]
\bigcup _{\omega \in \mathcal P}\splM ^{p,q}_{(\omega )}(\rr {2d}) &=
\maclS _s(\rr {2d}),
&
\bigcap _{\omega \in \mathcal P}\splM ^{p,q}_{(1/\omega )}(\rr {2d}) &=
\maclS _s'(\rr {2d}),
\end{alignat*}
and for $\omega \in \mathcal P$
$$
\Sigma _s(\rr {2d})\subseteq \splM ^{p,q}_{(\omega )}(\rr {2d})
\subseteq
\maclS _s(\rr {2d}) \quad \text{and}\quad
\maclS _s'(\rr {2d}) \subseteq \splM ^{p,q}_{(1/\omega )}(\rr {2d}) \subseteq
\Sigma _s'(\rr {2d}).
$$
(Cf. \cite[Prop.~4.5]{CPRT10}, \cite[Prop.~4]{GZ}, \cite[Cor.~5.2]{Pilipovic1} and
\cite[Thm.~4.1]{Teo2}. See also \cite[Thm.~3.9]{Toft8} for an extension of these
inclusions to broader classes of Gelfand--Shilov and modulation spaces.)
\end{rem}

\par

We have the following result for the map $e^{it\scal {D_x}{D_\xi}}$ in
\eqref{calculitransform} when the domains are modulation spaces. We refer
to \cite[Proposition 1.7]{Toft5} for the proof (see also
\cite[Proposition 6.14]{Toft8}).

\par

\begin{prop}\label{propCalculiTransfMod}
Let $\omega _0\in \mascP _E(\rr
{4d})$, $p,q\in [1,\infty ]$, $t,t_1,t_2\in \mathbf R$, and set
$$
\omega _t(x,\xi ,\eta ,y)= \omega _0(x+ty,\xi +t\eta ,\eta ,y).
$$
The map $e^{it\scal {D_x}{D_\xi}}$ on $\maclS _{1/2}'(\rr {2d})$
restricts to a homeomorphism from $M^{p,q}_{(\omega
_0)}(\rr {2d})$ to $M^{p,q}_{(\omega _t)}(\rr {2d})$.

\par

In particular, if $a_1,a_2\in \maclS _{1/2}'(\rr {2d})$ satisfy
\eqref{calculitransform}, then $a_1\in
M^{p,q}_{(\omega _{t_1})}(\rr {2d})$, if and only if $a_2\in
M^{p,q}_{(\omega _{t_2})}(\rr {2d})$.
\end{prop}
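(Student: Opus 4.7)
The plan is to derive an STFT transfer identity expressing $V_\Phi(T_t a)$, where $T_t := e^{it\scal{D_x}{D_\xi}}$, as a chirp-modulated version of $V_{\Phi'}(a)$ for a related window $\Phi'$, evaluated after a linear change of variables on $\rr{4d}$; the modulation space norm is then transferred via a substitution. As noted after \eqref{calculitransform}, $T_t$ is a Fourier multiplier on $\rr{2d}$ with symbol $e^{it\zeta_1\zeta_2}$ in the dual variable $\zeta$, hence a homeomorphism on $\maclS_{1/2}(\rr{2d})$ and on $\maclS_{1/2}'(\rr{2d})$ with inverse $T_{-t}$. Fix a window $\Phi\in\maclS_{1/2}(\rr{2d})\setminus 0$ and let $\Phi'\in\maclS_{1/2}(\rr{2d})\setminus 0$ be the companion window obtained from $\Phi$ by the corresponding chirp multiplier.

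The central step is the STFT identity: writing $X=(x,\xi)$, $Y=(\eta,y)\in\rr{2d}$ and setting $L_tY:=(ty,t\eta)$, there is a real quadratic phase $\varphi_t:\rr{4d}\to\mathbf R$ (independent of $a$) such that
\begin{equation*}
V_\Phi(T_t a)(X,Y) = e^{i\varphi_t(X,Y)}\,V_{\Phi'}(a)(X-L_tY,\,Y),\qquad a\in\maclS_{1/2}'(\rr{2d}).
\end{equation*}
I would derive this by applying Parseval to rewrite $V_\Phi(T_ta)(X,Y)$ as a pairing of $\widehat{T_ta}$ with $\widehat{M_YT_X\Phi}$ in the dual variable $\zeta$, inserting the chirp symbol of $T_t$, substituting $\zeta\mapsto\zeta-Y$, and expanding $e^{it\zeta_1\zeta_2}$ into three factors: one depending only on $\zeta-Y$ (absorbed into the window, producing $\Phi\mapsto\Phi'$), one bilinear in $\zeta-Y$ and $Y$ (producing the translation $X\mapsto X-L_tY$), and one depending only on $Y$ (entering $\varphi_t$). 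This is the main technical step, entirely mechanical but sign-sensitive.

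Given the identity, moduli annihilate $e^{i\varphi_t}$, and the unit-Jacobian substitution $X'=X-L_tY$ at fixed $Y$ in the defining integral \eqref{modnorm1} yields
\begin{equation*}
\nm{T_t a}{M^{p,q}_{(\omega_t)}} = \Bigl(\int\Bigl(\int|V_{\Phi'}(a)(X',Y)|^p\,\omega_t(X'+L_tY,Y)^p\,dX'\Bigr)^{q/p}dY\Bigr)^{1/q}.
\end{equation*}
Unwinding $\omega_t(x,\xi,\eta,y)=\omega_0(x-ty,\xi-t\eta,\eta,y)$ gives the crucial identity $\omega_t(X'+L_tY,Y)=\omega_0(X',Y)$, so the right-hand side equals the $M^{p,q}_{(\omega_0)}$-norm of $a$ computed with window $\Phi'$. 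By Proposition~\ref{p1.4}(1), $\Phi'\in\maclS_{1/2}(\rr{2d})\subseteq\splM^1_{(v)}(\rr{2d})$ yields an equivalent norm; thus $\nm{T_ta}{M^{p,q}_{(\omega_t)}}\asymp\nm{a}{M^{p,q}_{(\omega_0)}}$. Continuity of $T_t:M^{p,q}_{(\omega_0)}(\rr{2d})\to M^{p,q}_{(\omega_t)}(\rr{2d})$ follows, and the same argument applied to $T_{-t}$ gives the inverse continuity, hence the homeomorphism.

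The final assertion is then immediate: by \eqref{calculitransform}, $a_2=T_{t_2-t_1}a_1$, and the first part applied with $\omega_0$ replaced by $\omega_{t_1}\in\mascP_E(\rr{4d})$ and $t$ by $t_2-t_1$ yields $a_1\in M^{p,q}_{(\omega_{t_1})}$ if and only if $a_2\in M^{p,q}_{(\omega_{t_2})}$, since $(\omega_{t_1})_{t_2-t_1}=\omega_{t_2}$ follows by direct substitution in the definition of the shifted weight.
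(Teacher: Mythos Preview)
The paper does not give its own proof of this proposition; it simply cites \cite[Proposition 1.7]{Toft5} and \cite[Proposition 6.14]{Toft8}. Your argument---an STFT transfer identity of the form $|V_\Phi(T_ta)(X,Y)|=|V_{\Phi'}a(X-L_tY,Y)|$ followed by a unit-Jacobian substitution in the inner integral and invocation of Proposition~\ref{p1.4}(1) for window independence---is exactly the standard route taken in those references, and the computation you outline is correct (in particular the cocycle identity $(\omega_{t_1})_{t_2-t_1}=\omega_{t_2}$ holds by direct substitution, and $\Phi'\in\maclS_{1/2}(\rr{2d})\subseteq M^1_{(v)}(\rr{2d})$ for any $v\in\mascP_E$ moderating $\omega_0$, so the norm-equivalence step is justified). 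There is nothing to compare beyond noting that you have supplied what the paper omits.
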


\par

(Note that
in the equality of (2) in \cite[Proposition 6.14]{Toft8},
$y$ and $\eta$ should be interchanged in the last two arguments
in $\omega _0$.)

\medspace

By Proposition \ref{p1.4} (4) we have norm density of $\maclS _{1/2}$ in
$\splM ^{p,q}_{(\omega )}$ when $p,q<\infty$. We may relax the assumptions
on $p$, provided we replace the norm convergence with \emph{narrow}
convergence.
This concept, that allows us to approximate elements in $\splM ^{\infty,q}
_{(\omega )}(\rr {2d})$ for $1 \leq q < \infty$, 
is treated in \cite{Sjo1,Toft2,Toft4}, and, for
the current setup of possibly exponential weights, in \cite{Toft8}.
(Sj{\"o}strand's original definition in \cite{Sjo1} is somewhat different.)
Narrow convergence is defined by means of the function
$$
H_{a,\omega ,p}(Y) \equiv \| \maclV _\Phi a(\cdot,Y)\omega (\cdot,Y) \|
_{L^p(\rr {2d})}, \quad Y \in \rr {2d},
$$
for $a \in \maclS _{1/2}'(\rr {2d})$, $\omega \in \mascP _E(\rr {4d})$,
$\Phi \in \maclS _{1/2}(\rr {2d}) \setminus 0$ and $p\in [1,\infty]$.

\par

\begin{defn}\label{p2.1}
Let $p,q\in [1,\infty
]$, and $a,a_j\in
\splM ^{p,q}_{(\omega )}(\rr {2d})$, $j=1,2,\dots \ $. Then $a_j$
is said to \emph{converge narrowly} to $a$ with respect to $p,q$, $\Phi \in
\maclS _{1/2}(\rr {2d})\setminus 0$ and $\omega \in \mascP
_E(\rr {4d})$, if there exist $g_j,g \in L^q(\rr {2d})$ such that:

\begin{enumerate}
\item[{\rm{(1)}}] $a_j\to a$ in $\maclS _{1/2}'(\rr {2d})$ as $j\to \infty$;

\par

\item[{\rm{(2)}}] $H_{a_j,\omega ,p} \le g_j$ and $g_j \to g$ in $L^q(\rr
{2d})$ and a.{\,}e. as $j\to \infty$.
\end{enumerate}
\end{defn}

\par

\begin{prop}\label{narrowprop}
If $\omega \in \mascP _E(\rr {4d})$ and $1 \le q < \infty$ then the following
is true: 
\begin{enumerate}
\item[{\rm{(1)}}] $\maclS _{1/2}(\rr {2d})$ is
dense in $\splM ^{\infty,q}_{(\omega )}(\rr {2d})$ with respect to narrow
convergence;
\item[{\rm{(2)}}] $\splM ^{\infty,q}_{(\omega )}(\rr {2d})$ is sequentially
complete with respect to the topology defined by narrow convergence. 
\end{enumerate}
\end{prop}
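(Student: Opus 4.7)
My approach to part (1) is to truncate the symplectic STFT of $a$ and reconstruct via the inversion formula. Fix $\Phi \in \maclS _{1/2} (\rr {2d})$ with $\nm \Phi{L^2} = 1$, and a submultiplicative $v\in \mascP _E (\rr {4d})$ such that $\omega $ is $v$-moderate. Choose a Gaussian-type cutoff $\chi \in \maclS _{1/2} (\rr {4d})$ with $\chi (0)=1$ and $|\chi |\le 1$, and set $\chi _j (X,Y) = \chi (X/j, Y/j)$. Define $a_j$ as the weak reconstruction obtained by inserting $\chi _j \maclV _\Phi a$ in place of $\maclV _\Phi a$ in the symplectic inversion formula.

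Three points then need attention. First, $a_j \in \maclS _{1/2} (\rr {2d})$: since $\maclV _\Phi a$ grows at most sub-exponentially (because $a$ lies in a modulation space with weight $\omega \in \mascP _E$) while $\chi _j$ has Gaussian decay, the product $\chi _j \maclV _\Phi a$ lies in a symplectic modulation space with weight of the form $e^{c(|X|^{2}+|Y|^{2})}$, and Remark \ref{remGSmodident} then places $a_j$ in $\maclS _{1/2} (\rr {2d})$. Second, $a_j \to a$ in $\maclS _{1/2} '(\rr {2d})$: pairing $a - a_j$ with $\varphi \in \maclS _{1/2}$ and using a Moyal-type identity reduces the difference to $\iint (1-\chi _j )\maclV _\Phi a \cdot \overline{\maclV _\Phi \overline \varphi }\, dX\, dY$, which vanishes by dominated convergence since $\maclV _\Phi \overline \varphi$ decays faster than any exponential, while $\maclV _\Phi a$ has only sub-exponential growth. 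Third, the domination: the reproducing formula combined with the $v$-moderation of $\omega$ yields
\begin{equation*}
H_{a_j, \omega , \infty }(Y') \lesssim (K\ast H_{a, \omega ,\infty })(Y') =: g(Y'),
\end{equation*}
where $K(Y) = \sup _{X'}\int _{\rr {2d}} |\maclV _\Phi \Phi (X'-X, Y)|\,v(X'-X, Y)\, dX$ lies in $L^1(\rr {2d})$ because $\maclV _\Phi \Phi \in \maclS _{1/2}(\rr {4d})$ absorbs the sub-exponential $v$. Young's inequality then gives $g\in L^q(\rr {2d})$, and choosing the constant dominating sequence $g_j \equiv g$ fulfills the conditions of Definition \ref{p2.1}.

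Part (2) reduces to Fatou's lemma. Given $(a_j)\subset \splM ^{\infty ,q}_{(\omega )}(\rr {2d})$ with $a_j \to a$ in $\maclS _{1/2}'(\rr {2d})$ and $H_{a_j, \omega ,\infty } \le g_j$ where $g_j \to g$ in $L^q$ and a.e., the continuity of the symplectic STFT as a pairing against fixed Gelfand--Shilov windows gives $\maclV _\Phi a_j (X,Y) \to \maclV _\Phi a (X,Y)$ pointwise for every $(X,Y)$. Fatou's lemma in the $X$ variable then yields $H_{a, \omega ,\infty }(Y) \le \liminf _j H_{a_j, \omega ,\infty }(Y) \le g(Y)$ a.e., whence $\nm a{\splM ^{\infty ,q}_{(\omega )}} \le \nm g{L^q} < \infty$ and $a \in \splM ^{\infty ,q}_{(\omega )}(\rr {2d})$.

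The main technical hurdle is the regularity step of part (1): showing $a_j \in \maclS _{1/2}(\rr {2d})$ requires a careful interplay between the Gaussian decay of the cutoff $\chi _j$ and the potentially exponential growth of both $\maclV _\Phi a$ and $1/\omega$ allowed by the moderate-weight setting. The sharp characterization of $\maclS _{1/2}$ in Remark \ref{remGSmodident} via symplectic modulation spaces with exponential-type weights is precisely what allows the reconstructed $a_j$ to remain in $\maclS _{1/2}$ rather than landing only in some larger space $\maclS _s$ with $s > 1/2$.
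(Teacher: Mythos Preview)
Your proof is correct. For part (2) your Fatou-lemma argument coincides with the paper's, modulo the paper explicitly starting from a narrow Cauchy sequence and first invoking completeness of $\maclS _{1/2}'(\rr{2d})$ and of $L^q(\rr{2d})$ to manufacture the limits $a$ and $g$ that you take as given; this is an easy addition. For part (1) the approaches differ: the paper simply cites \cite[Theorem 4.19]{Toft8} without detail, whereas you give a self-contained STFT-truncation construction. Your key regularity step is sound: $a \in \splM ^{\infty,q}_{(\omega)} \subseteq \Sigma _1'(\rr{2d})$ by Proposition \ref{p1.4}(2), so Remark \ref{remGSmodident} with $s=1$ forces $|\maclV _\Phi a(X,Y)| \lesssim e^{c(|X|+|Y|)}$, and any Gaussian cutoff $\chi _j \in \maclS _{1/2}(\rr{4d})$ beats this growth for every $j$, placing the reconstruction $a_j$ in $\maclS _{1/2}(\rr{2d})$ via Remark \ref{remGSmodident} with $s=1/2$. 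The uniform domination $H_{a_j,\omega,\infty} \lesssim K * H_{a,\omega,\infty} \in L^q$ is the standard reproducing-kernel estimate, and the constant dominating sequence $g_j \equiv K * H_{a,\omega,\infty}$ trivially satisfies Definition \ref{p2.1}(2). Your construction thus makes the density statement self-contained where the paper outsources it, at no extra cost.
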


\begin{proof}
Assertion (1) is a consequence of \cite[Definition 2.12 and Theorem 4.19]{Toft8}.

\par

To prove (2), let $\{a_n\}_{n=1}^\infty \subseteq  \maclS _{1/2}'(\rr {2d})$
be a Cauchy sequence with respect to narrow convergence. 
This means that 
\begin{equation}\label{cauchy1}
(a_n-a_k,\fy) \to 0, \quad n,k \to \infty, \quad \fy \in \maclS _{1/2}(\rr {2d}), 
\end{equation}
and there exists a sequence $\{g_n\} \subseteq L^q(\rr {2d})$ such that
$H_{a_n,\omega,\infty} \le g_n$,  and $\| g_n - g_k \|_{L^q} \to 0$ as
well as $g_n - g_k \to 0$ a.{\,}e.,  as $n,k \to \infty$.
By \eqref{cauchy1} and the completeness of $\maclS _{1/2}'(\rr {2d})$
there exists $a \in \maclS _{1/2}'(\rr {2d})$ such that $a_n \to a$ in
$\maclS _{1/2}'(\rr {2d})$ as $n \to \infty$, and by the completeness
of $L^q(\rr {2d})$ there exists $g \in L^q(\rr {2d})$ such that $g_n \to
g$ in $L^q(\rr {2d})$ and a.{\,}e. as $n \to \infty$. 
This shows that conditions (1) and (2) of Definition \ref{p2.1} are satisfied. 

\par

To show $a_n \to a$ narrowly as $n \to \infty$
it remains to prove $a \in \splM ^{\infty,q}_{(\omega )}(\rr {2d})$. 
We have for $Y \in \rr {2d}$
\begin{multline*}
H_{a,\omega,\infty} (Y) 
= \| \lim_{n \to \infty} \maclV _\Phi a_{n} (\cdot,Y)\omega (\cdot,Y)
\|_{L^\infty}
\\[1ex]
\le \limsup_{n \to \infty} \| \maclV _\Phi a_{n} (\cdot,Y)\omega (\cdot,Y)
\|_{L^\infty}
\\[1ex]
\le \| \limsup_{n \to \infty} |\maclV _\Phi a_{n} (\cdot,Y) | \omega (\cdot,Y)
\|_{L^\infty}
= H_{a,\omega,\infty} (Y). 
\end{multline*}
Since 
\begin{equation*}
H_{a,\omega,\infty} (Y) 
\le \liminf_{n \to \infty} H_{a_{n},\omega,\infty} (Y), \quad Y \in \rr {2d}, 
\end{equation*}
the limit $\lim_{n \to \infty} H_{a_{n},\omega,\infty} (Y)$ exists, 
so for almost all $Y \in \rr {2d}$ it follows that
\begin{equation*}
H_{a,\omega,\infty} (Y) =  \lim_{n \to \infty} H_{a_{n},\omega,\infty} (Y) 
\le \limsup_{n \to \infty} g_{n} (Y)
= g(Y). 
\end{equation*}
Since $g \in L^q(\rr {2d})$ we conclude that $H_{a,\omega,\infty} \in
L^q(\rr {2d})$ which means that $a \in \splM ^{\infty,q}_{(\omega )}(\rr {2d})$. 
\end{proof}

\par

\section{Continuity for the Weyl product on
modulation spaces}\label{sec2}

\par

In this section we deduce results on sufficient conditions for
continuity of the Weyl product on
modulation spaces, and the twisted convolution on Wiener amalgam spaces. 
The main results are Theorems
\ref{thm0.3}$'$ and \ref{thm0.4} concerning the Weyl product,
and Theorem \ref{thm0.3TwistConv} concerning the twisted
convolution.

\par

The first main result Theorem \ref{thm0.3}$'$ together with
Theorem \ref{thm0.4} is equivalent to Theorem \ref{thm0.3TwistConv}.
In the bilinear case, Theorem \ref{thm0.3}$'$ is the same as
Theorem \ref{thm0.3} in the introduction, and contains
\cite[Theorem 0.3$'$]{HTW} and Theorem \ref{thm0.4}.
On the other hand, in the multi-linear case with $N>2$, Theorems
\ref{thm0.3}$'$ and \ref{thm0.4} are distinct results with none of
them included in the other. 

\par

When proving Theorem \ref{thm0.3}$'$ we first need norm estimates. 
Then we prove the uniqueness of the extension, where generally norm
approximation not suffices, since the test function space may fail
to be dense in several of the domain spaces. The situation is
saved by a comprehensive argument based on narrow convergence.
First we prove the important
special cases Propositions \ref{prop1} and \ref{prop2} and then
we state and prove Theorem \ref{thm0.3}$'$.

\par

For $N \ge 2$ we let $\masfR _N$ be the H{\"o}lder--Young exponent
function
\begin{equation}\tag*{(\ref{HYfunctional})$'$}
\begin{aligned}
\masfR _N(\mathbf p) &= ({N-1})^{-1}\left ({\sum _{j=0}^N\frac
1{p_j}-1}\right ),
\\[1ex]
\mathbf p &= (p_0,p_1,\dots ,p_N)\in [1,\infty ]^{N+1}, 
\end{aligned}
\end{equation}
and we consider mappings of the form
\begin{equation}\tag*{(\ref{Weylmap})$'$}
(a_1,\dots ,a_N)\mapsto a_1\wpr \cdots \wpr a_N.
\end{equation}

We first show a formula for the STFT
of $a_1\wpr \cdots \wpr a_N$ expressed with
\begin{equation}\label{Fjdef}
F_j(X,Y) = \maclV_{\Phi _j}a_j (X+Y,X-Y).
\end{equation}

\par

\begin{lemma}\label{prodlemma}
Let $\Phi _j \in \maclS _{1/2}(\rr {2d})$, $j=1,\dots ,N$, $a_k \in
\maclS _{1/2}'(\rr {2d})$ for some $1 \le k \le N$, and $a_j \in
\maclS _{1/2}(\rr {2d})$ for $j \in \{1,\dots ,N\} \setminus k$. 
Suppose
$$
\Phi _0 = \pi ^{(N-1)d}\Phi _1\wpr \cdots \wpr \Phi _N\quad
\text{and}\quad
a_0 = a_1\wpr \cdots \wpr a_N.
$$
If $F_j$ are given by \eqref{Fjdef} then
\begin{multline}\label{STFTintegral}
F_0(X_N,X_0)
\\[1ex]
=\idotsint _{\rr {2(N-1)d}}e^{2 i Q(X_0,\dots  ,X_N)}
\prod _{j=1}^NF_j(X_j,X_{j-1}) \, dX_1
\cdots dX_{N-1}
\end{multline}
with 
$$
Q(X_0,\dots,X_N)=\sum_{j=1}^{N-1}\sigma(X_j-X_0,X_{j+1}-X_0).
$$ 
\end{lemma}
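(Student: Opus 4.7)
My plan is to prove the lemma by induction on $N$. The case $N=1$ is trivial (the formula reduces to $F_0 = F_1$), and the case $N=2$ serves as the substantive base step, while the induction step exploits the associativity of the Weyl product.

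For the base case $N=2$, I would verify directly that
\begin{equation*}
F_0(X_2, X_0) = \int_{\rr {2d}} e^{2i\sigma(X_1 - X_0, X_2 - X_0)} F_1(X_1, X_0) F_2(X_2, X_1)\, dX_1
\end{equation*}
when $a_0 = a_1 \wpr a_2$ and $\Phi_0 = \pi^d \Phi_1 \wpr \Phi_2$. The approach is to substitute the Moyal integral formula for both Weyl products into the defining integral of $\maclV_{\Phi_0} a_0$ evaluated at $(X_2 + X_0, X_2 - X_0)$, using the identity $\overline{\Phi_1 \wpr \Phi_2} = \overline{\Phi_2} \wpr \overline{\Phi_1}$ (which follows from $\op ^w(\overline f) = \op ^w(f)^*$). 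After changes of variables centered at the Moyal arguments, the resulting integral factors so that two Fourier integrals assemble into the STFTs $F_1(X_1, X_0)$ and $F_2(X_2, X_1)$, with the remaining oscillatory factor collapsing precisely to $e^{2i\sigma(X_1 - X_0, X_2 - X_0)}$.

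For the induction step, assume the formula holds for $N-1$ factors. By associativity of $\wpr$, write $a_0 = b \wpr a_N$ and $\Phi_0 = \pi^d \Psi \wpr \Phi_N$, where $b = a_1 \wpr \cdots \wpr a_{N-1}$ and $\Psi = \pi^{(N-2)d} \Phi_1 \wpr \cdots \wpr \Phi_{N-1}$. Applying the base case to the splitting $(b, a_N; \Psi, \Phi_N)$ yields
\begin{equation*}
F_0(X_N, X_0) = \int e^{2i\sigma(X_{N-1} - X_0, X_N - X_0)} G(X_{N-1}, X_0) F_N(X_N, X_{N-1})\, dX_{N-1},
\end{equation*}
with $G(X, Y) = \maclV_\Psi b(X + Y, X - Y)$. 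The inductive hypothesis (applied with inner variable $X_0$ and outer variable $X_{N-1}$) expresses $G(X_{N-1}, X_0)$ as an $(N-2)$-fold integral of $\prod_{j=1}^{N-1} F_j(X_j, X_{j-1})$ weighted by $\exp\bigl(2i \sum_{j=1}^{N-2} \sigma(X_j - X_0, X_{j+1} - X_0)\bigr)$. Inserting this expression and applying Fubini gives the claimed formula, the extra phase $\sigma(X_{N-1}-X_0, X_N-X_0)$ acting as the final summand in $Q(X_0, \dots, X_N)$.

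The main obstacle is the bookkeeping in the base case $N=2$: one must expand both Weyl products via the Moyal formula, apply the identity for $\overline{\Phi_1 \wpr \Phi_2}$, and then carry out substitutions that correctly recombine the $10d$-dimensional integrand into a product of two STFTs times the single required exponential. Fubini's theorem and absolute convergence are unproblematic, since under the hypotheses exactly one factor $a_k$ is distributional while the remaining $a_j$ and all $\Phi_j$ lie in $\maclS _{1/2}(\rr {2d})$, providing the Schwartz decay needed to justify the interchange of integrals.
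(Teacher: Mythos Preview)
Your proposal is correct and follows essentially the same approach as the paper: establish the case $N=2$ (the paper cites \cite[Lemma~2.1]{HTW} after the change of variables $X=X_2+X_0$, $Y=X_2-X_0$, while you sketch the direct Moyal-formula verification) and then induct via the associative splitting $a_0=(a_1\wpr\cdots\wpr a_{N-1})\wpr a_N$, combining the $N=2$ phase with the inductive phase to recover $Q$. The only point worth tightening is the Fubini justification: rather than ``Schwartz decay,'' the precise mechanism is that each $F_j=\maclV_{\Phi_j}a_j(\cdot+\cdot,\cdot-\cdot)$ is a smooth function, with the $N-1$ factors coming from $a_j\in\maclS_{1/2}$ decaying fast enough to absorb the at-most-exponential growth of the single $F_k$ with $a_k\in\maclS_{1/2}'$.
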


\par

\begin{proof}
The result follows in the case $N=2$ by letting $X=X_2+X_0$ and
$Y=X_2-X_0$ in \cite[Lemma 2.1]{HTW}. For $N > 2$ the
result follows from straight-forward computations and induction.
\end{proof}

\par

Next we use the previous lemma to find sufficient conditions
for the extension of \eqref{Weylmap}$'$ to modulation spaces.
The integral representation of $V_{\Phi _0}a_0$ in the previous
lemma leads to the weight condition
\begin{multline}\tag*{(\ref{weightcond})$'$}
1 \lesssim \omega _0(X_N+X_0,X_N-X_0)\prod _{j=1}^N
\omega _j(X_j+X_{j-1},X_j-X_{j-1}),
\\[1ex]
X_0,X_1,\dots ,X_N\in \rr {2d}.
\end{multline}

\par

The following result is a generalization of \cite[Proposition 0.1]{HTW}.

\par

\begin{prop}\label{prop1}
Let $p_j,q_j\in [1,\infty ]$, $j=0,1,\dots , N$, and suppose
$$
\masfR _N(\mathbf q')\le 0\le \masfR _N(\mathbf p).
$$
Let $\omega _j$, $j=0,1,\dots ,N$, and suppose
\eqref{weightcond}$'$ holds. Then the map \eqref{Weylmap}$'$
from $\maclS _{1/2}(\rr {2d}) \times \cdots \times
\maclS _{1/2}(\rr {2d})$ to $\maclS _{1/2}(\rr {2d})$
extends uniquely to a continuous and
associative map from $\splM ^{p_1,q_1}_{(\omega _1)}(\rr {2d})
\times \cdots \times
\splM ^{p_N,q_N}_{(\omega _N)}(\rr {2d})$ to $\splM ^{p_0',q_0'}
_{(1/\omega _0)}(\rr {2d})$.
\end{prop}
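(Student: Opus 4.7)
The plan is to reduce the $N$-linear Weyl-product estimate to a mixed-norm multilinear integral inequality through the STFT formula of Lemma \ref{prodlemma}, and then to pass from test functions to the full modulation spaces using narrow convergence.

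First, I would establish the estimate on Gelfand--Shilov test functions via duality. By Proposition \ref{p1.4}(3), bounding $\nm{a_1\wpr \cdots \wpr a_N}{\splM^{p_0',q_0'}_{(1/\omega _0)}}$ amounts to estimating $|(a_1\wpr \cdots \wpr a_N,b)|$ uniformly in $b\in \maclS _{1/2}(\rr {2d})$ with $\nm{b}{\splM^{p_0,q_0}_{(\omega _0)}}\le 1$. Moving both sides of the pairing to the STFT side via Parseval's formula for the symplectic STFT and plugging in \eqref{STFTintegral} converts the task into bounding the $(N+1)$-fold integral
\begin{equation*}
I = \iint |G_0(X_N,X_0)|\prod _{j=1}^N |F_j(X_j,X_{j-1})|\,dX_0\cdots dX_N,
\end{equation*}
where each $F_j$ is a rescaling of $\maclV _{\Phi _j}a_j$ and $G_0$ is the corresponding rescaling of $\maclV _{\Phi _0}b$. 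The weight condition \eqref{weightcond}$'$ says precisely that $\omega _0\prod _{j=1}^N\omega _j$, evaluated on the arguments that appear in $F_0,F_1,\dots ,F_N$, is bounded below by a constant; one can therefore insert these weights at no cost and reduce to an unweighted estimate for $|F_j|\omega _j$ and $|G_0|\omega _0$.

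Second, I would prove the resulting multilinear bound
\begin{equation*}
I \lesssim \nm{|G_0|\omega _0}{L^{p_0,q_0}}\prod _{j=1}^N \nm{|F_j|\omega _j}{L^{p_j,q_j}}
\end{equation*}
under the hypotheses $\masfR _N(\mathbf p)\ge 0$ and $\masfR _N(\mathbf q')\le 0$. The bilinear case is \cite[Proposition 0.1]{HTW}, and the same bookkeeping extends to general $N$: iterated H\"older in the ``phase-space'' variables $X_j$ produces the condition $\sum _j 1/p_j\ge 1$, while iterated Young convolution in the ``frequency-like'' variables produces $\sum _j 1/q_j'\le 1$. Complex interpolation (Proposition \ref{p1.4}(5)) between the extremal configurations where each $(1/p_j,1/q_j)\in \{0,1\}^2$ further reduces the verification to a manageable set of endpoints.

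For the extension and uniqueness in \eqref{Weylmap}$'$: when every $p_j,q_j<\infty$, norm-density of $\maclS _{1/2}$ from Proposition \ref{p1.4}(4) immediately yields a unique continuous extension. To cover exponents equal to $\infty$, I would invoke narrow convergence: by Proposition \ref{narrowprop}(1) approximate each $a_j\in \splM^{p_j,q_j}_{(\omega _j)}$ narrowly by a Schwartz sequence $a_j^{(k)}$, and prove that the multilinear map is separately narrowly continuous by dominating the STFT of the product via \eqref{STFTintegral} through the $L^{q_j}$ majorants of Definition \ref{p2.1} and applying dominated convergence. Sequential narrow completeness of $\splM^{\infty ,q}_{(\omega )}$ (Proposition \ref{narrowprop}(2)) then produces a well-defined limit in the target space, independent of the approximating sequences, and associativity on $\maclS _{1/2}$ is inherited by continuous extension. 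The main obstacle I anticipate is the simultaneous narrow approximation in several factors when many of the $p_j$ equal $\infty$, because the STFT of the product depends nonlinearly on each $a_j$ through \eqref{STFTintegral}; the remedy is to fix the narrow majorants factor by factor and combine them through the multilinear inequality just established, which automatically supplies a uniform $L^{q_0}$ control and hence the required narrow continuity.
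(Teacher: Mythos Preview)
Your core strategy---insert the weights via \eqref{weightcond}$'$, feed the STFT identity \eqref{STFTintegral} into a mixed-norm H\"older/Young estimate, then extend via narrow convergence---is essentially the paper's. Two deviations are worth flagging. First, the paper does not pass through duality with a test function $b$ nor through complex interpolation between extremal exponents: after reducing to the equality case $\masfR _N(\mathbf p)=\masfR _N(\mathbf q')=0$ by embedding, it takes the $L^{p_0'}$ norm directly in the outer variable, uses Minkowski plus H\"older (exactly $\sum _j 1/p_j=1$) to collapse to a convolution $g_1*\cdots *g_N$, and finishes with Young (exactly $\sum _j 1/q_j'=1$). Your ``phase-space vs.\ frequency-like'' split does not quite match the actual variable structure of \eqref{STFTintegral}; the paper's change of variables $X_j\mapsto X_j+X_0$ is what produces the clean convolution. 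Your interpolation detour is unnecessary here (it \emph{is} used later for Proposition \ref{prop2}, not for this result).

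The genuine gap is associativity. ``Inherited by continuous extension'' only shows that the two ways of computing the full $N$-fold product agree once one already knows both sides make sense. The statement of the proposition also asserts that every \emph{subproduct} $a_{k_1}\wpr \cdots \wpr a_{k_2}$ is well defined as a distribution, and for that you must exhibit a modulation space containing it. This requires producing an intermediate weight $\vartheta \in \mascP _E(\rr {4d})$ such that $(\vartheta ^{-1},\omega _1,\dots ,\omega _k)$ and $(\omega _0,\vartheta ,\omega _{k+1},\dots ,\omega _N)$ each satisfy the analogue of \eqref{weightcond}$'$; the paper constructs $\vartheta$ explicitly as an infimum over the suppressed variables and checks it is moderate. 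Without this step your extension argument does not touch associativity at all.
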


\par

The associativity means that for any
product \eqref{Weylmap}$'$, where the factors $a_j$ satisfy the hypotheses, the subproduct
$$
a_{k_1}\wpr a_{k_1+1} \wpr  \cdots \wpr a_{k_2}
$$
is well defined as a distribution for any $1\le k_1 \le k_2\le N$, and
$$
a_1\wpr \cdots \wpr a_N = (a_1\wpr \cdots \wpr a_k)
\wpr (a_{k+1}\wpr \cdots \wpr a_N),
$$
for any $1\le k\le N-1$.
 
\par

To prove the uniqueness claim we need the following two lemmas, 
the first of which is a generalization of Lebesgue's dominated
convergence theorem.

\par

\begin{lemma}\label{GenLebThm}
Let $0< q < \infty$, let $\{ f_n \} _{n\ge 0}$ and $\{ g_n \} _{n\ge 0}$
be sequences in $L^q ( \rr d )$ such that
\begin{equation*}
\begin{aligned}
& \lim _{n\to \infty} \nm {g_n-g}{L^q(\rr d)} = 0,
\quad
\lim _{n\to \infty} g_n = g \quad a.{\,}e.,
\\[1ex]
& |f_n| \le g_n,
\quad \text{and}\quad
\lim _{n\to \infty}f_n =f \quad a.{\,}e.,
\end{aligned}
\end{equation*}
for some measurable functions $f$ and $g$. Then $f\in L^q (\rr d)$
and $$
\lim _{n\to \infty} \nm {f_{n}-f}{L^q (\rr d)} = 0.
$$
\end{lemma}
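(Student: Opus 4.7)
The plan is to prove Lemma \ref{GenLebThm} by an adaptation of the classical Fatou-lemma proof of Lebesgue's dominated convergence theorem, with the constant dominating function replaced by the variable sequence $\{g_n\}$ via the auxiliary sequence $g_n+g$.

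First, I would verify that $f\in L^q(\rr d)$. Passing to the a.{\,}e.{\,}limit in $|f_n|\le g_n$ gives $|f|\le g$ a.{\,}e., and since $g\in L^q(\rr d)$ this yields $f\in L^q(\rr d)$.

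Next, I would apply Fatou's lemma to the nonnegative sequence
\begin{equation*}
h_n \;=\; (g_n+g)^q - |f_n-f|^q,
\end{equation*}
which is nonnegative because $|f_n-f|\le |f_n|+|f|\le g_n+g$ pointwise a.{\,}e., so raising to the power $q>0$ preserves the inequality. Since $g_n\to g$ a.{\,}e.{\,}and $f_n\to f$ a.{\,}e., one has $h_n\to (2g)^q$ a.{\,}e., and Fatou gives
\begin{equation*}
\int _{\rr d}(2g)^q \,dx \;\le\; \liminf_{n\to\infty}\int_{\rr d} h_n\, dx.
\end{equation*}
Separately, the $L^q$-convergence $g_n\to g$ together with Minkowski's inequality (or its $q<1$ analogue on the functional $\|\cdot\|_{L^q}^q$, which is subadditive in that range) implies
\begin{equation*}
\lim_{n\to\infty}\int_{\rr d}(g_n+g)^q\,dx \;=\;\int_{\rr d}(2g)^q\,dx.
\end{equation*}
Subtracting this from the Fatou estimate leaves $\limsup_{n\to\infty}\int |f_n-f|^q \,dx\le 0$, which is the desired conclusion.

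The two minor obstacles are: (i)~the justification of $\int (g_n+g)^q\,dx\to\int (2g)^q\,dx$ when $0<q<1$, where Minkowski fails, but here one can use the elementary inequality $|a+b|^q\le |a|^q+|b|^q$ together with the generalized dominated convergence with the constant dominator $2^q g^q$, or alternatively invoke Brezis--Lieb once $L^q$-convergence of $g_n$ is known; and (ii)~the pointwise bound $|f_n-f|^q\le (g_n+g)^q$, which is immediate as noted. The main conceptual step is the choice of the auxiliary sequence $g_n+g$, which converts the variable-dominator situation into one to which Fatou's lemma applies cleanly.
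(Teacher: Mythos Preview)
Your proof is correct and follows exactly the same approach as the paper: the paper's proof is a one-line sketch indicating that the result follows from Fatou's lemma applied to the identity
\[
\int 2^q g(x)^q\,dx = \int \liminf_{n\to\infty}\big((g_n(x)+g(x))^q - |f_n(x)-f(x)|^q\big)\,dx,
\]
which is precisely your auxiliary sequence $h_n=(g_n+g)^q-|f_n-f|^q$. Your write-up is in fact more detailed than the paper's, including the verification that $\int(g_n+g)^q\to\int(2g)^q$ (which for all $q>0$ follows simply from $\|(g_n+g)-2g\|_{L^q}=\|g_n-g\|_{L^q}\to 0$ and continuity of the $L^q$-functional, so the case split you worry about in obstacle~(i) is not really needed).
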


\par

\begin{proof}
The result follows from an argument based on Fatou's lemma applied on
\begin{equation*}
\int 2^q g(x)^q \, dx 
= \int \liminf_{n \to \infty} \left( (g_{n}(x) + g(x) )^q -
| f_{n}(x)-f(x) |^q \right )\, dx. 
\end{equation*}
\end{proof}

\par

\begin{lemma}\label{intconv}
Let $1 < q \leq \infty$, let $f\in L^{q'}(\rr d)$,
and let $\{ g_n \} _{n\ge 0}$ be a sequence in $L^{q}(\rr d)$ 
such that
\begin{equation*}
\sup _{n} \nm {g_n}{L^q(\rr d)} < \infty
\quad \mbox{and} \quad
\lim _{n\to \infty} g_n = g \quad a.{\,}e. ,
\end{equation*}
for some measurable function $g$. Then $g\in L^{q} (\rr d)$
and 
$$
\lim _{n \to \infty} \int_{\rr d} (g_{n}(x)-g(x)) \, f(x) \, dx = 0.
$$
\end{lemma}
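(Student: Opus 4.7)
The plan is to first verify $g \in L^q(\rr d)$ and dispose of the easy case $q = \infty$ by direct dominated convergence, then handle the substantial case $1 < q < \infty$ with Vitali's convergence theorem applied to $h_n := (g_n - g) f$.

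Membership $g \in L^q(\rr d)$ is immediate: for $q < \infty$, Fatou's lemma applied to $|g_n|^q \to |g|^q$ a.{\,}e.\ gives $\nm g{L^q} \le M := \sup_n \nm{g_n}{L^q}$; for $q = \infty$, the essential bound $|g_n| \le M$ passes to the pointwise limit. When $q = \infty$ (so $q' = 1$), the conclusion follows immediately from Lebesgue's dominated convergence theorem applied to $(g_n - g) f$, which is dominated by $2M |f| \in L^1(\rr d)$ and converges to $0$ a.{\,}e.

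In the substantial case $1 < q < \infty$ (equivalently $1 \le q' < \infty$), one has $h_n \to 0$ a.{\,}e., and it suffices to prove $h_n \to 0$ in $L^1(\rr d)$. H{\"o}lder's inequality combined with $\nm{g_n - g}{L^q} \le 2M$ yields, for every measurable $A \subseteq \rr d$,
\[
\int_A |h_n|\, dx \le \nm{g_n - g}{L^q} \, \nm{f\, \mathbf{1}_A}{L^{q'}} \le 2M \, \nm{f\, \mathbf{1}_A}{L^{q'}}.
\]
The right-hand side tends to $0$ uniformly in $n$ both as $|A| \to 0$ (uniform integrability, by absolute continuity of the integral of $|f|^{q'} \in L^1(\rr d)$, which uses $q' < \infty$) and as $A = \{|x| > R\}$ with $R \to \infty$ (tightness). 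Vitali's convergence theorem then gives $\nm{h_n}{L^1} \to 0$, so that $\int h_n\, dx \to 0$.

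The main obstacle is this case $1 < q < \infty$: neither the a.{\,}e.\ convergence of $g_n$ nor the uniform $L^q$ bound alone supplies an $L^1$-dominating function for $h_n$, so Lebesgue's dominated convergence theorem (and Lemma~\ref{GenLebThm}) cannot be invoked directly. Vitali's theorem is the natural substitute, trading pointwise domination for uniform integrability and tightness; in our setting both of these are delivered by H{\"o}lder's inequality together with $|f|^{q'} \in L^1(\rr d)$, and the hypothesis $q > 1$ enters precisely through $q' < \infty$.
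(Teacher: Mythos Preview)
Your proof is correct. The argument via Vitali's convergence theorem is clean: Fatou gives $g\in L^q$, the case $q=\infty$ is plain dominated convergence, and for $1<q<\infty$ the H{\"o}lder estimate $\int_A|h_n|\le 2M\,\nm{f\,\mathbf 1_A}{L^{q'}}$ delivers both uniform integrability and tightness of $h_n=(g_n-g)f$ from the single fact $|f|^{q'}\in L^1$.

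The paper's own proof is essentially the same argument unpacked. Rather than invoking Vitali as a black box, it appeals directly to Egorov's theorem together with the two properties of $f$ that you also use: $\nm f{L^{q'}(\rr d\setminus B)}<\ep$ for $B$ a large ball, and $\nm f{L^{q'}(E)}\to 0$ as $|E|\to 0$. One restricts to a large ball (your tightness), applies Egorov there to get uniform convergence off a set of small measure, and controls that small set via the absolute continuity of $\nm f{L^{q'}(\cdot)}$ (your uniform integrability). Your route has the advantage of naming the mechanism (Vitali) rather than rebuilding it; the paper's route is slightly more self-contained. Substantively the two proofs coincide, and both rely on $q'<\infty$ at exactly the same point.
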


\par

\begin{proof}
The result follows from a combination of
Egorov's theorem and the facts that for any
$\ep >0$ there is a ball $B\subseteq \rr d$ such that
$$
\nm f{L^{q'}(\rr d\setminus B)}<\ep ,
$$
and
$$
\lim _{|E|\to 0}\nm f{L^{q'}(E)} = 0,
$$
where $|E|$ denotes the volume of the measurable set
$E\subseteq \rr d$. The details are left for the reader. 
\end{proof}

\par

\begin{proof}[Proof of Proposition \ref{prop1}]
By Proposition \ref{p1.4} (2) we may assume that $\masfR _N(\mathbf p)
=\masfR _N(\mathbf q')=0$, which will allow us to use H\"older's and
Young's inequalities.

\par

Let
$a_1,\dots ,a_N\in \maclS _{1/2}(\rr {2d})$. By replacing $X_j$ with
$X_j+X_0$ in \eqref{weightcond}$'$, $j=1,\dots ,N$, and then replacing
$2X_0$ with $X_0$, we get
\begin{multline}\label{weightcond1A}
1 \lesssim \omega _0(X_N+X_0,X_N)\omega _1(X_1+X_0,X_1)
\prod _{j=2}^N \omega _j(X_j+X_{j-1}+X_0,X_j-X_{j-1}),
\\[1ex]
X_0,\dots ,X_N\in \rr {2d}.
\end{multline}
Let $\Phi _j$, $j=0,\dots,N$, be as in Lemma \ref{prodlemma}. Set
\begin{align*}
G_j(X,Y) &\equiv | \maclV_{\Phi _j}a_j(X,Y)|\omega _j(X,Y),\qquad
g_j(Y)\equiv \nm {G_j(\cdo ,Y)}{L^{p_j}} ,
\intertext{for $j=1,\dots ,N$, and}
K(X_0,\dots ,X_N) &= G_1(X_1+X_0,X_1)
\prod _{j=2}^NG_j(X_j+X_{j-1}+X_0,X_j-X_{j-1}).
\end{align*}
Then Lemma \ref{prodlemma} gives
\begin{multline*}
|\maclV_{\Phi _0}a_0(X_N+X_0,X_N)|/\omega _0(X_N+X_0,X_N)
\\[1ex]
\lesssim \idotsint _{\rr {2(N-1)d}} K(X_0,\dots ,X_N)\, dX_1
\cdots dX_{N-1}.
\end{multline*}

\par

Taking the $L^{p_0'}$ norm in the first variable gives, using Minkowski's
and H{\"o}lder's inequalities,
\begin{multline*}
\nm {\maclV_{\Phi _0}a_0(\cdo ,X_N)|/\omega _0(\cdo ,X_N)}{L^{p_0'}}
\\[1ex]
\lesssim \idotsint _{\rr {2(N-1)d}} \nm {K(\cdo ,X_1,\dots ,X_N)}
{L^{p_0'}}\, dX_1 \cdots dX_{N-1}
\\[1ex]
\le \idotsint _{\rr {2(N-1)d}} g_1(X_1)
\prod _{j=2}^N g_j(X_j-X_{j-1}) \, dX_1
\cdots dX_{N-1}
\\[1ex]
= (g_1*\cdots *g_N)(X_N).
\end{multline*}
Applying the $L^{q_0'}$ norm and using Young's inequality we get
\begin{multline}\label{prodest}
\nm {a_0}{\splM ^{p_0',q_0'}_{(1/\omega _0)}} \lesssim
\nm {g_1}{L^{q_1}}\cdots \nm {g_N}{L^{q_N}} =
\nm {a_1}{\splM ^{p_1,q_1}_{(\omega _1)}}
\cdots
\nm {a_N}{\splM ^{p_N,q_N}_{(\omega _N)}}.
\end{multline}

\par

The result now follows in the case when $p_j,q_j<\infty$ for $j=1,\dots ,N$,
from the estimate \eqref{prodest} and the fact that
$\maclS _{1/2}$ is dense in $\splM ^{p_j,q_j}_{(\omega _j)}$.
In the case when at least one $p_j$ or $q_j$ attain
$\infty$ for some $j=1,\dots ,N$, \eqref{prodest} still holds
when $a_j\in \maclS _{1/2}$, $j=1,\dots ,N$, and
the Hahn--Banach theorem and duality guarantee the existence of a
continuous extension.

\par

We must prove its uniqueness and associativity. First we observe
that the assumption $\masfR _N(\mathbf q')=0$ is equivalent
to $\sum_{j=0}^N 1/q_j = N$, so $q_k = \infty$ may hold for at most one
$k$, and in that case $q_j=1$ must hold for $j \in \{0,\dots,N \} \setminus
k$. If $q_0>1$ then $q_j < \infty$ for $1 \le j \le N$. 
So either the uniqueness concerns the inclusion
\begin{equation}\label{allfinite}
\splM ^{p_1,q_1}_{(\omega _1)} \wpr  \cdots \wpr \splM ^{p_N,q_N}
_{(\omega _N)} \subseteq \splM ^{p_0',q_0'}_{(1/\omega _0)}, \quad
q_j < \infty, \quad 1 \le j \le N, \quad q_0>1, 
\end{equation}
or 
\begin{equation}\label{oneinfinite}
\splM ^{p_1,q_1}_{(\omega _1)} \wpr  \cdots \wpr \splM ^{p_N,q_N}
_{(\omega _N)} \subseteq \splM ^{p_0',\infty}_{(1/\omega _0)},
\quad q_j < \infty, \quad 1 \le j \le N, 
\end{equation}
or, for a unique $k$ such that $1 \le k \le N$, 
\begin{equation}\label{twoinfinite}
\splM ^{p_1,1}_{(\omega _1)} \wpr  \cdots \wpr \splM ^{p_k,\infty}
_{(\omega _k)} \wpr \cdots \wpr \splM ^{p_N,1}_{(\omega _N)}
\subseteq \splM ^{p_0',\infty}_{(1/\omega _0)}.  
\end{equation}

\par

First we consider \eqref{allfinite}. 
For all $j$ such that $p_j < \infty$ we may extend the Weyl product
uniquely from $a_j \in \maclS _{1/2}$ to $a_j \in \splM ^{p_j,q_j}
_{(\omega _j)}$ as in the first part of the proof, and for the remaining
$j$ we extend the Weyl product from $a_j \in \maclS _{1/2}$ to $a_j
\in \splM ^{\infty,q_j}_{(\omega _j)}$ using narrow convergence, as
follows. By induction it suffices to perform the extension for some
$j \in \{ 1,\dots,N\}$ from $a_j \in \maclS _{1/2}$  to 
$a_j \in \splM ^{\infty,q_j}_{(\omega _j)}$.

\par

Assume for simplicity that $j=1$. 
We may assume $q_1>1$. In fact, our aim is to prove uniqueness
only,  so if $q_1=1$ we may by Proposition \ref{p1.4} (2) consider the
first factor $a_1$ as an element in $\splM ^{\infty,\tilde q_1}
_{(\omega _1)}$ with the exponent $q_1$ modified as $1/\tilde
q_1=1/q_1- \ep < 1$, where $\ep>0$ is so small that we still have
$1/\tilde q_0=1/q_0+ \ep <1$ for the modification $\tilde q_0$ of
the exponent $q_0$.  

\par

Take a sequence
$\{ a_{1,n} \} _{n=1}^\infty$ that converges narrowly to $a_1 \in \splM
^{\infty,q_1}_{(\omega _1)}$. 
By Definition \ref{p2.1} this means that
$a_{1,n} \to a_1$ in $\maclS _{1/2}'(\rr {2d})$ as $n \to \infty$, 
and the existence of $g_{1,n}, g_1 \in L^{q_1}(\rr {2d})$ such that 
\begin{equation*}
\nm { \maclV_{\Phi _1}a_{1,n}(\cdo,Y) \omega _1(\cdo,Y) }{L^{\infty}}
\le g_{1,n}(Y)
\end{equation*}
and $g_{1,n} \to g_1$ in $L^{q_1}(\rr {2d})$ as well as a.{\,}e. as
$n \to \infty$. Set 
\begin{equation*}
g_j (Y) = \nm { \maclV_{\Phi _j}a_j(\cdo,X) \omega _j(\cdo,Y) }
{L^{\infty}}, \quad j=2, \dots,N, 
\end{equation*}
and define $a_{0,n} = a_{1,n} \wpr a_2\wpr \cdots \wpr a_N$.  
Lemma \ref{STFTintegral} and the definitions above yield 
\begin{align*}
& \nm{ \maclV_{\Phi _0} a_{0,n}(\cdo,X_N) / \omega _0(\cdo,X_N) }
{L^\infty}
\\[1ex]
& \lesssim \idotsint _{\rr {2(N-1)d}}   
g_{1,n} (X_1)
\prod _{j=2}^N g_j(X_j-X_{j-1})
\, dX_1 \cdots dX_{N-1} 
\\[1ex]
& = g_{1,n} * g_2 * \cdots * g_N (X_N). 
\end{align*}
From $g_{1,n} \to g_1$ in $L^{q_1}$ as $n \to \infty$ and Young's
inequality, we may conclude that $g_{1,n} * g_2 * \cdots * g_N \to
g_1 * g_2 * \cdots * g_N$ in $L^{q_0'}(\rr {2d})$. 

\par

The assumption $\sum_{j=0}^N 1/q_j = N$ implies $1/q_j \geq 1/q_0'$
for any $1 \leq j \leq N$. 
Due to Proposition \ref{p1.4} (2) we may therefore assume that
$g_2 \in L^{q}$ with $1/q=1/q_2 -1/q_0'$. Then Young's inequality
guarantees that $g_2 * \cdots * g_N \in L^{q_1'} (\rr {2d})$.
It now follows from Lemma \ref{intconv} that $g_{1,n} * g_2 * \cdots
* g_N \to g_1 * g_2 * \cdots * g_N$ a.{\,}e. So we have shown that
the sequence $\{ a_{0,n}\}$ satisfies condition (2) in Definition \ref{p2.1}
for the modulation space $\splM ^{\infty,q_0'}_{(1/\omega _0)}
(\rr {2d})$. 

\par

Let $\fy \in \maclS _{1/2}(\rr {2d})$. Our plan is to show that
$(a_{0,n} - a_{0,k}, \fy) \to 0$ as $n,k \to \infty$. 
Together with the conclusions above this will imply 
that $\{ a_{0,n} \}$ is a Cauchy sequence with
respect to narrow convergence. 
Proposition \ref{narrowprop} (2) then
guarantees that it has a narrow limit $a_0 \in \splM ^{\infty,q_0'}
_{(1/\omega _0)}(\rr {2d})$, which we use as the definition of $a_1
\wpr \cdots \wpr a_N$. It follows that the Weyl product extends uniquely 
from $a_1 \in \maclS _{1/2}$ to $a_1 \in \splM ^{\infty,q_1}
_{(\omega _1)}$. 

\par

By Lemma \ref{prodlemma} we have
\begin{multline}\label{cjmnb}
(a_{0,n} - a_{0,k},\fy) = C_{\Phi _0}(\maclV _{\Phi _0} (a_{0,n}
- a_{0,k}),\maclV _{\Phi _0} \fy)
\\[1ex]
= C_{\Phi _0}\idotsint _{\rr {2(N+1)d}} e^{2 i Q(X_0,\dots  ,X_N)}
H_{n,k}(X_0,\dots , X_N)\, dX_0\cdots dX_N,
\end{multline}
where 
\begin{multline*}
H_{n,k}(X_0,\dots , X_N)
= 4^d
\maclV _{\Phi _1} (a_{1,n}- a_{1,k})(X_1+X_{0},X_1-X_{0})\times
\\[1ex]
\times \left ( \prod _{j=2}^N
F_j(X_j,X_{j-1}) \right ) 
\overline {\maclV _{\Phi _0} \fy (X_N+X_0,X_N-X_0)}.
\end{multline*}

\par

By the narrow convergence we have $\maclV _{\Phi _1}a_{1,n}\to
\maclV _{\Phi _1}a_{1}$ pointwise as $n \to \infty$, which implies
that
\begin{equation}\label{HjmnLimit}
\lim _{n,k\to \infty }H_{n,k}(X_0,\dots , X_N)= 0, \quad
(X_0,\dots , X_N) \in \rr {2(N+1)d}. 
\end{equation}
If we define
$$
G(X,Y) =  |\maclV _{\Phi _0} \fy(X+Y,X-Y)| \, \omega _0(X+Y,X-Y),
$$
then $|H_{n,k}| \lesssim K_{n,k}$, where
\begin{multline*}
K_{n,k}(X_0,\dots , X_N)
\\[1ex]
\equiv 
(g_{1,n}(X_1-X_0) + g_{1,k}(X_1-X_0) )
\left (\prod _{j=2}^N g_{j}(X_j-X_{j-1})
\right ) |G(X_N,X_0)|.
\end{multline*}
By Young's inequality and the assumption $g_{1,n} \to g_1$ in
$L^{q_1}$, $K_{n,k}$ has a limit
in $L^1(\rr {2(N+1)d})$, denoted $K$. 
By the assumption $g_{1,n} \to g_1$ a.{\,}e., $K_{n,k} \to K$
a.{\,}e. as $n,k \to \infty$. Hence \eqref{cjmnb}, \eqref{HjmnLimit}
and Lemma \ref{GenLebThm}
imply that $(a_{0,n} - a_{0,k}, \fy) \to 0$ as $n,k \to \infty$. 

\par

By the same arguments it follows that the integral formula
\eqref{STFTintegral} holds for the extension for almost all $(X_N,X_0)
\in \rr {4d}$.  This finishes
the proof of the uniqueness of the extended Weyl product inclusion
\eqref{allfinite}.

\par

The uniqueness in the cases \eqref{oneinfinite} and
\eqref{twoinfinite} follow from the uniqueness in the case
\eqref{allfinite} and duality. 

\medspace

It remains to prove the asserted associativity, and first we need to
prove that any subproduct of $a_1\wpr
\cdots \wpr a_N$ is well defined. We observe that
\eqref{weightcond}$'$ can be written as
$$
1 \lesssim \omega _0(X_N+X_0,X_N-X_0) \, \vartheta _1(X_0,\dots ,X_k) \, \vartheta _2(X_k,\dots ,X_N)
$$
for
\begin{align*}
\vartheta _1(X_0,\dots ,X_k) &= \prod _{j=1}^k\omega _j
(X_j+X_{j-1},X_j-X_{j-1})
\intertext{and}
\vartheta _2(X_k,\dots ,X_N) &= \prod _{j=k+1}^N\omega _j
(X_j+X_{j-1},X_j-X_{j-1}),
\end{align*}
and any $1 \le k \le N-1$. If $\vartheta$ is defined by
$$
\vartheta (X_k+X_0,X_k-X_0)\equiv \inf \vartheta _1(X_0,\dots ,X_k)
$$
where the infimum is taken over all $X_1,\dots ,X_{k-1}\in \rr {2d}$, it follows
from \eqref{weightcond}$'$ that
\begin{align*}
& 1 \lesssim \vartheta (X_k+X_0,X_k-X_0)^{-1} \prod _{j=1}^k\omega _j
(X_j+X_{j-1},X_j-X_{j-1}),
\\[1ex]
\phantom{kk} & \text{$\ $} \qquad \qquad \qquad \qquad
\qquad \qquad \qquad \qquad \qquad \qquad
X_0, X_1, \dots,X_k \in \rr {2d}, 
\intertext{and}
& 1\lesssim \omega _0(X_N+X_0,X_N-X_0) \, \vartheta
(X_k+X_0,X_k-X_0) \prod _{j=k+1}^N\omega _j
(X_j+X_{j-1},X_j-X_{j-1}),
\\[1ex]
\phantom{kk} & \text{$\ $} \qquad \qquad \qquad \qquad
\qquad \qquad \qquad \qquad \quad
X_0, X_k, ,X_{k+1}, \dots,X_N \in \rr {2d}. 
\end{align*}
Note that  $\vartheta \in \mascP _E(\rr {4d})$ by the assumptions.

\par

It now follows from the first part of the proof that
\begin{align*}
a_1\wpr \cdots \wpr a_k &\in \splM ^{r,s}_{(\vartheta )}
\intertext{and}
b\wpr a_{k+1}\wpr \cdots \wpr a_N &\in \splM ^{p_0',q_0'}_{(1/\omega _0 )},
\end{align*}
when $a_j\in \splM ^{p_j.q_j}_{(\omega _j)}$, $b\in \splM ^{r,s}_{(\vartheta )}$,
and $r,s \in [1,\infty]$ are defined by 
$$
\frac 1r = \sum _{j=1}^k\frac 1{p_j}
\quad \text{and}\quad
\frac 1{s'} = \sum _{j=1}^k\frac 1{q_j'}.
$$
This shows that $a_1\wpr \cdots \wpr a_k$ and
$a_{k+1}\wpr \cdots \wpr a_N$ are well-defined as elements in
appropriate modulation spaces.

\par

The asserted associativity now follows from the density arguments in the
proof of the uniqueness, 
and the fact that the Weyl product
is associative on $\maclS _{1/2}$. 
\end{proof}

\par

For appropriate weights $\omega$ the space
$\splM ^2_{(\omega )}(\rr {2d})$ consists of symbols of Hilbert--Schmidt
operators acting between certain modulation spaces (cf. \cite{Toft5,Toft9}). 
The following proposition, with $p_j=q_j=2$ for $j=0,\dots,N$, is a
manifestation of the fact that Hilbert--Schmidt
operators are closed under composition.
The result in that special case is a consequence of
\cite[Proposition 0.2]{HTW}, which concerns $N=2$, with $p_j=q_j=2$,
$j=0,1,2$, and induction. The general result relaxes the
assumption on the exponents, and is an essential step towards
the improvement Theorem \ref{thm0.3}$'$ below.

\par

\begin{prop}\label{prop2}
Let $p_j,q_j\in [1,\infty ]$, $j=0,1,\dots , N$, and suppose
\begin{equation}\label{pqconditionsA}
\max \left ( \masfR _N(\mathbf q') ,0 \right )
\le  \min _{j=0,1,\dots ,N} \left ( \frac 1{p_j},\frac 1{p_j'},\frac 1{q_j},
\frac 1{q_j'}, \masfR _N(\mathbf p)\right ).
\end{equation}
Let $\omega _j \in \mascP _E(\rr {4d})$, $j=0,1,\dots ,N$, and
suppose \eqref{weightcond}$'$ holds. 
Then the map \eqref{Weylmap}$'$ 
from $\maclS _{1/2}(\rr {2d}) \times \cdots \times
\maclS _{1/2}(\rr {2d})$ to $\maclS _{1/2}(\rr {2d})$
extends uniquely to a continuous and associative map from
$\splM ^{p_1,q_1} _{(\omega _1)}(\rr {2d}) \times \cdots
\times \splM ^{p_N,q_N} _{(\omega _N)}(\rr {2d})$ to
$\splM ^{p_0',q_0'} _{(1/\omega _0)}(\rr {2d})$.
\end{prop}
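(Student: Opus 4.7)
My plan is to derive Proposition~\ref{prop2} by interpolating between two endpoint estimates --- the Young-type bound of Proposition~\ref{prop1} and a Hilbert--Schmidt composition bound --- and then to handle uniqueness and associativity by the same narrow-convergence machinery used in Proposition~\ref{prop1}. By the monotonicity statement in Proposition~\ref{p1.4}{\rm{(2)}}, I would first reduce to the case of equality in \eqref{pqconditionsA} at the outer $\min$/$\max$, which confines the problem to the boundary of the admissible exponent region.

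The first endpoint is Proposition~\ref{prop1} applied on the face $\masfR _N(\mathbf p)=\masfR _N(\mathbf q')=0$, i.{\,}e.\ $\sum 1/p_j=1$ and $\sum 1/q_j'=1$. The second endpoint is the Hilbert--Schmidt estimate
\[
\|a_1\wpr \cdots \wpr a_N\|_{\splM^2_{(1/\omega _0)}}
\lesssim \prod _{j=1}^N \|a_j\|_{\splM^2_{(\omega _j)}},
\]
corresponding to $p_j=q_j=2$ for all $j$; it follows from \cite[Proposition~0.2]{HTW} (the case $N=2$) together with an induction on $N$ that groups factors pairwise and invokes the associativity of $\wpr$ on $\maclS _{1/2}$. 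Both endpoints share the weight hypothesis \eqref{weightcond}$'$, so the weight transfers through the interpolation unchanged.

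With these endpoints in hand, I would invoke multilinear complex interpolation, obtained by applying Proposition~\ref{p1.4}{\rm{(5)}} in each input space and in the output space of \eqref{Weylmap}$'$. Letting $\theta \in [0,1]$ vary, and varying the particular corner of the Young simplex used as the $\theta =0$ endpoint, produces norm estimates for a family of exponent tuples $(\mathbf p,\mathbf q)$ which, after a combinatorial check, exhausts the region described by \eqref{pqconditionsA}. For tuples with all $p_j,q_j<\infty$ the norm estimate extends the Weyl product by density of $\maclS _{1/2}$, while for tuples with some $\infty$ components it extends via the Hahn--Banach theorem; uniqueness in the latter case follows from the narrow-convergence argument of Proposition~\ref{prop1}, using Proposition~\ref{narrowprop} together with Lemmas~\ref{GenLebThm} and \ref{intconv}, after which the integral representation \eqref{STFTintegral} holds for the extended product almost everywhere. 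Associativity follows verbatim by constructing the auxiliary weight $\vartheta$ as in Proposition~\ref{prop1}.

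The main obstacle will be the combinatorial verification in the interpolation step --- confirming that varying $\theta$ and the choice of Young corner genuinely covers all of \eqref{pqconditionsA}, especially on faces where several $p_j$ or $q_j$ attain $\infty$ simultaneously. Should interpolation between just these two families fall short of some corners, one can augment the list of endpoints by dual versions obtained from \eqref{duality0}, which permits swapping the role of the ``test function'' side with any single factor and thereby generates additional Young-type bounds before interpolating again.
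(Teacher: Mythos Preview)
Your approach is essentially the paper's: interpolate between Proposition~\ref{prop1} and the all-$L^2$ endpoint, then inherit uniqueness and associativity from the endpoint arguments. Two points where the paper is more direct are worth noting.

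First, for the Hilbert--Schmidt endpoint the paper does \emph{not} cite \cite[Proposition~0.2]{HTW} and induct. Induction would require intermediate weights for the partial products (the issue later handled by Lemma~\ref{intermedWeight}), which is an unnecessary complication here. Instead the paper proves the $p_j=q_j=2$ case directly from Lemma~\ref{prodlemma}: with $G_j(X,Y)=|F_j(X,Y)\,\omega_j(X+Y,X-Y)|$, iterated Cauchy--Schwarz in $X_1,\dots,X_{N-1}$ gives
\[
|F_0(X_N,X_0)|/\omega_0(\cdots)\lesssim \nm{G_1(\cdo,X_0)}{L^2}\nm{G_N(X_N,\cdo)}{L^2}\prod_{j=2}^{N-1}\nm{G_j}{L^2(\rr{4d})},
\]
and one more $L^2$ integration finishes it. This uses the weight condition \eqref{weightcond}$'$ once, globally, with no splitting.

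Second, the ``combinatorial verification'' you flag as the main obstacle is much simpler than you anticipate, and neither varying Young corners nor duality is needed. A \emph{single} $\theta$ works for each target tuple: since \eqref{pqconditionsA} forces $\masfR_N(\mathbf q')\le 1/2$, pick any $\theta\in[0,1]$ with
\[
\masfR_N(\mathbf q')\le \frac{\theta}{2}\le \min_{j}\Big(\frac1{p_j},\frac1{p_j'},\frac1{q_j},\frac1{q_j'},\masfR_N(\mathbf p)\Big),
\]
and define $r_j,s_j$ by $1/p_j=(1-\theta)/r_j+\theta/2$, $1/q_j=(1-\theta)/s_j+\theta/2$. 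The bounds on $\theta/2$ guarantee $r_j,s_j\in[1,\infty]$, and a one-line computation gives $\masfR_N(\mathbf r)=(\masfR_N(\mathbf p)-\theta/2)/(1-\theta)\ge 0$ and $\masfR_N(\mathbf s')=(\masfR_N(\mathbf q')-\theta/2)/(1-\theta)\le 0$, so Proposition~\ref{prop1} applies at the $\theta=0$ end. This is the entire interpolation step.
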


\par

\begin{proof}
First we prove the result for $p_j=q_j=2$ for all $0 \le j \le N$.
Let $a_j\in \maclS _{1/2}$, $j=1,\dots,N$, and let
$$
G_j(X,Y)=|F_j(X,Y)\omega _j(X+Y,X-Y)|,\quad j=1,\dots ,N,
$$
where $F_j$ are given by \eqref{Fjdef}. 
Lemma \ref{prodlemma} and repeated application of H\"older's
inequality give
\begin{multline*}
| F_0(X_N,X_0)|/\omega _0(X_N+X_0,X_N-X_0)
\\[1ex]
\lesssim
\idotsint _{\rr {2(N-1)d}}
\left (\prod _{j=1}^N G_j(X_j,X_{j-1})\right )\, dX_1
\cdots dX_{N-1}
\\[1ex]
\le \nm {G_1(\cdo ,X_0)}{L^2(\rr {2d})} \nm {G_N(X_N,\cdo )}{L^2(\rr {2d})}
\prod _{j=2}^{N-1} \nm {G_j}{L^2(\rr {4d})}.
\end{multline*}
Taking the $L^2(\rr {4d})$ norm gives
$$
\nm {a_0}{\splM ^2_{(1/\omega _0)}}
\lesssim \prod _{j=1}^N\nm {G_j}{L^2}\asymp
\prod _{j=1}^N\nm {a_j}{\splM ^2_{(\omega _j)}}.
$$
The claim follows from this estimate and the fact that $\maclS _{1/2}$
is dense in $\splM ^2_{(\omega _j)}$.

\par

The proof of the general case is based on multi-linear interpolation between
the case $p_j=q_j=2$ for $0 \le j \le N$ and Proposition \ref{prop1}.

\par

More precisely, by Proposition \ref{prop1}
and the first part of this proof we have
$$
\splM ^{r_1,s_1}_{(\omega _1)} \wpr \cdots \wpr \splM ^{r_N,s_N}_{(\omega _N)}
\subseteq \splM ^{r_0',s_0'} _{(1/\omega _0)}
\quad \text{and}\quad
\splM ^{2,2}_{(\omega _1)} \wpr \cdots \wpr \splM ^{2,2}_{(\omega _N)}
\subseteq \splM ^{2,2} _{(1/\omega _0)},
$$
when $r_j,s_j \in [1,\infty]$, $j=0,1,\dots,N$, and
\begin{equation}\label{rsuvconds}
\masfR _N(\mathbf s')\le 0\le \masfR _N(\mathbf r).
\end{equation}

\par

By multi-linear interpolation, using \cite[Theorem 4.4.1]{BeLo} and
Proposition \ref{p1.4} (5), we get
\begin{equation}\label{Mcont}
\splM ^{p_1,q_1}_{(\omega _1)} \wpr \cdots \wpr \splM ^{p_N,q_N}_{(\omega _N)}
\subseteq \splM ^{p_0',q_0'} _{(1/\omega _0)}
\end{equation}
when
\begin{equation}\label{intpqcond}
\frac 1{p_j} = \frac {1-\theta}{r_j}+\frac {\theta}{2}\quad \text{and}\quad
\frac 1{q_j} = \frac {1-\theta}{s_j}+\frac {\theta}{2},\quad 0\le \theta \le 1,
\end{equation}
$j=0,1,\dots,N$. 

Suppose $p_j,q_j\in [1,\infty]$, $j=0,1,\dots,N$, satisfy \eqref{pqconditionsA}.
We have to show that there
exist $0 \leq \theta \leq 1$, $r_j\in [1,\infty ]$ and $s_j\in [1,\infty ]$,
$j=0,1,\dots,N$, such that \eqref{rsuvconds} and \eqref{intpqcond}
are satisfied, after which \eqref{Mcont} follows by multi-linear
interpolation. Our plan is to first find an appropriate $\theta$, and then
find $\mathbf r$ and $\mathbf s$ with the right properties.

\par

We have $r_j\in [1,\infty ]$ if and only if
$$
0 \leq \frac{1-\theta}{r_j} = \frac1{p_j} - \frac{\theta}{2} \leq 1-\theta,
$$
i.{\,}e. $\theta/2 \leq \min(1/p_j, 1/p_j')$, and likewise $s_j\in [1,\infty ]$ if
and only if $\theta/2 \leq \min(1/q_j, 1/q_j')$.
Since $\masfR_N (\mathbf q') \leq 1/2$ as a consequence of
\eqref{pqconditionsA}, there exists $\theta \in [0,1]$ such that
\begin{equation*}
\masfR _N (\mathbf q') \leq \frac{\theta}{2}
\leq \min _{j=0,1,\dots ,N} \left ( \frac {1}{q_j},\frac {1}{q_j'},\frac {1}{p_j},
\frac {1}{p_j'}, \masfR _N(\mathbf p)\right )
\end{equation*}
again by the assumption \eqref{pqconditionsA}. With such a choice of
$\theta$ we have $r_j, s_j \in [1,\infty ]$ for $j=0,1,\dots ,N$, and
\begin{equation*}
\masfR_N (\mathbf q') \le \theta /2\le \masfR_N (\mathbf p).
\end{equation*}
This gives
\begin{equation*}
\masfR _N(\mathbf r) =\frac 1{1-\theta}\left (\masfR _N(\mathbf p)-
\frac {\theta}2\right )\ge 0
\end{equation*}
and
\begin{equation*}
\masfR _N(\mathbf s') =\frac 1{1-\theta}\left (\masfR _N(\mathbf q')-
\frac {\theta}2\right )
\le \frac 1{1-\theta}\left (\frac \theta 2 -
\frac {\theta }2\right ) = 0.
\end{equation*}
Hence \eqref{rsuvconds} and \eqref{intpqcond} are satisfied, and \eqref{Mcont}
follows. Thus \eqref{pqconditionsA} implies \eqref{Mcont}.

\par

It remains to prove the associativity. 
If $\masfR _N(\mathbf q ')\le 0\le \masfR _N(\mathbf p )$ the associativity follows
from Proposition \ref{prop1}, and if
$p_j=q_j=2$, $j=0,\dots ,N$, the associativity follows from
the associativity of the Weyl product on $\maclS _{1/2}$, and the fact
that $\maclS _{1/2}$ is dense in $\splM ^{2,2}_{(\omega _j)}$ for every
$j$.
The associativity now follows in general from the fact that the general
case is an interpolation between the latter two cases. 
\end{proof}

\par

\begin{rem}
A crucial step in the proof is the fact that
\eqref{pqconditionsA} implies that $\theta$ and $\mathbf r,\mathbf
s\in [1,\infty]^{N+1}$ can be chosen such that \eqref{rsuvconds}
and \eqref{intpqcond} holds.
On the other hand, by straight-forward computations it follows that if
\eqref{rsuvconds} and \eqref{intpqcond} are fulfilled, then
\eqref{pqconditionsA} holds. 
\end{rem}

\par

\begin{rem}\label{remextension}
We note that Proposition \ref{prop2} extends \cite[Theorem 0.3$'$]{HTW}. 
The latter result asserts that if $N=2$,
\begin{equation}\label{thm0.3HTWformulas}
\begin{aligned}
\masfR _2(\mathbf p) &= \masfR _2(\mathbf q'), \qquad q_1,q_2\le q_0', 
\\[1ex]
0 \le \masfR _2(\mathbf p)&\le \frac 1{p_j},\frac 1{q_j},\frac 1{p_0'},
\frac 1{q_0'}\le 1-\masfR _2(\mathbf q '),\quad j=1,2 ,
\end{aligned}
\end{equation}
and $\omega _j$, $j=0,1,2$, are weights that satisfy
\eqref{weightcond}, then the map
\eqref{Weylmap} extends to a continuous map from
$\splM ^{p_1,q_1}_{(\omega _1)}\times \splM ^{p_1,q_1}
_{(\omega _1)}$ to $\splM ^{p_0',q_0'}_{(1/\omega _0)}$.
We claim that \eqref{thm0.3HTWformulas}
implies \eqref{pqconditionsA} when $N=2$, which means
that Proposition \ref{prop2} extends \cite[Theorem 0.3$'$]{HTW}.

\par

In fact, by the last inequality in \eqref{thm0.3HTWformulas} we get
$$
\masfR _2(\mathbf q')\le \frac 1{p_j'},\frac 1{q_j'},
\frac 1{p_0},\frac 1{q_0}, \quad j=1,2.
$$
A combination of these inequalities gives
$$
\masfR_2 (\mathbf q') = \masfR_2 (\mathbf p) \le \min _{j=0,1,2}\left (
\frac 1{p_j},\frac 1{p_j'},\frac 1{q_j}\frac 1{q_j'}\right ),
$$
and it follows that the hypothesis  \eqref{pqconditionsA} in Proposition
\ref{prop2} is fulfilled for $N=2$. 
\end{rem}

\par

Next we prove that the conclusion of Proposition \ref{prop2} holds
under assumptions that are weaker than \eqref{pqconditionsA}. 
The following lemma shows that we may omit the
condition $\masfR _N(\mathbf q')\le \min_{0 \le j \le N} (1/q_j)$ in
\eqref{pqconditionsA}.

\par

\begin{lemma}\label{lemmaAthm0.3}
Let $N \ge 2$, $x_j\in [0,1]$, $j=0,\dots ,N$ and consider the inequalities:
\begin{enumerate}
\item[{\rm{(1)}}] $\displaystyle{(N-1)^{-1}\left (\sum _{k=0}^Nx_k -1\right )\le 
\min _{0\le j\le N}x_j}$;

\vrum

\item[{\rm{(2)}}] $x_j+x_k\le 1$, for all $k\neq j$;

\vrum

\item[{\rm{(3)}}] $\displaystyle{(N-1)^{-1}\left (\sum _{k=0}^Nx_k -1\right )
\le \min _{0\le j\le N}(1-x_j)}$.
\end{enumerate}
Then
$$
{\rm{(1)}}\Longrightarrow {\rm{(2)}} \Longrightarrow {\rm{(3)}}.
$$

\par

If $N=2$ then ${\rm{(1)}}$ and ${\rm{(2)}}$ are
equivalent.
\end{lemma}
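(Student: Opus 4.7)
The plan is to treat the three implications separately using elementary rearrangements, exploiting the symmetry between the indices in (1) and (3).

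For the implication (1)$\Rightarrow$(2), I would fix any pair $j \neq k$ and apply the hypothesis (1) at each of the $N-1$ indices $i \in \{0,\dots,N\} \setminus \{j,k\}$. Multiplying through by $N-1$ turns (1) into $\sum_{\ell=0}^N x_\ell - 1 \le (N-1) x_i$. Summing these $N-1$ inequalities, the right-hand sides collapse to $(N-1) \sum_{i \neq j,k} x_i$, and after dividing by $N-1$ we obtain
\begin{equation*}
\sum_{\ell=0}^N x_\ell - 1 \le \sum_{i \neq j,k} x_i,
\end{equation*}
which is exactly $x_j + x_k \le 1$. This is the main algebraic step; no obstacle beyond bookkeeping the right number of summands.

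For (2)$\Rightarrow$(3), I would fix $j$ and use $x_\ell \le 1 - x_j$ for the $N$ indices $\ell \neq j$. Summing gives $\sum_{\ell \neq j} x_\ell \le N(1-x_j)$, hence
\begin{equation*}
\sum_{\ell=0}^N x_\ell - 1 = \sum_{\ell \neq j} x_\ell + x_j - 1 \le N(1-x_j) + x_j - 1 = (N-1)(1-x_j).
\end{equation*}
Dividing by $N-1$ yields the desired bound at index $j$, and since $j$ was arbitrary (3) follows.

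For the equivalence in the case $N=2$, it remains to show (2)$\Rightarrow$(1). Here the statement (1) reads $x_0+x_1+x_2-1 \le x_i$ for each $i\in \{0,1,2\}$, which upon rearranging is exactly the three pairwise inequalities $x_j + x_k \le 1$ comprising (2). So (1) and (2) coincide when $N=2$, and the extra room in (1) for larger $N$ is what prevents the reverse implication from holding in general.
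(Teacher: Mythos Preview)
Your proof is correct. The arguments for (2)$\Rightarrow$(3) and for the equivalence when $N=2$ are essentially the same as in the paper. For (1)$\Rightarrow$(2) your approach differs: you give a direct proof by summing the $N-1$ instances of (1) at the indices $i\notin\{j,k\}$, whereas the paper argues by contradiction, assuming $x_0+x_1>1$ with $x_2=\min_j x_j$ and deriving the impossible chain $(N-1)x_2\le \sum_{k\ge 2} x_k < \sum_k x_k -1 \le (N-1)x_2$. Your averaging argument is slightly cleaner in that it avoids the WLOG renumbering and the indirect reasoning, while the paper's contradiction argument makes the role of the minimum more explicit; both are equally elementary.
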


\par

\begin{proof}
Assume that (1) holds but (2) fails. Then $x_j+x_k>1$ for some $j\neq k$.
By renumbering we may assume that $x_2\le x_j$ for every $j$, and
that $x_0+x_1>1$. Then (1) gives
\begin{equation*}
(N-1)x_2\le \sum _{k=2}^Nx_k < \sum _{k=0}^Nx_k -1
\le (N-1)x_2
\end{equation*}
which is a contradiction. Hence the assumption
$x_0+x_1>1$ must be wrong and it follows that (1) implies (2).

\par

Now assume that (2) holds. Then
$$
x_j\le 1-x_k,\quad k\neq j.
$$
This gives
\begin{equation*}
\sum _{j\neq k}x_j\le N(1-x_k)\quad
\Longleftrightarrow \quad
\sum _{j=0}^N x_j-1\le (N-1)(1-x_k)
\end{equation*}
for any $k$, so (3) holds.

\par

Finally, if $j\neq k$, $N=2$ and (2) holds, then $x_j+x_k\le 1$,
which gives $x_j+x_k+x_l-1\le x_l$, $l=0,1,2$. In particular,
$$
\sum _{k=0}^2x_k -1\le x_l,\qquad l=0,1,2,
$$
and (1) follows.
\end{proof}

\par

The next result is one of two principal theorems on sufficient conditions for
continuity. It shows that one can eliminate some conditions on the Lebesgue
exponents in Proposition \ref{prop2}. In particular the result extends
\cite[Theorem 0.3$'$]{HTW} in view of Remark \ref{remextension}.

\par

\renewcommand{\rubrik}{Theorem \ref{thm0.3}$'$}

\begin{tom}
Let $p_j,q_j\in [1,\infty ]$, $j=0,1,\dots , N$, and suppose
\begin{equation}\tag*{(\ref{pqconditions})$'$}
\max \left ( \masfR _N(\mathbf q') ,0 \right )
\le  \min _{j=0,1,\dots ,N} \left ( \frac 1{p_j},\frac 1{q_j'},
\masfR _N(\mathbf p)\right ).
\end{equation}
Let $\omega _j \in \mascP _E(\rr {4d})$, $j=0,1,\dots ,N$, and suppose
\eqref{weightcond}$'$ holds. Then the map
\eqref{Weylmap}$'$ from $\maclS _{1/2}(\rr {2d}) \times \cdots \times
\maclS _{1/2}(\rr {2d})$ to $\maclS _{1/2}(\rr {2d})$
extends uniquely to a continuous and associative map from $\splM ^{p_1,q_1}
_{(\omega _1)}(\rr {2d}) \times \cdots \times \splM ^{p_N,q_N}
_{(\omega _N)}(\rr {2d})$ to $\splM ^{p_0',q_0'} _{(1/\omega _0)}(\rr {2d})$.
\end{tom}

\par

\begin{proof}
We may assume that $\masfR _N(\mathbf q')> 0$ since otherwise the
result follows from Proposition \ref{prop1}. By Lemma \ref{lemmaAthm0.3}
the conditions \eqref{pqconditions}$'$ imply
\begin{equation}\label{qjqj}
\masfR _N(\mathbf q') \le \min _{j=0,1,\dots ,N}\left ( \frac 1{q_j},\frac 1{q_j'}  \right ).
\end{equation}
Hence, if $r$ is defined by
$$
\frac 1r \equiv \masfR _N(\mathbf q'),
$$
then $r\ge 2$.

\par

By Proposition \ref{prop2} and \eqref{qjqj} we have
$$
\splM ^{u_1,q_1}_{(\omega _1)}\wpr \cdots \wpr \splM ^{u_N,q_N}_{(\omega _N)}
\subseteq \splM ^{u_0',q_0'}_{(1/\omega _0)},
$$
when $u_j \in [1,\infty]$ for $0 \le j \le N$ and
\begin{equation}\label{pqconditions2}
\frac 1r 
\le  \min _{j=0,1,\dots ,N}
\left ( \frac 1{u _j},\frac 1{u _j'},\frac 1{q_j'},
\masfR _N(\mathbf u )\right ).
\end{equation}
Due to Proposition \ref{p1.4} (2) the result follows if we can
prove that $p_j\le u_j$ for some $u_j\in [1,\infty ]$, $j=0,\dots ,N$,
that satisfy \eqref{pqconditions2}. We claim that
$u_j =\max (p_j ,r')$, $j=0, \dots ,N$, satisfy \eqref{pqconditions2}. 

\par

To wit, for such a choice we have
$$
\frac 1{u_j'}=\max \left ( \frac 1{p_j'},\frac 1r\right )\ge \frac 1r, \quad j=0,
\dots ,N, 
$$
and
\begin{equation}\label{rhoest}
\frac 1{u_j} = \min \left ( \frac 1{p_j},\frac 1{r'}\right )\ge \frac 1r,
\quad j=0, \dots ,N, 
\end{equation}
where \eqref{rhoest} follows from $r\ge 2$ and the
assumption $p_j\le r$. 

\par

Let $I$ be the set of all $j$ such that $r'\le p_j$. If
$I=\{ 0,1,\dots ,N\}$ the result follows from Proposition \ref{prop2}.
Therefore we may assume that there exists $k \in \{ 0,1,\dots ,N\}$
such that $k \notin I$.  Then $u_k=r'$, and \eqref{rhoest} gives
\begin{multline*}
(N-1) \masfR _N(\mathbf u ) =
\displaystyle {\frac 1{u_k} +\sum _{j \neq k}
\frac 1{u_j} -1}
=
\displaystyle {\frac 1{r'} + \sum _{j \neq k}
\frac 1{u_j} -1}
\\[1ex]
=
\displaystyle {-\frac 1{r} + \sum _{j \neq k}
\frac 1{u_j}}
\ge
\displaystyle {-\frac 1{r} + 
\frac N{r}} 
= \frac {N-1}r.
\end{multline*}
Hence
$$
\masfR _N(\mathbf u )\ge \frac 1r
$$
and the continuity assertion follows.

\par

The uniqueness and associativity follows from Proposition \ref{prop2} and
the inclusions above. 
\end{proof}

\par

In the next section we prove that Theorem \ref{thm0.3}$'$ is sharp for
$N=2$ with respect to the conditions on the Lebesgue exponents. On the
other hand, for $N\ge 3$, the result cannot be sharp. In fact, Theorem
\ref{thm0.3}$'$ with $N=2$ gives in particular that every unweighted
modulation space $\splM ^{p,q}$ is an $\splM ^{\infty ,1}$-module.
This property combined with the fact that $\splM ^{2,2}$ is an algebra
under the Weyl product give the inclusion
\begin{equation}\label{Ex3-lin-form}
\splM ^{\infty ,1}\wpr \splM ^{2,2}\wpr \splM ^{2,2} \subseteq \splM ^{2,2}.
\end{equation}
Theorem \ref{thm0.3}$'$ does however not contain this inclusion.

\par

The next result gives another sufficient condition for the map
\eqref{Weylmap}$'$ to be continuous that contains the inclusion
\eqref{Ex3-lin-form}. In the bilinear case $N=2$ the result follows from
Theorem \ref{thm0.3}$'$, because of the sharpness of the latter result
in that case.

\par

\begin{thm}\label{thm0.4}
Let $p_j,q_j\in [1,\infty ]$, $j=0,1,\dots , N$, and suppose
\begin{equation}\label{pqconditions3}
\masfR _N(\mathbf p) \ge 0
\quad \text{and}\quad
\frac 1{q_j'}\le  \frac 1{p_j} \leq \frac12.
\end{equation}
Let $\omega _j \in \mascP _E(\rr {4d})$, $j=0,1,\dots ,N$, and suppose
\eqref{weightcond}$'$ holds. Then the map
\eqref{Weylmap}$'$ from $\maclS _{1/2}(\rr {2d}) \times \cdots \times
\maclS _{1/2}(\rr {2d})$ to $\maclS _{1/2}(\rr {2d})$
extends uniquely to a continuous and associative map
from $\splM ^{p_1,q_1} _{(\omega _1)}(\rr {2d}) \times \cdots
\times \splM ^{p_N,q_N} _{(\omega _N)}(\rr {2d})$ to $\splM
^{p_0',q_0'} _{(1/\omega _0)}(\rr {2d})$.
\end{thm}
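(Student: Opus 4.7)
The plan is to combine the bilinear case of Theorem \ref{thm0.4} (which will follow from Theorem \ref{thm0.3}$'$) with a multi-linear complex interpolation argument along the lines of the proof of Proposition \ref{prop2}, supplemented by an associativity-based reduction in the corner cases.

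For the base case $N=2$, I will verify that hypothesis \eqref{pqconditions3} implies hypothesis \eqref{pqconditions} of Theorem \ref{thm0.3}$'$. The condition $1/p_j \le 1/2$ combined with $1/q_j' \le 1/p_j$ yields $1/q_j' + 1/q_k' \le 1$ for all $j \ne k$; for $N=2$, the equivalence $(1)\Leftrightarrow(2)$ in Lemma \ref{lemmaAthm0.3} then gives $\masfR_2(\mathbf{q}') \le \min_j 1/q_j'$, and together with $\masfR_2(\mathbf{q}') \le \masfR_2(\mathbf{p})$ (by term-by-term comparison of $1/q_j' \le 1/p_j$) and $1/q_j' \le 1/p_j$ themselves, this is precisely \eqref{pqconditions}, so Theorem \ref{thm0.3}$'$ applies.

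For $N \ge 3$, the primary approach is multi-linear complex interpolation (Proposition \ref{p1.4}(5)) between the Hilbert--Schmidt endpoint $(r_j, s_j) = (2, 2)$ of Proposition \ref{prop2} and a Young-type endpoint $(u_j, v_j)$ of Proposition \ref{prop1}, writing $1/p_j = (1-\theta)/u_j + \theta/2$ and $1/q_j = (1-\theta)/v_j + \theta/2$. A computation parallel to the one in the proof of Proposition \ref{prop2} reduces the existence of valid parameters $\theta \in [0,1]$ and $(u_j, v_j) \in [1,\infty]^{N+1}$ to the inequality
$$
\masfR_N(\mathbf{q}') \le \theta/2 \le \min\bigl(\masfR_N(\mathbf{p}),\, \min_j 1/q_j'\bigr),
$$
where the upper bound $\min_j 1/q_j'$ absorbs $\min_j 1/p_j$ via $1/q_j' \le 1/p_j$. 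Such $\theta$ exists whenever condition (1) of Lemma \ref{lemmaAthm0.3} holds for $x_j = 1/q_j'$, in which case interpolation directly gives the continuity.

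When (1) fails (which can occur for $N \ge 3$, although (2) and hence (3) are always implied by the assumption $1/q_j' \le 1/p_j \le 1/2$), I would fall back on associativity: split $a_1 \wpr \cdots \wpr a_N = (a_1 \wpr \cdots \wpr a_k) \wpr (a_{k+1} \wpr \cdots \wpr a_N)$ with an intermediate weight $\vartheta \in \mascP_E(\rr{4d})$ built by the infimum construction used in the associativity portion of the proof of Proposition \ref{prop1}, grouping the factors for which $1/q_j'$ is small (the ``Sjöstrand-type'' factors) into one subproduct handled by Proposition \ref{prop1} (in particular, the $\splM^{\infty,1}$ algebra case) and the remaining factors into a subproduct handled inductively (by Proposition \ref{prop2} or by a smaller instance of Theorem \ref{thm0.4}); the bilinear Theorem \ref{thm0.3}$'$ then combines them. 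The main obstacle will be this combinatorial choice of $k$, the intermediate exponents, and the intermediate weight, ensuring that all three pieces satisfy their respective hypotheses simultaneously. Uniqueness of the continuous extension follows from Proposition \ref{narrowprop} and narrow-convergence arguments modelled on the proof of Proposition \ref{prop1}, and associativity of the extension follows from its associativity on $\maclS_{1/2}(\rr{2d})$ together with the continuity and density of the test function spaces.
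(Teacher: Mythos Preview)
Your $N=2$ case is correct and matches the paper. The genuine gap is that for $N\ge 3$ your ``primary'' interpolation approach adds nothing: whenever your displayed inequality $\masfR_N(\mathbf q')\le \theta/2\le \min(\masfR_N(\mathbf p),\min_j 1/q_j')$ has a solution, condition (1) of Lemma \ref{lemmaAthm0.3} holds with $x_j=1/q_j'$, and then (combining with $1/q_j'\le 1/p_j$ and $\masfR_N(\mathbf q')\le \masfR_N(\mathbf p)$) the hypothesis \eqref{pqconditions}$'$ of Theorem \ref{thm0.3}$'$ is already satisfied. Thus interpolation recovers exactly the range of Theorem \ref{thm0.3}$'$ and nothing more. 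But the whole point of Theorem \ref{thm0.4} is to capture cases \emph{not} covered by Theorem \ref{thm0.3}$'$ (e.{\,}g.\ the motivating example $\splM^{\infty,1}\wpr\splM^{2,2}\wpr\splM^{2,2}\subseteq\splM^{2,2}$, where $\masfR_3(\mathbf q')=1/4>0=\min_j 1/q_j'$). So all the content lies in your ``fallback,'' which you leave as a vague associativity argument with an unresolved combinatorial choice.

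The paper does not use interpolation here at all. It first reduces to $q_j'=p_j$ via Proposition \ref{p1.4}(2) (a simplification you don't make), and then runs a clean induction on $N$. The intermediate weight is supplied by a dedicated lemma (Lemma \ref{intermedWeight}), not the infimum construction from Proposition \ref{prop1}. The induction step has a precise dichotomy: either some cyclically adjacent pair satisfies $1/p_j+1/p_{j+1}\le 1/2$ (then, after a duality rotation so that the pair is $\{0,N\}$, peel off $a_N$, apply the induction hypothesis to $a_1\wpr\cdots\wpr a_{N-1}$ landing in $\splM^{r_0',r_0}_{(1/\vartheta)}$ with $1/r_0=1/p_0+1/p_N$, and combine with $a_N$ via Proposition \ref{prop1}); or all adjacent sums exceed $1/2$, in which case the intermediate space is $\splM^{2,2}_{(1/\vartheta)}$ and one combines the induction hypothesis for $a_1\wpr\cdots\wpr a_{N-1}$ with the bilinear $N=2$ case. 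This specific case split and the role of $\splM^{2,2}$ as the pivot in the second case are the missing ideas in your proposal.
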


\par

The proof is by induction over $N$, and we need
the existence of certain intermediate weights.  
The following lemma guarantees the existence of such weights.

\par

\begin{lemma}\label{intermedWeight}
Let $\omega _0,\dots ,\omega _N\in \mascP _E(\rr {4d})$ satisfy
\eqref{weightcond}$'$. Then there exists
a weight $\vartheta \in \mascP _E(\rr {4d})$
such that
\begin{equation}\label{varthetaests}
\begin{aligned}
1 & \lesssim \frac{\omega _0(X_2+X_0,X_2-X_0) 
\omega _N(X_2+X_1,X_2-X_1)}{\vartheta (X_1+X_0,X_1-X_0)},
\quad X_0, X_1, X_2 \in \rr {2d}, 
\\[1ex]
1 & \lesssim \vartheta (X_{N-1}+X_0,X_{N-1}-X_0)\prod _{j=1}^{N-1}\omega
_j(X_j+X_{j-1},X_j-X_{j-1}), \\[1ex] 
& \qquad \qquad X_0, \dots, X_{N-1} \in \rr {2d}.
\end{aligned}
\end{equation}
\end{lemma}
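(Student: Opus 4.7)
The plan is to define $\vartheta$ directly as an infimum that is tailored to realize both inequalities in \eqref{varthetaests} with the help of \eqref{weightcond}$'$. Set
$$
\vartheta (U,V) = \inf _{W\in \rr {2d}} \omega _0\bigl (W+(U-V)/2, W-(U-V)/2\bigr ) \, \omega _N\bigl (W+(U+V)/2, W-(U+V)/2\bigr )
$$
for $U,V\in \rr {2d}$. With this definition, the first inequality in \eqref{varthetaests} is immediate: choosing $U=X_1+X_0$ and $V=X_1-X_0$, so that $(U-V)/2=X_0$ and $(U+V)/2=X_1$, the value of the infimum is bounded above by the value of the integrand at $W=X_2$, which gives exactly the first inequality of \eqref{varthetaests}.

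For the second inequality, fix $X_0,X_1,\dots ,X_{N-1}\in \rr {2d}$ and let $X_N$ range over $\rr {2d}$ in \eqref{weightcond}$'$. Since the factors $\omega _j(X_j+X_{j-1},X_j-X_{j-1})$ for $1\le j\le N-1$ do not depend on $X_N$, taking the infimum over $X_N$ on the right-hand side of \eqref{weightcond}$'$ produces exactly the product of $\prod _{j=1}^{N-1}\omega _j(X_j+X_{j-1},X_j-X_{j-1})$ and $\vartheta (X_{N-1}+X_0,X_{N-1}-X_0)$, proving the second inequality.

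It remains to verify $\vartheta \in \mascP _E(\rr {4d})$. Moderateness is inherited from $\omega _0$ and $\omega _N$: if $v_0,v_N$ are submultiplicative moderants for $\omega _0$ and $\omega _N$, then shifting $(U,V)$ by $(U',V')$ shifts each factor inside the infimum by an amount controlled by $v_0$ and $v_N$, and one concludes
$$
\vartheta (U+U',V+V')\lesssim \vartheta (U,V)\,v(U',V')
$$
with $v$ submultiplicative. Positivity follows from \eqref{weightcond}$'$ specialized to $X_1=\cdots =X_{N-2}=0$ (the case $N=2$ being even simpler): taking the infimum over $X_N$ in the resulting inequality one obtains
$$
\vartheta (U,V) \gtrsim \frac {1}{\omega _1((U-V)/2,-(U-V)/2)\,\omega _{N-1}((U+V)/2,(U+V)/2)} >0,
$$
after absorbing the constants $\omega _j(0,0)$ for $2\le j\le N-2$ into the implicit constant. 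Local boundedness above is clear from the definition and the local boundedness of $\omega _0,\omega _N$.

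The main conceptual point is that \eqref{weightcond}$'$ is precisely the compatibility condition that makes this construction work: the first inequality in \eqref{varthetaests} forces $\vartheta $ to be \emph{at most} an infimum of $\omega _0\cdot \omega _N$-type, while the second inequality requires that this very infimum be \emph{at least} a positive multiple of the product of the intermediate weights $\omega _1,\dots ,\omega _{N-1}$; \eqref{weightcond}$'$ is exactly what bridges these two requirements, so there is no further obstacle.
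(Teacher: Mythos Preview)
Your proof is correct. It follows the same skeleton as the paper's---separate the factors in \eqref{weightcond}$'$ into the ``outer'' pair $\omega _0,\omega _N$ (depending on $X_0,X_{N-1},X_N$) and the ``inner'' block $\omega _1,\dots ,\omega _{N-1}$ (depending on $X_0,\dots ,X_{N-1}$), then define $\vartheta$ as an extremum that makes one of the two inequalities in \eqref{varthetaests} tight---but you make the \emph{dual} choice of extremum. The paper sets
\[
\vartheta (X,Y)=\sup _{X_1,\dots ,X_{N-2}}\Bigl (\prod _{j=1}^{N-1}\omega _j(X_j+X_{j-1},X_j-X_{j-1})\Bigr )^{-1},
\]
i.e.\ the largest weight for which the second inequality holds (the first then follows from \eqref{weightcond}$'$); you instead take the smallest weight for which the first inequality holds, namely the infimum over the single variable $W=X_N$ of $\omega _0\cdot \omega _N$, and deduce the second inequality from \eqref{weightcond}$'$. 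Your choice has the mild advantage that the optimization is over one variable rather than $N-2$, and the moderateness check involves only $\omega _0,\omega _N$; the paper's choice makes the second inequality an identity, which aligns slightly better with how $\vartheta$ is later used in the induction step of Theorem \ref{thm0.4}. Either construction works, and \eqref{weightcond}$'$ is precisely what sandwiches the two candidate definitions between one another.
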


\par

\begin{proof}
Let $X=X_{N-1}+X_0$, $Y=X_{N-1}-X_0$, define the linear mappings from
$\rr {6d}$ to $\rr {4d}$ given by
$$
T_{j,k} (X,Y,Z)= \left (  \frac {X+(-1)^jY}2 +Z, (-1)^k \left (   
\frac {X+(-1)^jY}2 -Z  \right )  \right ),
$$
for $j,k=1,2$, and set
\begin{align*}
H_1(X_1,\dots ,X_{N-2}) &\equiv \prod _{j=2}^{N-2}\omega _j
(X_j+X_{j-1},X_j-X_{j-1}),
\\[1ex]
H_2(X_1,X_{N-2}, X,Y)
&\equiv \omega _1 \big (T_{1,1}(X,Y, X_1) \big )\, 
\omega _{N-1}\big (T_{2,2}(X,Y,X_{N-2})\big )
\intertext{and}
H_3(X_N,X,Y)
&\equiv \omega _0\big( T_{1,1}(X,Y,X_N)
\big ) \omega _N\big (T_{2,1}(X,Y,X_N)\big ) .
\end{align*}
Then \eqref{weightcond}$'$ is equivalent to
\begin{equation*}
\left ( H_2\left (X_1,X_{N-2}, X , Y \right )
H_1(X_1,\dots ,X_{N-2}) \right )^{-1}
\lesssim
H_3 \left ( X_N,X,Y \right ).
\end{equation*}
The left hand side 
is independent of $X_N$
and the right hand side is independent of $X_1,\dots ,X_{N-2}$.

\par

If we define
$$
\vartheta (X,Y) \equiv \sup_{X_1,\dots ,X_{N-2} \in \rr {2d}} 
\left ( H_2\left (X_1,X_{N-2},X,
Y\right ) H_1(X_1,\dots ,X_{N-2}) \right )^{-1}
$$
then
\begin{multline*}
\vartheta (X_{N-1}+X_0,X_{N-1}-X_0) \lesssim H_3
\left ( X_N,X_{N-1}+X_0,X_{N-1} -X_0 \right ),
\\[1ex]
X_0,X_{N-1},X_N\in \rr {2d}, 
\end{multline*}
and \eqref{varthetaests} holds.

\par

It remains to show $\vartheta \in \mascP _E(\rr {4d})$. 
For $\ep >0$ arbitrary we have
\begin{multline*}
\vartheta (Z_1+Z_2,Y_1+Y_2)
\\[1ex]
\le \left ( H_2\left (X_1,X_{N-2},
Z_1+Z_2,Y_1+Y_2\right )
H_1(X_1,\dots ,X_{N-2}) \right )^{-1}+\ep
\end{multline*}
for some choice of $X_1,\dots ,X_{N-2}$. Since each $\omega _j$
is moderate we have
\begin{multline*}
\left ( H_2\left (X_1,X_{N-2},
Z_1+Z_2,Y_1+Y_2\right ) \right )^{-1}
\\[1ex]
\le C
\left ( H_2\left (X_1,X_{N-2},
Z_1,Y_1\right ) \right )^{-1}v(Z_2,Y_2),
\end{multline*}
for $C>0$ and some submultiplicative function $v \in L^\infty
_{\rm loc}(\rr {4d})$, which depends on
$\omega _1$ and $\omega _{N-1}$.

This estimate yields
\begin{multline*}
\vartheta (Z_1+Z_2,Y_1+Y_2)
\\[1ex]
\le C
\left ( H_2\left (X_1,X_{N-2},
Z_1,Y_1\right )
H_1(X_1,\dots ,X_{N-2}) \right )^{-1}v(Z_2,Y_2)+\ep
\\[1ex]
\le C \vartheta (Z_1,Y_1)v(Z_2,Y_2)+\ep .
\end{multline*}
Since $\ep >0$ is arbitrary and $C$ does not depend on $\ep$,
it follows that $\vartheta$ is $v$-moderate, and 
we may conclude that $\vartheta \in \mascP _E(\rr {4d})$. 
\end{proof}

\par

\begin{proof}[Proof of Theorem \ref{thm0.4}]
By Proposition \ref{p1.4} (2) we may assume $q_j'=p_j$, $j=0,\dots,N$.

\par

We start by proving the result for $N=2$. Assume 
\eqref{pqconditions3} for $N=2$. Then for every fixed $j\in \{ 0,1,2 \}$
we get
$$
\masfR _2(\mathbf p) = \masfR _2(\mathbf q') = \sum _{k=0}^2\frac 1{p_k} -1
\le \frac 1{p_j}.
$$
The continuity statement now follows from Theorem \ref{thm0.3}$'$.

\par

Next we perform the induction step. We assume that $N\ge 3$ and
the result holds for lower values of $N$. In particular we assume the
inclusion
\begin{equation}\label{induktionsantagande}
\splM ^{r_1,r_1'}_{(\omega_1)} \wpr \cdots \wpr \splM
^{r_{N-1},r_{N-1}'}_{(\omega _{N-1} )}
\subseteq \splM ^{r_0',r_0}_{(1/\vartheta)}
\end{equation}
whenever $r_j \geq 2$, 
$$
\sum _{j=0}^{N-1} \frac1{r_j} \geq 1, 
$$
and $(\vartheta,\omega _1, \dots, \omega _{N-1} ) \in \mascP
_E(\rr {4d})^{N}$ satisfy 
\eqref{weightcond}$'$.

We now distinguish two cases.

\par

In the first case we suppose that
\begin{equation}\label{ineq3}
\frac 1{p_0} + \frac 1{p_N} \le \frac 12 \quad \text{or}\quad
\frac 1{p_j}+\frac 1{p_{j+1}} \le \frac 12 \quad \text{for some $j \in
\{0,\dots ,N-1\}$.}
\end{equation}
By \eqref{duality0} and duality it suffices to consider the case when
the first inequality in \eqref{ineq3} holds. 
We define $r_0$ as
$$
\frac 1{r_0}= \frac 1{p_0}+\frac 1{p_N}\le \frac 12,
$$
and the result follows if we prove the inclusions
\begin{align}
\splM ^{p_1,p_1'}_{(\omega _1)}\wpr \cdots \wpr \splM ^{p_{N-1},p_{N-1}'}
_{(\omega _{N-1})} & \subseteq \splM ^{r_0',r_0}_{(1/\vartheta )}
\label{r0CaseN-1}
\intertext{and}
\splM ^{r_0',r_0}_{(1/\vartheta )}\wpr \splM ^{p_N,p_N'}_{(\omega _N)}
&\subseteq \splM ^{p_0',p_0}_{(1/\omega _0)},
\label{r0Case2}
\end{align}
where $\vartheta$ is chosen according to Lemma \ref{intermedWeight}.

\par

Since
$$
\frac 1{r_0}+\sum _{k=1}^{N-1}\frac 1{p_k} = \sum _{k=0}^{N}\frac 1{p_k}\ge 1
\quad \text{and}\quad \frac 1{r_0},\frac 1{p_j}\le \frac 12,\ j=1,\dots ,N-1,
$$
the inclusion \eqref{r0CaseN-1} follows from the induction
assumption \eqref{induktionsantagande}.

\par

By letting $s_0=p_0$, $s_1=r_0'$, $s_2=p_N$, it follows from the
choice of $r_0$ that $\masfR _2(\mathbf s)=0$, and the inclusion
\eqref{r0Case2} follows from Proposition \ref{prop1}, since
$(\omega_0, 1/\vartheta, \omega_N)$ satisfy
\eqref{weightcond}$'$ for $N=2$ by Lemma \ref{intermedWeight}.
The induction step is now complete in the first case by combining
\eqref{r0CaseN-1} and \eqref{r0Case2}.

\par

It remains to consider the second case where \eqref{ineq3} does
not hold. Therefore suppose that
\begin{equation}\label{ineq4}
\frac 1{p_0}+\frac 1{p_{N}} >\frac 12
\quad \text{and}\quad
\frac 1{p_j}+\frac 1{p_{j+1}}  > \frac 12, 
\end{equation}
for all $j=0,\dots,N-1$.
In particular we have 
$$
\frac1{p_0} + \frac1{p_N}  + \frac12 -1 >0,  
$$
so by the result for $N=2$ we have the inclusion
\begin{equation}\label{caseN2}
\splM ^{2,2}_{(1/\vartheta)} \wpr \splM ^{p_N,p_N'}
_{(\omega _N)}  \subseteq \splM ^{p_0',p_0}_{(1/\omega_0 )}. 
\end{equation}
Since $N \geq 3$, \eqref{ineq4} implies
\begin{equation}\nonumber
\frac 1{p_1} + \frac 1{p_2} + \cdots + \frac 1{p_{N-1}} + \frac
1 2 - 1  > 0, 
\end{equation}
and the induction hypothesis \eqref{induktionsantagande} thus
gives
\begin{equation}\label{r0CaseN-1a}
\splM ^{p_1,p_1'}_{(\omega _1)} \wpr \cdots
\wpr \splM ^{p_{N-1},p_{N-1}'} _{(\omega _{N-1})}
\subseteq \splM ^{2,2}_{(1/\vartheta )}.
\end{equation}
Combining \eqref{caseN2} and \eqref{r0CaseN-1a} gives the
induction step in the second case. The induction step is thus
complete so the continuity statement holds for any integer
$N \geq 2$.

\par

Finally, the uniqueness and associativity of the extension follows
as in the proof of Proposition \ref{prop1}. 
In fact, if $p_j=\infty$ then by the assumptions $q_j=1$, and a factor
$a_j\in \splM ^{\infty ,1}_{(\omega _j)}$ can be approximated narrowly
by elements in $\maclS _{1/2}$. If $p_j<\infty$ then the assumption
$2\le q_j'$ implies that a factor $a_j\in \splM ^{p_j ,q_j}_{(\omega _j)}$
can be approximated in norm by elements in $\maclS _{1/2}$.
\end{proof}

\par

We may use \eqref{calculitransform} and Proposition
\ref{propCalculiTransfMod} to extend Theorems
\ref{thm0.3}$'$ and \ref{thm0.4} to concern not only the Weyl product but general products
arising in the pseudo-differential calculi \eqref{e0.5} indexed by $t \in \re$. More precisely, for
every $t\in \mathbf R$, the $\wpr _t$ product with $N$ factors
\begin{equation}\label{tProdmap}
(a_1,\dots ,a_N)\mapsto a_1\wpr _t \cdots \wpr _t a_N
\end{equation}
from $\maclS _{1/2}(\rr {2d})\times \cdots \times
\maclS _{1/2}(\rr {2d})$ to
$\maclS _{1/2}(\rr {2d})$ is defined by the formula
$$
\op _t(a _1 \wpr _t \cdots \wpr _t a_N) = \op _t(a_1)\circ \cdots
\op _t(a_N).
$$
By \eqref{calculitransform} we have
\begin{multline*}
a _1 \wpr _t \cdots \wpr _t a_N
=
e^{it_0\scal {D_x}{D_\xi }} ((e^{-it_0\scal {D_x}{D_\xi }}a _1)
\wpr  \cdots \wpr  (e^{-it_0\scal {D_x}{D_\xi }} a_N)),
\\[1ex]
t_0=\frac 12 -t.
\end{multline*}
If we combine this relation with
Proposition \ref{propCalculiTransfMod}, Theorems
\ref{thm0.3}$'$ and \ref{thm0.4}, we get the following result.
The condition on the weight functions is
\begin{multline}\label{weightcondtcalc}
1 \lesssim \omega _0(T_t(X_N,X_0))\prod _{j=1}^N
\omega _j(T_t(X_{j},X_{j-1})),
\quad X_0,\dots ,X_N \in \rr {2d},
\end{multline}
where
\begin{multline}\label{Ttdef}
T_t(x,\xi,y,\eta) = ( t x + (1-t)y,(1-t)\xi + t \eta ,\eta -\xi, x-y),
\\[1ex]
x,\xi, y,\eta \in \rr d.
\end{multline}

\par

\begin{thm}\label{thm0.3+0.4tOps}
Let $p_j,q_j\in [1,\infty ]$, $j=0,1,\dots , N$, be as in Theorems
\ref{thm0.3}$'$ or \ref{thm0.4}.
Let $t\in \mathbf R$, $\omega _j \in \mascP _E(\rr {4d})$, $j=0,1,\dots ,N$,
and suppose
\eqref{weightcondtcalc} and \eqref{Ttdef} hold. Then the map
\eqref{tProdmap} from $\maclS _{1/2}(\rr {2d}) \times \cdots \times
\maclS _{1/2}(\rr {2d})$ to $\maclS _{1/2}(\rr {2d})$
extends uniquely to a continuous and associative map from $M ^{p_1,q_1}
_{(\omega _1)}(\rr {2d}) \times \cdots \times M ^{p_N,q_N}
_{(\omega _N)}(\rr {2d})$ to $M ^{p_0',q_0'} _{(1/\omega _0)}(\rr {2d})$.
\end{thm}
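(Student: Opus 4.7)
The plan is to reduce the statement to the Weyl case $t = 1/2$ already covered by Theorems \ref{thm0.3}$'$ and \ref{thm0.4}. The main tools are the identity
$$
a_1 \wpr_t \cdots \wpr_t a_N = e^{it_0 \scal{D_x}{D_\xi}}\bigl((e^{-it_0 \scal{D_x}{D_\xi}} a_1) \wpr \cdots \wpr (e^{-it_0 \scal{D_x}{D_\xi}} a_N)\bigr),
$$
with $t_0 = t - 1/2$, recalled just before the theorem, and Proposition \ref{propCalculiTransfMod}, which says that $e^{\pm it_0\scal{D_x}{D_\xi}}$ is a homeomorphism between modulation spaces $M^{p,q}_{(\omega)}$ whose weights are tied together by a shift in the $(x,\xi)$ variables.

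First, starting from $a_j \in M^{p_j,q_j}_{(\omega_j)}(\rr{2d})$, Proposition \ref{propCalculiTransfMod} gives that $b_j := e^{-it_0 \scal{D_x}{D_\xi}} a_j$ belongs to $M^{p_j,q_j}_{(\omega_j^{(t_0)})}(\rr{2d})$, where $\omega_j^{(t_0)}(x,\xi,\eta,y) = \omega_j(x + t_0 y, \xi + t_0 \eta, \eta, y)$. Using the identification $\splM^{p,q}_{(\omega)}(\rr{2d}) = M^{p,q}_{(\omega')}(\rr{2d})$ with $\omega(x,\xi,y,\eta) = \omega'(x,\xi,-2\eta,2y)$ from Section \ref{sec1}, each $b_j$ may equivalently be viewed as an element of a symplectic modulation space $\splM^{p_j,q_j}_{(\tilde\omega_j)}(\rr{2d})$ for a weight $\tilde\omega_j \in \mascP_E(\rr{4d})$ obtained by composing the shift of Proposition \ref{propCalculiTransfMod} with this reparametrization.

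The heart of the argument, and the step I expect to be the main obstacle, is the verification that the weights $\tilde\omega_0, \tilde\omega_1, \dots, \tilde\omega_N$ so obtained automatically satisfy the Weyl weight condition \eqref{weightcond}$'$. This is precisely where the particular form of $T_t$ in \eqref{Ttdef} becomes essential: a direct unpacking of the definitions shows that the shift $(x + t_0 y, \xi + t_0 \eta, \eta, y)$ coming from conjugation by $e^{-it_0\scal{D_x}{D_\xi}}$ produces exactly the combination $((1-t)x + ty, t\xi + (1-t)\eta)$ in the first two components of $T_t$, while the $M \leftrightarrow \splM$ reparametrization $(y,\eta) \mapsto (-2\eta,2y)$ matches the remaining components $\xi - \eta$ and $y - x$. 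Consequently, hypothesis \eqref{weightcondtcalc} for $\omega_0, \dots, \omega_N$ translates into \eqref{weightcond}$'$ for $\tilde\omega_0, \dots, \tilde\omega_N$. The verification is elementary but bookkeeping-intensive.

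Once the Weyl weight condition is in place, the appropriate instance of Theorem \ref{thm0.3}$'$ or \ref{thm0.4} (depending on which of the two hypotheses on the Lebesgue exponents is assumed) produces a unique continuous associative extension of the Weyl product to
$$
\splM^{p_1,q_1}_{(\tilde\omega_1)} \times \cdots \times \splM^{p_N,q_N}_{(\tilde\omega_N)} \to \splM^{p_0',q_0'}_{(1/\tilde\omega_0)}.
$$
Reversing the two transformations, first the $\splM \to M$ identification and then an application of $e^{it_0\scal{D_x}{D_\xi}}$ through Proposition \ref{propCalculiTransfMod}, delivers the asserted continuous map into $M^{p_0',q_0'}_{(1/\omega_0)}(\rr{2d})$. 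Uniqueness and associativity transfer from the Weyl case, since $e^{\pm it_0\scal{D_x}{D_\xi}}$ is a homeomorphism on $\maclS_{1/2}'(\rr{2d})$ and on each of the modulation spaces involved, and since $\wpr_t$ is associative on $\maclS_{1/2}$ where the identity above is valid pointwise.
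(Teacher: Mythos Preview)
Your proposal is correct and follows exactly the approach the paper sketches: the paper states, without further detail, that the theorem follows by combining the conjugation identity $a_1\wpr_t\cdots\wpr_t a_N = e^{it_0\scal{D_x}{D_\xi}}((e^{-it_0\scal{D_x}{D_\xi}}a_1)\wpr\cdots\wpr(e^{-it_0\scal{D_x}{D_\xi}}a_N))$ with Proposition~\ref{propCalculiTransfMod} and Theorems~\ref{thm0.3}$'$ and~\ref{thm0.4}. Your write-up simply fills in the bookkeeping that the paper leaves to the reader, in particular the verification that \eqref{weightcondtcalc} becomes \eqref{weightcond}$'$ after the weight shift and the $M\leftrightarrow\splM$ reparametrization.
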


\par

Finally we prove a continuity result for the twisted convolution. 
The map \eqref{Weylmap}$'$ is then replaced by
\begin{equation}\label{Twistmap}
(a_1,a_2,\dots ,a_N)\mapsto a_1*_\sigma a_2*_\sigma \cdots *_\sigma a_N.
\end{equation}
The following
result follows immediately from Theorem \ref{thm0.3}$'$ and Theorem \ref{thm0.4}.
Here the condition \eqref{weightcond}$'$ is replaced by
\begin{multline}\label{weightcond3}
1 \lesssim \omega _0(X_N-X_0,X_N+X_0)\prod _{j=1}^N
\omega _j(X_j-X_{j-1},X_j+X_{j-1}),
\\[1ex]
X_0,X_1,\dots ,X_N\in \rr {2d}.
\end{multline}

\par

\begin{thm}\label{thm0.3TwistConv}
Let $p_j,q_j\in [1,\infty ]$, $j=0,1,\dots , N$, and suppose that
\begin{equation*}
\max \left ( \masfR _N(\mathbf p') ,0 \right )
\le  \min _{j=0,1,\dots ,N} \left ( \frac 1{p_j'},\frac 1{q_j},
\masfR _N(\mathbf q)\right )
\end{equation*}
or
\begin{equation*}
\masfR _N(\mathbf q) \ge 0
\quad \text{and}\quad
\frac 1{p_j'}\le  \frac 1{q_j} \leq \frac12.
\end{equation*}
Suppose $\omega _j\in \mascP _E(\rr {4d})$, $j=0,1,\dots ,N$, satisfy
\eqref{weightcond3}. Then the map
\eqref{Twistmap} from $\maclS _{1/2}(\rr {2d}) \times \cdots \times
\maclS _{1/2}(\rr {2d})$ to $\maclS _{1/2}(\rr {2d})$
extends uniquely to a continuous and associative map from
$\splW^{p_1,q_1}_{(\omega _1)}(\rr {2d})
\times \cdots \times \splW^{p_N,q_N}_{(\omega _N)}(\rr {2d})$
to $\splW^{p_0',q_0'} _{(1/\omega _0)}(\rr {2d})$.
\end{thm}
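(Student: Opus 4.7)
The plan is to deduce Theorem \ref{thm0.3TwistConv} directly from Theorems \ref{thm0.3}$'$ and \ref{thm0.4} by conjugating the twisted convolution with the symplectic Fourier transform $\mascF_\sigma$, using the intertwining identity \eqref{weyltwist2} and the space-level identity \eqref{twistfourmod}. Iterating \eqref{weyltwist2} yields, for $b_1,\dots,b_N\in \maclS_{1/2}(\rr{2d})$,
\begin{equation*}
\mascF_\sigma(b_1 *_\sigma \cdots *_\sigma b_N)
= (2\pi)^{d(N-1)/2}\, (\mascF_\sigma b_1) \wpr \cdots \wpr (\mascF_\sigma b_N),
\end{equation*}
so that the $N$-fold twisted convolution is transported into the $N$-fold Weyl product.

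First I would introduce the swap operation $\omega^\sharp(X,Y)=\omega(Y,X)$ on weights in $\mascP_E(\rr{4d})$; it clearly preserves moderateness. By \eqref{twistfourmod}, if $b_j\in \splW^{p_j,q_j}_{(\omega_j)}(\rr{2d})$, then $a_j:=\mascF_\sigma b_j\in \splM^{q_j,p_j}_{(\omega_j^\sharp)}(\rr{2d})$. The key computation is that the weight hypothesis \eqref{weightcond3} on $(\omega_0,\dots,\omega_N)$ is equivalent to \eqref{weightcond}$'$ for the swapped tuple $(\omega_0^\sharp,\dots,\omega_N^\sharp)$: substituting $\omega_j^\sharp$ reverses the two $(X_j\pm X_{j-1})$-arguments and turns \eqref{weightcond}$'$ into \eqref{weightcond3}.

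Next I would verify that the Lebesgue-exponent hypotheses match up after the swap $p_j\leftrightarrow q_j$. The first alternative
$\max(\masfR_N(\mathbf p'),0)\le \min_j(1/p_j',1/q_j,\masfR_N(\mathbf q))$
is, under $\tilde p_j=q_j$, $\tilde q_j=p_j$, precisely
$\max(\masfR_N(\tilde{\mathbf q}'),0)\le \min_j(1/\tilde p_j,1/\tilde q_j',\masfR_N(\tilde{\mathbf p}))$,
the hypothesis \eqref{pqconditions}$'$ of Theorem \ref{thm0.3}$'$. Similarly the second alternative $\masfR_N(\mathbf q)\ge 0$ with $1/p_j'\le 1/q_j\le 1/2$ becomes $\masfR_N(\tilde{\mathbf p})\ge 0$ with $1/\tilde q_j'\le 1/\tilde p_j\le 1/2$, which is \eqref{pqconditions3} for Theorem \ref{thm0.4}. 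In either case the cited theorem gives that
$a_1\wpr\cdots\wpr a_N$ extends to a continuous associative multi-linear map into $\splM^{q_0',p_0'}_{(1/\omega_0^\sharp)}(\rr{2d})$.

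Finally I would close the loop: applying $\mascF_\sigma$ once more, using that $\mascF_\sigma^2=\operatorname{id}$ on $\maclS_{1/2}'(\rr{2d})$ together with \eqref{twistfourmod}, lands the result in $\splW^{p_0',q_0'}_{(1/\omega_0)}(\rr{2d})$ (since $(1/\omega_0^\sharp)^\sharp=1/\omega_0$). The identity displayed above then transfers the continuity bound, uniqueness of extension, and associativity from the Weyl product to the twisted convolution $*_\sigma$. There is essentially no obstacle, as both the exponent conditions and the weight condition translate verbatim under the swap; the only mildly delicate point is checking that $\mascF_\sigma$ really is an isomorphism $\splW^{p,q}_{(\omega)}\to\splM^{q,p}_{(\omega^\sharp)}$ with matching constants, which is nothing more than \eqref{twistfourmod} read backwards.
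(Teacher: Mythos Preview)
Your proposal is correct and follows essentially the same route as the paper, which simply states that the result ``follows immediately from Theorem \ref{thm0.3}$'$ and Theorem \ref{thm0.4}'' without spelling out the details. You have made explicit the conjugation argument via $\mascF_\sigma$ using \eqref{weyltwist2} and \eqref{twistfourmod}, including the verification that the swap $p_j\leftrightarrow q_j$ and $\omega_j\mapsto\omega_j^\sharp$ carries the exponent conditions and the weight condition \eqref{weightcond3} exactly to those of the cited theorems.
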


\par

\section{Necessary boundedness conditions}\label{sec3}

\par

In this section we prove necessary conditions for continuity of the
Weyl product when $N=2$ and certain polynomially moderate weight
triplets that satisfy \eqref{weightcond}.

\par

More precisely, for weights of the form
\begin{equation}\label{weightcond2}
\begin{aligned}
\omega _0(X,Y) &= \frac {\vartheta _0(X+Y)}{\vartheta _2(X-Y)},\quad
\omega _1(X,Y)   = \frac {\vartheta _2(X-Y)}{\vartheta _1(X+Y)},
\\[1ex]
\omega _2(X,Y) &= \frac {\vartheta _1(X-Y)}{\vartheta _0(X+Y)},\quad
\end{aligned}
\end{equation}
where $\vartheta _j \in  \mascP (\rr {2d})$, $j=0,1,2$, we have
the following result. Note that the necessary condition
\eqref{partialineq} equals the sufficient condition
\eqref{pqconditions} of Theorem \ref{thm0.3}. 

\par

\begin{thm}\label{thm0.3Rev}
Let $p_j,q_j\in [1,\infty ]$, $\vartheta _j\in \mascP (\rr {2d})$,
and define $\omega _j\in \mascP (\rr {4d})$ by
\eqref{weightcond2} for $j=0,1,2$. If the map
\eqref{Weylmap} 
from $\maclS _{1/2}(\rr {2d}) \times \maclS _{1/2}(\rr {2d})$ to
$\maclS _{1/2}(\rr {2d})$ is extendable to a continuous map from
$\splM ^{p_1,q_1}_{(\omega _1)}(\rr {2d})
\times \splM ^{p_2,q_2}_{(\omega _2)}(\rr {2d})$ to $\splM ^{p_0',q_0'}
_{(1/\omega _0)}(\rr {2d})$,
then
\begin{equation}\label{partialineq}
\max(\masfR _2(\mathbf q'),0)
\le  \min _{j=0,1,2} \left ( \frac {1}{p_j},\frac {1}{q_j'}, \masfR
_2(\mathbf p)\right ).
\end{equation}
\end{thm}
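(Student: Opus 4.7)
My plan is to derive the necessary conditions \eqref{partialineq} by testing the assumed continuous extension of \eqref{Weylmap} against carefully chosen families of Schwartz functions in $\maclS_{1/2}(\rr{2d})$, then extracting the inequalities in the limit as parameters tend to extreme values.

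A preliminary observation is that with the weights prescribed by \eqref{weightcond2}, a direct computation shows
\begin{equation*}
\omega_0(X_2+X_0,X_2-X_0)\,\omega_1(X_1+X_0,X_1-X_0)\,\omega_2(X_2+X_1,X_2-X_1)\equiv 1,
\end{equation*}
not merely $\gtrsim 1$. So the three weights enter the trilinear inequality
$|(a_1 \wpr a_2, a_0)| \lesssim \prod_{j=0}^2 \|a_j\|_{\splM^{p_j,q_j}_{(\omega_j)}}$
(which is equivalent to the assumed bilinear bound, via \eqref{duality0} and Proposition \ref{p1.4} (4)) in a perfectly cyclic, symmetric way; moreover, the triplet $(\omega_0,\omega_1,\omega_2)$ is itself cyclically symmetric under the induced cyclic rotation of $(\vartheta_0,\vartheta_1,\vartheta_2)$. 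Consequently, once a necessary condition involving an index $j$ has been established by testing one factor, the cyclic symmetry supplies the analogous conditions for the other two indices. This reduces the work roughly by a factor of three.

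The body of the proof splits into two test-family calculations. First, using dilated Gaussians $a_j^\lambda(X) = e^{-\lambda_j |X|^2}$ (which lie in $\maclS_{1/2}(\rr{2d})$ for all $\lambda_j>0$), one computes $\|a_j^\lambda\|_{\splM^{p_j,q_j}_{(\omega_j)}}$ directly from the symplectic STFT with a Gaussian window, and similarly computes $\|a_1^\lambda \wpr a_2^\lambda\|_{\splM^{p_0',q_0'}_{(1/\omega_0)}}$, using that the Weyl product of two Gaussians is again an explicit Gaussian (whose symbol one obtains from \eqref{tvist1} and \eqref{twist1}). Comparing the resulting powers of $\lambda_j$ as $\lambda_j \to 0^+$ and $\lambda_j \to \infty$ along appropriate rays extracts the H\"older--Young relations $\masfR_2(\mathbf p) \ge 0$ and $\masfR_2(\mathbf q') \le \masfR_2(\mathbf p)$. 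To obtain the remaining inequalities $\masfR_2(\mathbf q') \le 1/p_j$ and $\masfR_2(\mathbf q') \le 1/q_j'$, I would test on finite sums of symplectic time-frequency translates of a Gaussian $\Phi \in \maclS_{1/2}(\rr{2d})$,
\begin{equation*}
a_j=\sum_{\mu\in I_j} c_\mu^{(j)}\, e^{2i\sigma(\mu,\cdot)}\Phi(\cdot-\nu_\mu),
\end{equation*}
with index sets $I_j$ supported on a sufficiently sparse lattice $\Lambda\subset\rr{4d}$. For such a lattice the modulation norm $\|a_j\|_{\splM^{p_j,q_j}_{(\omega_j)}}$ is equivalent to a weighted $\ell^{p_j,q_j}$ norm of the coefficient sequence by standard Gabor frame theory, reducing the estimate to a discrete inequality on sequences. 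The Weyl product $a_1 \wpr a_2$ expands via Lemma \ref{prodlemma} into a twisted sum over $I_1 \times I_2$ of twisted products of Gaussian atoms, whose output modulation norm is bounded below by a power of $|I_1|\,|I_2|$. Choosing $I_j$ to be "rectangular" with respect to the phase-space splitting of $X=(x,\xi)$ yields the claimed inequalities by letting $|I_j|\to\infty$.

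The main obstacle is the second (Gabor atom) step: one must both evaluate the modulation norms of the lattice sums accurately with the polynomial weights $\vartheta_j$ present (ensuring that choosing $\vartheta_j$ appropriately does not obstruct the scaling), and track the phase twists arising when multiplying two symplectic time-frequency shifts under the Weyl product. Verifying that destructive interference from these twists does not degrade the expected power of $|I_1|\,|I_2|$ in the lower bound of $\|a_1 \wpr a_2\|_{\splM^{p_0',q_0'}_{(1/\omega_0)}}$ is the delicate combinatorial point; the cyclic symmetry of the setup is what ultimately makes the lattice geometry tractable.
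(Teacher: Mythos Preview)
Your Gaussian testing for $\masfR_2(\mathbf p)\ge 0$ and $\masfR_2(\mathbf q')\le \masfR_2(\mathbf p)$ is essentially the paper's approach (Lemmas \ref{gauss} and \ref{LemmaSuff5}); the paper also uses anisotropic Gaussians $a_{\lambda,\mu}(x,\xi)=e^{-\lambda|x|^2-\mu|\xi|^2}$ with $\lambda=1/\mu\to\infty$ for the latter inequality, and obtains $\masfR_2(\mathbf q')\le 1/q_j'$ (equivalently \eqref{suff3}) from the duality/permutation symmetry you noted together with \cite[Corollary~3.4]{HTW}, rather than from a lattice construction.

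There are two genuine gaps in your plan. First, you carry the polynomial weights $\vartheta_j$ through the Gaussian and Gabor computations, but explicit modulation-norm formulas for Gaussians are only available in the unweighted case; with general $\vartheta_j\in\mascP$ the norms are not transparent enough to read off exponents. The paper handles this by a separate reduction lemma (Lemma \ref{WeightReduction}), which uses the isomorphism results of \cite{GrochToft1} to show that continuity with the weights \eqref{weightcond2} is \emph{equivalent} to unweighted continuity. This is a nontrivial step you would need to supply.

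Second, and more seriously, for the inequality $\masfR_2(\mathbf q')\le 1/p_j$ your Gabor-atom strategy is left at exactly the point where it becomes hard: you need a \emph{lower} bound on $\|a_1\wpr a_2\|_{\splM^{p_0',q_0'}}$ of the right order, and the twisted phases in the Weyl product of time-frequency shifts make such a bound far from automatic. The paper takes a different, indirect route (Lemma \ref{LemmaSuff4}): assuming the inequality fails, it builds a uniformly bounded sequence $a_N\in\splM^{p_1,q_1}$ (Wigner distributions $W_{\fy,f_{2,N}}$) and a single $b\in\splM^{p_2,q_2}$ (a Kohn--Nirenberg symbol $f_1\otimes\widehat f_3$), and then, rather than estimating $\|a_N\wpr b\|$ from below directly, observes that the assumed inclusion forces $\op^w(a_N\wpr b)$ to be uniformly bounded from $M^{q_0}$ to $M^{q_0'}$. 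It then contradicts this by exhibiting $f_4\in M^{q_0}$ with $\op^w(a_N\wpr b)f_4 = c_N\fy$ and $|c_N|\to\infty$, the divergence coming from a harmonic-series-type sum over lattice translates of a compactly supported bump (not a Gaussian). This operator-theoretic detour sidesteps the phase-interference problem entirely; your direct approach would need a new idea to control it.
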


\par

\begin{rem}
The conditions \eqref{weightcond2} on the weights appear naturally when
Weyl operators with symbols in modulation spaces act on modulation spaces. 
For example, if \eqref{weightcond2} is fulfilled,
$p,q\in [1,\infty ]$ and $a_1\in \splM ^{\infty ,1}_{(\omega _1)}(\rr {2d})$,
then $\op ^w(a_1)$ is continuous from $M^{p,q}_{(\vartheta _1)}(\rr d)$
to $M^{p,q}_{(\vartheta _{2})}(\rr d)$ (cf. \cite[Theorem 6.2]{Toft8}).
\end{rem}

\par

We need some preparations for the proof. The first result is a
reduction to trivial weights. For $\omega \in \mascP (\rr {2d})$
we use the notation $S_{(\omega)}(\rr {2d})$ for the symbol
space of all $a \in C^\infty(\rr {2d})$ such that $(\partial^\alpha
a)/\omega \in L^\infty$ for all $\alpha \in \nn {2d}$. 

\par

\begin{lemma}\label{WeightReduction}
Let $\vartheta, \vartheta _1,\vartheta _2\in \mascP (\rr {2d})$, $\omega _1,
\omega _2\in \mascP (\rr {4d})$ and let $p,q\in [1,\infty ]$. 
There exist
$b_j\in S_{(\vartheta _j)}(\rr {2d})$ and $c_j\in S_{(1/\vartheta _j)}
(\rr {2d})$ such that
$$
\op ^w(b_j)\circ \op ^w(c_j) = \op ^w(c_j)\circ \op ^w(b_j)
$$
is the identity map on $\mascS '(\rr d)$, for $j=1,2$,  
and the following holds:
\begin{enumerate}
\item[{\rm{(1)}}] $\op ^w(b_j)$ is continuous and bijective from
$M ^{p,q}_{(\vartheta )} (\rr d)$ to $M ^{p,q}_{(\vartheta /
\vartheta _j)}(\rr d)$ with inverse $\op ^w(c_j)$, $j=1,2$;

\vrum

\item[{\rm{(2)}}] if $\omega _2(X,Y)\lesssim \omega _1(X,Y)
/\vartheta (X+Y)$, then the map
\eqref{Weylmap} on $\mascS (\rr {2d})$ extends uniquely to a
continuous map from $\splM ^{p,q}_{(\omega _1)}(\rr {2d})
\times S_{(\vartheta )}(\rr {2d})$ to $\splM ^{p,q}_{(\omega
_2)}(\rr {2d})$;

\vrum

\item[{\rm{(3)}}] if $\omega _2(X,Y)\lesssim \omega _1(X,Y)/
\vartheta (X-Y)$, then the map \eqref{Weylmap} on $\mascS
(\rr {2d})$ extends uniquely to a continuous map from
$S_{(\vartheta )}(\rr {2d})\times \splM ^{p,q}_{(\omega _1)}(\rr {2d})$
to $\splM ^{p,q}_{(\omega _2)}(\rr {2d})$;

\vrum

\item[{\rm{(4)}}] if $\omega (X,Y)=\vartheta _2(X-Y)/\vartheta
_1(X+Y)$, then the map $a\mapsto b_2 \wpr a \wpr c_1$ is
continuous on $\mascS (\rr {2d})$, and extends uniquely to
a continuous and bijective map from $\splM ^{p,q}
_{(\omega )}(\rr {2d})$ to $\splM ^{p,q}(\rr {2d})$.
\end{enumerate}
\end{lemma}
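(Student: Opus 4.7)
The plan is to construct $b_j$ and $c_j$ first, derive (2) and (3) from the bilinear case of Theorem \ref{thm0.3}$'$ applied with one factor being a Shubin-type symbol, and finally combine to obtain (4). For (1), choose $b_j = \vartheta _j \ast \psi$ where $\psi \in \mascS (\rr {2d})$ is a nonnegative bump of unit integral. Moderateness of $\vartheta _j \in \mascP$ yields $b_j \in S_{(\vartheta _j)}$ with $b_j \asymp \vartheta _j$, so $b_j$ is elliptic. A standard parametrix construction in the polynomially weighted Shubin calculus produces $\tilde c_j \in S_{(1/\vartheta _j)}$ with $b_j \wpr \tilde c_j - 1 \in \mascS (\rr {2d})$, and a Neumann series correction of the resulting smoothing remainder delivers an exact two-sided inverse $c_j \in S_{(1/\vartheta _j)}$ satisfying $b_j \wpr c_j = c_j \wpr b_j = 1$. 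Continuity and bijectivity of $\op ^w(b_j)$ from $M^{p,q}_{(\vartheta )}$ to $M^{p,q}_{(\vartheta /\vartheta _j)}$ then follow from the standard mapping properties of elliptic Weyl operators with Shubin-type symbols on weighted modulation spaces (cf. \cite[Theorem 6.2]{Toft8}).

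For (2) and (3), the key fact is that $S_{(\vartheta )}(\rr {2d}) \subseteq \splM ^{\infty ,1}_{(\tilde \omega )}(\rr {2d})$ for $\tilde \omega (X,Y) = \langle Y \rangle ^M /\vartheta (X)$ and every $M \ge 0$; this is immediate from integration by parts in $Y$ in the integral defining $\maclV _\Phi a$. For (2), I apply Theorem \ref{thm0.3}$'$ with $N=2$ and Lebesgue exponents $(p_0,q_0,p_1,q_1,p_2,q_2) = (p',q',p,q,\infty ,1)$, so that $\masfR _2(\mathbf p) = \masfR _2(\mathbf q ')=0$ and \eqref{pqconditions}$'$ is trivially fulfilled. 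With the Weyl-level weights chosen as $(1/\omega _2, \omega _1, \tilde \omega )$, the condition \eqref{weightcond}$'$ rearranges to
\begin{equation*}
\omega _2(X_2+X_0,X_2-X_0)\, \vartheta (X_2+X_1) \lesssim \omega _1(X_1+X_0,X_1-X_0)\,\langle X_2-X_1\rangle ^M ,
\end{equation*}
which follows from $\omega _2(X,Y) \lesssim \omega _1(X,Y)/\vartheta (X+Y)$ together with polynomial moderateness of $\vartheta ,\omega _1,\omega _2$, provided $M$ is chosen large enough to absorb the resulting moderating polynomial factors in $X_2-X_1$. Part (3) is handled by a symmetric choice of exponents, or alternatively via the duality \eqref{duality0}.

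Part (4) follows by chaining (3) and (2). Applying (3) with $\vartheta = \vartheta _2$ places $b_2 \wpr a$ in $\splM ^{p,q}_{(\omega ')}$ where $\omega '(X,Y) = \omega (X,Y)/\vartheta _2(X-Y) = 1/\vartheta _1(X+Y)$; applying (2) with $\vartheta = 1/\vartheta _1$ then places $b_2 \wpr a \wpr c_1$ in $\splM ^{p,q}_{(\omega '')}$ with $\omega ''(X,Y) = \omega '(X,Y)\,\vartheta _1(X+Y) \equiv 1$, yielding the desired continuous map into $\splM ^{p,q}(\rr {2d})$. Reversing the roles shows that $b \mapsto c_2 \wpr b \wpr b_1$ continuously maps $\splM ^{p,q}(\rr {2d})$ to $\splM ^{p,q}_{(\omega )}(\rr {2d})$, and the identities $b_j \wpr c_j = c_j \wpr b_j = 1$ combined with the associativity of $\wpr$ on the relevant modulation spaces (Theorem \ref{thm0.3}$'$) make the two maps mutually inverse. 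The main obstacle in this plan is the exact parametrix step in (1): while the approximate parametrix is routine, producing an exact two-sided inverse $c_j \in S_{(1/\vartheta _j)}$ requires showing that the Neumann correction remains in the appropriate Shubin class and induces a bijection on every $M^{p,q}_{(\vartheta )}$, a point that is standard in the Shubin calculus with polynomially moderate weights but must be verified carefully.
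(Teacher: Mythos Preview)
Your argument is correct and matches the paper's own proof almost step by step. For (2) and (3) the paper proceeds exactly as you do: it shows $S_{(\vartheta)}\subseteq \splM^{\infty,1}_{(\omega)}$ with $\omega(X,Y)=\eabs{Y}^{2N}/\vartheta(X)$ (citing \cite[Proposition~2.7(3)]{GrochToft1} rather than doing the integration-by-parts by hand), verifies the weight condition \eqref{weightcond} from moderateness of $\omega_1$ and $\vartheta$, and then invokes Theorem~\ref{thm0.3} with the exponent triple $(p',q',p,q,\infty,1)$. Your verification of the weight inequality is the same computation in the $(X_0,X_1,X_2)$ variables that the paper writes after the substitution $X=X_2+X_0$, $Y=X_2-X_0$, $Z=X_1-X_0$. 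Part (4) is combined from (1)--(3) in both treatments.

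The only substantive difference is in (1). The paper does not carry out any parametrix construction; it simply invokes \cite[Theorem~3.1]{GrochToft1}, which furnishes the isomorphisms $\op^w(b_j):M^{p,q}_{(\vartheta)}\to M^{p,q}_{(\vartheta/\vartheta_j)}$ with $b_j\in S_{(\vartheta_j)}$, $c_j\in S_{(1/\vartheta_j)}$ and $b_j\wpr c_j=c_j\wpr b_j=1$ ready-made. Your sketch via ellipticity, approximate parametrix, and Neumann correction is the classical Shubin-calculus route to the same conclusion, and you rightly flag the passage from approximate to exact inverse as the delicate step: one must argue that $1+\op^w(r)$ is actually invertible (not automatic, since a smoothing perturbation can have $-1$ in its spectrum) and that the correction stays in the right symbol class. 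That is precisely the content packaged in \cite{GrochToft1}, so citing it, as the paper does, is the cleaner option; your outline is not wrong, just incomplete at that point.

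One small slip: the decay in $Y$ of $\maclV_\Phi a(X,Y)$ for $a\in S_{(\vartheta)}$ comes from integration by parts in the integration variable $Z$, not in $Y$.
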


\par

\begin{proof}
The assertion (1) follows immediately from
\cite[Theorem 3.1]{GrochToft1}. In order to prove (2) and (3), we
first use the assumption that $\omega _1$ and $\vartheta$ are
moderate, which gives
\begin{alignat*}{2}
\omega _1(X,Y) &\lesssim \omega _1(X-Y+Z,Z)\eabs {Y-Z}^N,
\\[1ex]
\omega _1(X,Y) &\lesssim \omega _1(X+Z,Y-Z)\eabs {Z}^N, 
\\[1ex]
\frac 1{\vartheta (X+Y)} &\lesssim \frac {\eabs {Y-Z}^N}{\vartheta (X+Z)}, 
\qquad \frac 1{\vartheta (X-Y)} \lesssim
\frac {\eabs {Z}^N}{\vartheta (X-Y+Z)},
\end{alignat*}
for some $N \geq 0$. The assumption in (2) leads to
\begin{align}
\omega _2(X,Y) &\lesssim \omega _1(X-Y+Z,Z) \,
\frac {\eabs {Y-Z}^{2N}}{\vartheta (X+Z)},\label{Weights(2)}
\intertext{and the assumption in (3) gives}
\omega _2(X,Y) &\lesssim \frac {\eabs {Z}^{2 N}}{\vartheta (X-Y+Z)} \,
\omega _1(X+Z,Y-Z). \label{Weights(3)}
\end{align}
If we set $\omega (X,Y) = \eabs Y^{2 N}/\vartheta (X)$ then
Theorem \ref{thm0.3} with \eqref{Weights(2)} and
\eqref{Weights(3)}, respectively, shows that the map \eqref{Weylmap}
from $\mascS \times \mascS$ to $\mascS$ extends uniquely
to a continuous map from $\splM ^{p,q}_{(\omega _1)}\times
\splM ^{\infty ,1}_{(\omega )}$ and $\splM ^{\infty ,1}_{(\omega )}\times
\splM ^{p,q}_{(\omega _1)}$, respectively, to $\splM ^{p,q}_{(\omega _2)}$.
 The assertions (2) and (3) now follow from
$S_{(\vartheta )}\subseteq \splM ^{\infty ,1}_{(\omega)}$, which
is a consequence of \cite[Proposition 2.7 (3)]{GrochToft1}.

\par

Finally (4) follows by a straight-forward combination of (1)--(3).
\end{proof}

\par

Lemma \ref{WeightReduction} shows that for weights $\omega_j$,
$j=0,1,2$, satisfying \eqref{weightcond2} we have
\begin{equation*}
\| a \wpr b \|_{\splM _{(1/\omega_0)}^{p_0',q_0'}}
\lesssim \| a \|_{\splM _{(\omega_1)}^{p_1,q_1}} \| b \|_{\splM
_{(\omega_2)}^{p_2,q_2}}, \quad a,b \in \maclS _{1/2}(\rr {2d}), 
\end{equation*}
if and only if
\begin{equation*}
\| a \wpr b \|_{\splM ^{p_0',q_0'}}
\lesssim \| a \|_{\splM ^{p_1,q_1}} \| b \|_{\splM ^{p_2,q_2}},
\quad a,b \in \maclS _{1/2}(\rr {2d}), 
\end{equation*}
thus reducing the problem to the case of trivial weights.

\par

It remains to show Theorem \ref{thm0.3Rev} with trivial weights.
By the last part of Lemma \ref{lemmaAthm0.3}, the
Condition \eqref{partialineq} is equivalent to the inequalities
\begin{align}
1 & \leq \frac1{p_0} + \frac1{p_1} + \frac1{p_2},
\label{suff1}
\\[1ex]
1 & \le \frac1{q_j}  +  \frac1{q_k}, \quad 0 \le j \neq k \le 2, 
\label{suff3}
\\[1ex]
2 - \frac1{q_0} - \frac1{q_1} - \frac1{q_2} & \leq \frac1{p_j},
\quad j=0,1,2,
\label{suff4}
\\[1ex]
3 & \leq \frac1{q_0} + \frac1{q_1} + \frac1{q_2} + \frac1{p_0} +
\frac1{p_1} + \frac1{p_2},
\label{suff5}
\end{align}
which we prove in the following sequence of lemmas.

\par

The first lemma shows that \eqref{suff1} and \eqref{suff3} are necessary
for the requested continuity.

\par

\begin{lemma}\label{gauss}
Let $1\leq p_j,q_j\leq\infty$ for $j=0,1,2$, and suppose
\begin{equation}\label{conttrivweight}
\|a\wpr b\|_{\splM ^{p'_0,q'_0}} \lesssim \|a\|_{\splM ^{p_1,q_1}}
\nm b{\splM ^{p_2,q_2}} \quad \text{for every}\quad
a,b\in \maclS _{1/2}(\rr {2d}).
\end{equation}
Then \eqref{suff1} and \eqref{suff3} hold.
\end{lemma}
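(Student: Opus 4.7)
The plan is to test the continuity estimate \eqref{conttrivweight} with Gaussian families and compare the asymptotic scalings of both sides. For \eqref{suff1} a single isotropic Gaussian suffices, while for \eqref{suff3} I use a sum of symplectic-modulated Gaussians. With $g_t(X) = e^{-t|X|^2}$ on $\rr {2d}$, a standard Gaussian symplectic-STFT calculation with window $\Phi = g_1$ yields
$$
\|g_t\|_{\splM^{p,q}} \asymp t^{-d/p}\ (t \to 0^+), \qquad \|g_t\|_{\splM^{p,q}} \asymp t^{-d/q'}\ (t \to \infty),
$$
and from $a \wpr b = (2\pi)^{-d/2} a *_\sigma \mascF_\sigma b$ together with $\mascF_\sigma g_s = C s^{-d} g_{1/s}$ one derives the explicit Weyl product of two isotropic Gaussians
$$
g_t \wpr g_s = C(st+1)^{-d}\, g_{u(t,s)}, \qquad u(t,s) = \frac{t+s}{st+1}.
$$

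For \eqref{suff1}, I set $a = b = g_t$ and let $t \to 0^+$; then $u(t,t) = 2t/(t^2+1) \to 0$ and the prefactor $(t^2+1)^{-d} \to 1$, giving $\|g_t \wpr g_t\|_{\splM^{p_0',q_0'}} \asymp t^{-d/p_0'}$, while $\|g_t\|_{\splM^{p_j,q_j}} \asymp t^{-d/p_j}$ for $j=1,2$. Substituting into \eqref{conttrivweight} and matching exponents in $t$ as $t \to 0^+$ forces $1/p_0' \le 1/p_1 + 1/p_2$, which is \eqref{suff1}.

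For \eqref{suff3}, I use sums of symplectic-modulated Gaussians. Set
$$
a_N = \sum_{k=1}^N c_k\, M_{V_k}\, g_t, \qquad b_N = \sum_{k=1}^N c_k'\, M_{V_k'}\, g_t,
$$
where $\{V_k\}, \{V_k'\} \subset \rr {2d}$ are $N$-point subsets chosen so that (i) the modulations are well separated (separation much larger than $\sqrt{t}$), yielding $\|a_N\|_{\splM^{p_1,q_1}} \asymp N^{1/q_1}\|g_t\|_{\splM^{p_1,q_1}}$ and likewise for $b_N$ — since symplectic modulations shift the symplectic STFT in the frequency variable, disjoint modulations contribute disjointly to the $L^{q}$ part of the modulation norm; and (ii) the pairs $(V_k, V_k')$ are matched so that the diagonal Weyl cross-products $M_{V_k} g_t \wpr M_{V_k'} g_t$ accumulate at a common location in phase space, with the unit-modulus phases $c_k, c_k'$ adjusted for coherent addition, while the off-diagonal cross-products decay exponentially in $|V_k - V_l'|$. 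Under these conditions one obtains $\|a_N \wpr b_N\|_{\splM^{p_0',q_0'}} \gtrsim N$ times an $N$-independent factor. Substituting into \eqref{conttrivweight} gives $N \lesssim N^{1/q_1 + 1/q_2}$, whence $1 \le 1/q_1 + 1/q_2$, which is \eqref{suff3} for $(j,k) = (1,2)$; the remaining two inequalities of \eqref{suff3} follow by applying the same test to the two cyclically-permuted continuity estimates obtained from the duality identities \eqref{duality0}.

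The main obstacle is the construction in the second part: the phases in the Weyl cross-products $M_{V_k} g_t \wpr M_{V_k'} g_t$ must be computed explicitly (via the integral representation of the Weyl product and the covariance of Weyl quantisation under the Heisenberg representation), a matching condition $V_k \leftrightarrow V_k'$ must be chosen that collocates the $N$ resonant Gaussian peaks at a common point, and the coefficients $c_k, c_k'$ must be selected to eliminate the residual phases so that the $N$ contributions sum coherently. Obtaining the output scaling $N$ (rather than $N^{1/p_0'}$, which is what one would get from a naive translated-sum test via the translation structure of the output STFT in the position variable) is precisely what distinguishes the pure $q$-inequality \eqref{suff3} from the weaker mixed inequality $1 \le 1/p_0 + 1/q_1 + 1/q_2$.
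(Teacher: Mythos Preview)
Your argument for \eqref{suff1} is correct and is exactly the paper's: both test \eqref{conttrivweight} with the isotropic Gaussian $a_\lambda(X)=e^{-\lambda|X|^2}$, use the closed formula for $a_\lambda\wpr a_\lambda$, and let $\lambda\to 0^+$.

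Your argument for \eqref{suff3}, however, has a genuine gap that you yourself flag but do not resolve. The coherent accumulation you need cannot be achieved with modulated Gaussians. A direct twisted-convolution computation (take $t=1$ for definiteness) shows that $\mascF_\sigma\big(M_{V}g_1\wpr M_{V'}g_1\big)(X)$ is, up to a fixed constant,
\[
e^{\,i\sigma(X,\,V-V')}\;e^{-|X+(V+V')/2|^2}\;e^{-|V+V'|^2/4}.
\]
Thus the amplitude is exponentially small unless $V_k+V_k'$ is bounded, forcing essentially $V_k'=-V_k$; but then the $k$-th diagonal term carries the modulation $e^{i\sigma(X,2V_k)}$, and after inverse symplectic Fourier transform the diagonal part of $a_N\wpr b_N$ is a sum of \emph{translates} $\sum_k d_k\,T_{V_k}h$ of one fixed Gaussian~$h$. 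Multiplying by scalar phases $c_k,c_k'$ only adjusts the scalars $d_k$; it cannot superpose the translates. A sum of $N$ well-separated translates has $\splM^{p_0',q_0'}$-norm $\asymp N^{1/p_0'}$, so your construction produces precisely the mixed inequality $1/p_0'\le 1/q_1+1/q_2$ that you said you wanted to avoid, not \eqref{suff3}. More generally, for all diagonal terms to be proportional to one fixed function one would need both $V_k+V_k'$ and $V_k-V_k'$ constant in $k$, hence $V_k$ constant, contradicting separation.

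The paper takes a different route for \eqref{suff3}: it first records (via \eqref{duality0} and $\overline{a\wpr b}=\overline b\wpr\overline a$) that the hypothesis \eqref{conttrivweight} is symmetric under all permutations of the triple $(p_j,q_j)_{j=0,1,2}$, and then invokes \cite[Corollary~3.4]{HTW}, whose proof goes through the rank-one/Wigner-distribution structure of the Weyl calculus rather than through sums of modulated Gaussians.
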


\par

\begin{proof}
First we observe that the assumption \eqref{conttrivweight}
combined with \eqref{duality0}, duality and $\overline{a \wpr
b} = \overline{b} \wpr \overline{a}$ (cf. \cite{HTW})  imply that 
\begin{equation*}
\nm {a\wpr b}{\splM ^{r'_0,s'_0}} \lesssim \nm a{\splM
^{r_1,s_1}} \nm b {\splM ^{r_2,s_2}}
\quad \text{for every}\quad
a,b\in \maclS _{1/2}(\rr {2d}),
\end{equation*}
when $r_j=p_{\sigma (j)}$ and $s_j=q_{\sigma (j)}$, where
$\sigma$ is any permutation $\{0,1,2\}$.
By \cite[Corollary 3.4]{HTW} we therefore have
\begin{equation*}
1 \leq \frac1{q_j} + \frac1{q_k}, \quad 0 \leq j \neq k \leq 2,
\end{equation*}
i.{\,}e. \eqref{suff3}.

\par

In order to show \eqref{suff1}, we consider the family of functions $a_\lambda(X)=e^{-\lambda|X|^2}$,
$X\in\rdd$, $\lambda>0$. Straight-forward computations show that (cf.
\cite[Proposition 3.1]{HTW})
\begin{equation}\label{dilgausmod}
\|a_\lambda\|_{\splM ^{r,s}}^{1/d}=\pi^{\frac1r+\frac1s-1}r^{-1/r}
s^{-1/s}\lambda^{-1/r}(1+\lambda)^{1/r+1/s-1}
\end{equation}
and
\begin{equation}\label{gauswpr}
a_\lambda\wpr a_\lambda(X)=(1+\lambda^2)^{-d}\exp \left(-\frac{2\lambda}
{1+\lambda^2}|X|^2\right).
\end{equation}
This gives
\begin{multline}\label{normpqwprgauss}
\|a_\lambda\wpr a_\lambda\|
_{\splM ^{r,s}}^{1/d}
\\[1ex]
=
\pi^{1/r+1/s-1}r^{-1/r} s^{-1/s}
(1+\lambda^2)^{-1/s} (2\lambda)^{-1/r}(1+\lambda)^{2(1/r+1/s-1)}.
\end{multline}
From the assumption \eqref{conttrivweight} we obtain
\begin{multline*}
\pi^{1/{p'_0}+1/{q'_0}-1}{p'_0}^{-1/{p'_0}}
{q'_0}^{-1/{q'_0}} (1+\lambda^2)^{-1/{q'_0}}(2\lambda)
^{-1/{p'_0}}(1+\lambda)^{2(1/{p'_0}+1/{q'_0}-1)}\quad\quad\quad
\\[1ex]
\leq C \pi^{1/{p_1}+1/{q_1}-1}{p_1}^{-1/{p_1}}
{q_1}^{-1/{q_1}} \lambda^{-1/{p_1}}(1+\lambda)^{1/{p_1}
+ 1/{q_1}-1}
\\[1ex]
\times \pi^{1/{p_2}+
1/{q_2}-1}{p_2}^{-1/{p_2}} {q_2}^{-1/{q_2}} \lambda^{-1/{p_2}}
(1+\lambda)^{1/{p_2}+1/{q_2}-1}.
\end{multline*}
Letting $\lambda\to0$ gives the inequality \eqref{suff1}.
\end{proof}

\par

Next we introduce more general Gaussians of the form
\begin{equation}\label{alambdamu}
a_{\lambda ,\mu}(x,\xi ) =e^{-\lambda |x^2|-\mu |\xi |^2},
\quad x,\xi \in \rr d, \quad \lambda,\mu>0.
\end{equation}
We consider $a_{\lambda _1,\mu _1}\wpr a_{\lambda _2,\mu _2}$,
where $\lambda _j,\mu _j >0$ satisfy
\begin{equation}\label{mulambdacond}
\frac {\lambda _1}{\mu _1} = \frac {\lambda _2}{\mu _2},
\end{equation}
so that
\begin{equation}\label{varthetacond}
\nu \asymp \nu _0\quad \text{when}\quad
\nu = 1+\lambda _1\mu _2 = 1+\lambda _2\mu _1,\
\nu _0= 1+\lambda _1\mu _2 +\lambda _2\mu _1.
\end{equation}

\par

The first part of the following result follows by straight-forward
computations and \eqref{tvist1}.
The other statements follow from the first part and
\cite[Lemma 1.8]{Toft2}.

\par

\begin{lemma}\label{lemmamixedgaussian}
Let $r,s\in [1,\infty]$, let $\lambda, \mu, \lambda _j,\mu _j >0$, $j=1,2$, satisfy
\eqref{mulambdacond}, and define $\nu$ and $\nu _0$
by \eqref{varthetacond}.
Then
$$
a_{\lambda _1,\mu _1}\wpr a_{\lambda _2,\mu _2} (x,\xi) =
(2 \pi)^{-d/2} \nu ^{-d}e^{-(\lambda _1+\lambda _2)|x|^2/\nu }
e^{-(\mu _1+\mu _2)|\xi |^2/\nu },
$$
$$
\nm {a_{\lambda ,\mu}}{\splM ^{r,s}} ^{1/d}= c_{r,s}(\lambda \mu)^{-1/2r}\big (
(1+\lambda )(1+\mu )  \big ) ^{(1/r+1/s-1)/2} ,
$$
and
\begin{multline*}
\nm {a_{\lambda _1,\mu _1}\wpr a_{\lambda _2,\mu _2}}
{\splM ^{r,s}} ^{1/d}
\\[1ex]
=C_{r,s} \nu ^{-1/s}\big (  (\lambda _1+\lambda _2)(\mu _1
+\mu _2) \big )^{-1/(2r)} \big (  (\nu +\lambda _1+\lambda
_2)(\nu +\mu _1 +\mu _2) \big )^{(1/r + 1/s -1)/2},
\end{multline*}
for some constants $c_{r,s}, C_{r,s} >0$ which only depend on $r,s$.
\end{lemma}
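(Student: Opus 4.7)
The plan is to prove part (1) by a direct Gaussian computation using the identity \eqref{tvist1}, and then to derive parts (2) and (3) by combining (1) with the known formula for the modulation-space norm of a separable Gaussian (essentially \cite[Lemma 1.8]{Toft2}).

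For part (1), I first compute the symplectic Fourier transform of $a_{\lambda_2,\mu_2}$. Since this is a product Gaussian, $\mascF_\sigma a_{\lambda_2,\mu_2}$ separates into two $d$-dimensional Fresnel integrals, yielding
$$
\mascF_\sigma a_{\lambda_2,\mu_2}(x,\xi) = (\lambda_2\mu_2)^{-d/2}\, e^{-|x|^2/\mu_2 - |\xi|^2/\lambda_2}.
$$
Substituting this into \eqref{tvist1} and unfolding \eqref{twist1} reduces the Weyl product to a product of $2d$ one-dimensional Gaussian integrals. Completing the square, carrying out the integrals, and using the balance condition \eqref{mulambdacond} (which is exactly what is needed so that the residual Gaussian in the output has a common denominator $\nu$ and $\nu \asymp \nu_0$), collapses the result to the stated expression.

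For part (2), I use the Gaussian window $\Phi(X) = e^{-|X|^2/2}$ on $\mathbf R^{2d}$. Since $a_{\lambda,\mu}$ is a separable Gaussian, $\maclV_\Phi a_{\lambda,\mu}(X,Y)$ is again a Gaussian in all four of the variable pairs, completely separated in the directions coming from $x$, $\xi$ and from the conjugate phase variables. Taking the $L^{r,s}$ norm in the $(X,Y)$ pair reduces, via Fubini, to products of one-dimensional integrals of the form $\|e^{-\alpha t^2}\|_{L^p(\mathbf R)} = (\pi/(p\alpha))^{1/(2p)}$. The exponents produced by the computation are proportional to $\lambda/(1+\lambda)$, $\mu/(1+\mu)$, $1+\lambda$ and $1+\mu$, and collecting the powers gives the claimed formula up to a constant $c_{r,s}$ that depends only on $r$ and $s$.

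For part (3), I plug the formula from (1) into the formula from (2). By (1), the Weyl product $a_{\lambda_1,\mu_1} \wpr a_{\lambda_2,\mu_2}$ equals $(2\pi)^{-d/2}\nu^{-d}\, a_{\alpha,\beta}$ with $\alpha = (\lambda_1+\lambda_2)/\nu$ and $\beta = (\mu_1+\mu_2)/\nu$, so its $\splM^{r,s}$-norm is $(2\pi)^{-d/2}\nu^{-d}\,\|a_{\alpha,\beta}\|_{\splM^{r,s}}$. Substituting the formula from (2), the $\nu$-powers coming from $(\alpha\beta)^{-1/(2r)}$ and $((1+\alpha)(1+\beta))^{(1/r+1/s-1)/2}$ combine with the prefactor $\nu^{-1}$ (after taking $1/d$-th roots) to the net power $-1 + 1/r - (1/r+1/s-1) = -1/s$, reproducing the claimed factor $\nu^{-1/s}$. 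The main obstacle is purely bookkeeping: the several exponents in $\lambda_j$, $\mu_j$, $\nu$ and the indices $r,s$ must be tracked with care, but the constraint \eqref{mulambdacond} is exactly what allows the denominators in the Gaussian in part (1) to collapse to a single $\nu$, without which the subsequent algebra in parts (2) and (3) would not close.
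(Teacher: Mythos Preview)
Your proposal is correct and follows essentially the same approach as the paper: the paper states that the first formula follows by straight-forward computations and \eqref{tvist1}, and that the remaining statements follow from the first part together with \cite[Lemma 1.8]{Toft2}. Your write-up simply makes these computations explicit, including the exponent bookkeeping for the $\nu$-powers in part (3), which the paper leaves to the reader.
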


\par

Lemma \ref{lemmamixedgaussian} is used in the proof of the 
following result, which shows that \eqref{suff5} is a consequence of 
the requested continuity.

\par

\begin{lemma}\label{LemmaSuff5}
If \eqref{conttrivweight} holds then \eqref{suff5}
is true.
\end{lemma}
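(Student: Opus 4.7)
The plan is to apply the continuity assumption \eqref{conttrivweight} to a one-parameter family of Weyl products of Gaussians and extract \eqref{suff5} from the resulting asymptotic inequality.

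Concretely, I would test with the symmetric choice $\lambda_1=\lambda_2=\lambda$ and $\mu_1=\mu_2=1/\lambda$ in Lemma \ref{lemmamixedgaussian}. The compatibility condition \eqref{mulambdacond} is trivially satisfied, and the associated quantity is $\nu=1+\lambda_1\mu_2 = 2$, which is a fixed constant independent of $\lambda$. The virtue of this choice is that $\lambda$ appears only through the factors $(1+\lambda)(1+1/\lambda) \asymp \lambda$ and $\lambda_1\mu_1\asymp 1$, so both the single-Gaussian and the Weyl-product norms reduce to a single clean power of $\lambda$.

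From the formulas in Lemma \ref{lemmamixedgaussian} I would read off, for $\lambda\to\infty$ and any $r,s\in[1,\infty]$,
\begin{equation*}
\|a_{\lambda,1/\lambda}\|_{\splM^{r,s}}^{1/d}\asymp\lambda^{(1/r+1/s-1)/2},
\qquad
\|a_{\lambda,1/\lambda}\wpr a_{\lambda,1/\lambda}\|_{\splM^{r,s}}^{1/d}\asymp\lambda^{(1/r+1/s-1)/2}.
\end{equation*}
Plugging these asymptotics into \eqref{conttrivweight} with $(r,s)=(p_0',q_0')$ on the left and $(r,s)=(p_j,q_j)$, $j=1,2$, on the right, I obtain a uniform bound
\begin{equation*}
\lambda^{(1/p_0'+1/q_0'-1)/2}\lesssim \lambda^{(1/p_1+1/q_1-1)/2}\,\lambda^{(1/p_2+1/q_2-1)/2},\qquad \lambda>1.
\end{equation*}
Letting $\lambda\to\infty$ forces the exponent on the left to be no larger than the one on the right, i.e.
\begin{equation*}
\frac{1}{p_0'}+\frac{1}{q_0'}-1\le \frac{1}{p_1}+\frac{1}{q_1}+\frac{1}{p_2}+\frac{1}{q_2}-2,
\end{equation*}
which after substituting $1/p_0'=1-1/p_0$, $1/q_0'=1-1/q_0$ rearranges to \eqref{suff5}.

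The only subtlety, and the only place where some care is needed, is verifying the two asymptotic equivalences above (checking that the factors $\nu^{-1/s}$, $((\lambda_1+\lambda_2)(\mu_1+\mu_2))^{-1/(2r)}$ and $(\nu+\lambda_1+\lambda_2)(\nu+\mu_1+\mu_2)$ contribute only bounded constants or match the $(1+\lambda)(1+1/\lambda)$ growth), which is just bookkeeping once $\nu$ is pinned down to be constant; no further ideas are needed.
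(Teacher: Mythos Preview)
Your proposal is correct and follows essentially the same route as the paper: the paper also tests with $\lambda_1=\lambda_2=\lambda$, $\mu_1=\mu_2=1/\lambda$ (so $\nu=2$), reads off from Lemma~\ref{lemmamixedgaussian} that both sides scale like $\lambda^{d(1/r+1/s-1)/2}$, and compares exponents as $\lambda\to\infty$ to obtain \eqref{suff5}. The only cosmetic difference is that the paper states the asymptotics directly rather than spelling out the ``bookkeeping'' you mention.
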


\par

\begin{proof}
Let $\lambda _1=\lambda _2=1/\mu _1=1/\mu _2 =\lambda >1$. Then
$\nu  = 2$, and Lemma \ref{lemmamixedgaussian} gives
\begin{equation*}
\nm {a_{\lambda _1,\mu _1}\wpr a_{\lambda _2,\mu _2}}
{\splM ^{p_0',q_0'}} \asymp \lambda ^{d(1/p_0' +1/q_0'-1)/2},
\end{equation*}
and
$$
\nm {a_{\lambda _j,\mu _j}}{\splM ^{p_j,q_j}} \asymp \lambda
^{d(1/p_j +1/q_j-1)/2}, \quad j=1,2.
$$
The assumption \eqref{conttrivweight} together with $\lambda
\rightarrow +\infty$ give
$$
\frac 1{p_0'}+\frac 1{q_0'} -1 \le \frac 1{p_1}+\frac 1{q_1} -1 
+  \frac 1{p_2}+\frac 1{q_2} -1,
$$
which is the same as \eqref{suff5}.
\end{proof}

\par

Finally we show that \eqref{conttrivweight} implies \eqref{suff4}.

\par

\begin{lemma}\label{LemmaSuff4}
If \eqref{conttrivweight} holds then \eqref{suff4} is true.
\end{lemma}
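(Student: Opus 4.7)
The plan is to mirror the strategy of Lemmas \ref{gauss} and \ref{LemmaSuff5}: we will apply the hypothesis \eqref{conttrivweight} to a carefully chosen family of Gaussian test functions, read off the behavior of both sides of the estimate along an appropriate asymptotic regime, and extract \eqref{suff4} by equating the resulting exponents of the logarithm. As in those lemmas, the basic tools are Lemma \ref{lemmamixedgaussian} (which gives closed forms for $\|a_{\lambda,\mu}\|_{\splM ^{r,s}}$ and $\|a_{\lambda _1,\mu _1}\wpr a_{\lambda _2,\mu _2}\|_{\splM ^{r,s}}$) together with the ratio constraint \eqref{mulambdacond} under which those formulas are valid.

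The first step is a symmetry reduction. The duality identities \eqref{duality0}, combined with the fact that $\overline{a\wpr b}=\overline b\wpr\overline a$, show that the estimate \eqref{conttrivweight} is preserved under an arbitrary permutation $\sigma$ of the triple $(0,1,2)$ applied simultaneously to the $p_j$'s and $q_j$'s (with the primed/unprimed flip on the output index). Consequently it suffices to establish \eqref{suff4} for the single index $j=0$, i.e.\ the inequality $\masfR _2(\mathbf q')\le 1/p_0$, and then invoke the permutation symmetry to obtain the two remaining cases.

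The second step is the choice of test functions. Lemmas \ref{gauss} and \ref{LemmaSuff5} cover the diagonal scalings $a=b=a_{\lambda,\lambda}$ and $a=b=a_{\lambda,1/\lambda}$, which yield \eqref{suff1}, \eqref{suff3}, and \eqref{suff5} respectively. To reach \eqref{suff4} one needs an \emph{asymmetric} pair. The plan is to take
$$
a=a_{\lambda _1,\mu _1},\qquad b=a_{\lambda _2,\mu _2},\qquad
\lambda _1/\mu _1=\lambda _2/\mu _2,
$$
with the individual widths tuned so that the logarithmic exponents in $\|a_{\lambda _1,\mu _1}\wpr a_{\lambda _2,\mu _2}\|_{\splM ^{p_0',q_0'}}$ depend only on the combination $1/p_0$ while those in $\|a_{\lambda _j,\mu _j}\|_{\splM ^{p_j,q_j}}$ depend only on the combinations $1/q_j$. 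A candidate family is $a_{\lambda,1}\wpr a_{1,1/\lambda}$ as $\lambda\to\infty$; more generally one may tune an extra parameter so as to interpolate between the regimes already used in Lemmas \ref{gauss} and \ref{LemmaSuff5}. Inserting the norm formulas from Lemma \ref{lemmamixedgaussian}, one obtains an inequality between $\log \lambda$ coefficients; sending $\lambda\to\infty$ and simplifying yields an estimate of the form
$$
\frac {1}{q_0'}+\frac {1}{q_1'}+\frac {1}{q_2'}-1 \;\le\; \frac 1{p_0},
$$
which is precisely \eqref{suff4} for $j=0$, and the remaining two cases then follow by the permutation symmetry explained above.

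The main technical obstacle is identifying the right asymptotic regime. The diagonal Gaussian families (either isotropic $a_{\lambda,\lambda}$ or hyperbolic $a_{\lambda,1/\lambda}$) only produce \eqref{suff1}, \eqref{suff3}, \eqref{suff5}, and the permutation-symmetric weaker inequalities $1\le 1/p_j+1/q_k+1/q_l$; none of these alone imply \eqref{suff4}. The genuine content of the lemma is that a suitably chosen \emph{off-diagonal} scaling extracts the missing boundary inequality. In practice this amounts to a careful bookkeeping of the exponents of the three quantities $\nu$, $(\lambda _1+\lambda _2)(\mu _1+\mu _2)$ and $(\nu+\lambda _1+\lambda _2)(\nu+\mu _1+\mu _2)$ appearing in Lemma \ref{lemmamixedgaussian}, chosen so that the resulting linear form in $1/p_j,1/q_j$ coincides (up to a trivial rescaling in $\log\lambda$) with the functional in \eqref{suff4}.
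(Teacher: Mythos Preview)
Your proposal contains a genuine gap: the Gaussian family cannot yield \eqref{suff4}, and you have not actually identified a scaling regime that works --- you only assert that ``a suitably chosen off-diagonal scaling'' will do the job. To see the obstruction, note that to obtain the inequality $1/p_0+1/q_0+1/q_1+1/q_2\ge 2$ from the norm formulas in Lemma \ref{lemmamixedgaussian} you would need (after taking logarithms and passing to the asymptotic regime) the coefficients of $1/p_1$ and $1/p_2$ to vanish while the coefficients of $1/q_0,1/q_1,1/q_2$ and $1/p_0$ remain nonzero. The coefficient of $1/p_j$ in $\log\|a_{\lambda_j,\mu_j}\|_{\splM^{p_j,q_j}}$ is $\tfrac12\log((1+\lambda_j)(1+\mu_j)/(\lambda_j\mu_j))$, which vanishes only when $\lambda_j,\mu_j\to\infty$. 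But in that regime $\nu\asymp\lambda_1\mu_2$ dominates $\lambda_1+\lambda_2$ and $\mu_1+\mu_2$, so $(\nu+\lambda_1+\lambda_2)(\nu+\mu_1+\mu_2)\asymp\nu^2$ and the coefficient of $1/q_0$ (namely $\log\nu-\tfrac12\log((\nu+\lambda_1+\lambda_2)(\nu+\mu_1+\mu_2))$) vanishes as well. Thus no choice of parameters simultaneously isolates $1/p_0$ and $1/q_0$ while killing $1/p_1,1/p_2$. Concretely, one can check that for the exponents $p_0=\infty$, $p_1=p_2=1$, $q_0=q_1=q_2=2$ (which satisfy \eqref{suff1}, \eqref{suff3}, \eqref{suff5} but not \eqref{suff4}) the ratio $\|a\wpr b\|_{\splM^{1,2}}/(\|a\|_{\splM^{1,2}}\|b\|_{\splM^{1,2}})$ stays bounded over the entire Gaussian family, so Gaussians cannot rule this point out.

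The paper's proof is of a completely different nature. It is a contradiction argument built not from Gaussians but from sums of translated bump functions $\sum_n c_n\tau_n g$ with carefully weighted coefficients, combined with an operator-theoretic step: one manufactures symbols $a_N=W_{\varphi,f_{2,N}}$ and $b$ (with $\op^w(b)h=f_1\cdot(f_3*h)$), uses the continuity hypothesis to deduce that $\op^w(a_N\wpr b)$ is uniformly bounded from $M^{q_0}$ to $M^{q_0'}$, and then shows via a direct computation that $\op^w(a_N\wpr b)f_4$ blows up as $N\to\infty$. The construction is adapted from \cite[Theorem 3.6]{HTW} and the counterexamples in \cite{Grochenig1c}; its discrete, lacunary structure is essential and has no Gaussian analogue.
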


\par

\begin{proof}
By duality it suffices to show that \eqref{conttrivweight} implies that
\begin{equation*}
\frac1 {p_0'} + \frac 1 {q_0'} \leq \frac 1 {q_1} + \frac 1 {q_2} \qquad
\mbox{or} \qquad \frac1 {p_2'} + \frac 1 {q_0'} \leq \frac 1 {q_1} +
\frac 1 {q_2}.
\end{equation*}
The proof is a contradictary argument. We assume that
\eqref{conttrivweight} holds,
\begin{equation}\label{assump2}
\frac1 {p_0'} + \frac 1 {q_0'} > \frac 1 {q_1} + \frac 1 {q_2}
\end{equation}
and
\begin{equation}\label{assump3}
\frac1 {p_2'} + \frac 1 {q_0'} > \frac 1 {q_1} + \frac 1 {q_2}.
\end{equation}
This will lead to a contradiction which shows that
\eqref{conttrivweight} implies
\begin{equation*}
\frac1 {p_0'} + \frac 1 {q_0'} \leq \frac 1 {q_1} + \frac 1 {q_2},
\end{equation*}
i.e. \eqref{suff4} with $j=0$. The cases $j=1,2$ follows by duality.

\par

Thus we assume \eqref{assump2}, \eqref{assump3} and
\begin{equation}\label{inklusion3}
\splM ^{p_1,q_1} (\rr {2d}) \wpr \splM ^{p_2,q_2} (\rr {2d})
\subseteq \splM ^{p_0',q_0'} (\rr {2d}).
\end{equation}
From \eqref{assump3} we may conclude that there exists $\ep>0$ such that
\begin{equation}\label{expvillkor}
\frac1{p_2} + \frac1{q_2} + \frac1{q_1} - \frac1{q_0'}  < 1 - \frac{4 \ep}{d}.
\end{equation}

\par

The rest of the proof is an adaptation of the proof of
\cite[Theorem 3.6]{HTW} (see also the proof of
\cite[Theorem 5]{Grochenig1c}).

\par

Let $0\le g\in C_0^\infty (\rr d)\setminus 0$ be supported in
a ball with center in the origin and radius $1/4$.
For $n\in\mathbf Z^d$ we  set
\begin{equation}\label{mathsfd}
\mathsf d_n=\mathsf d_{n,\ep }=\left\{
\begin{array}{ll}
  1&n=0
\\[1ex]
  |n|^{-(d+{\ep })}&n\ne 0\,,
\end{array}\right.
\end{equation}
so that $\{ \mathsf d_n\} \in l^1$.
We also set
\begin{equation*}
{\alpha} _n = \mathsf d_n^{1/p_2}, \qquad  {\beta} _n =
\mathsf d_n^{1/q_1}, \qquad
{\gamma} _n = \mathsf d_n^{1/q_2}, \qquad
{\eta} _n = \mathsf d_n^{1/q_0},
\end{equation*}
and we let $\tau _n$ be the operator $\tau _nf = f(\cdot -n)$.
Our plan is to use the family of functions on $\rr d$ 
\begin{equation}\label{f1234def}
\begin{alignedat}{2}
f_1 &= \sum _{n}\alpha _n\tau _ng, & \qquad f_2=f_{2,N} &= \sum
_{|n|\le N}\beta _n\tau _ng,
\\[1ex]
f_3 &= \sum _{n} \gamma _n\tau _ng, & \qquad f_4 &= \sum
_{n}\eta _n\tau _ng,
\end{alignedat}
\end{equation}
to construct an element $b\in \splM ^{p_2,q_2}(\rr {2d})$ and
a sequence $\{ a_N\}$ in $\mathscr S(\rr {2d})$ such that
$\{ a_N\}$ is uniformly bounded in $\splM ^{p_1,q_1} (\rr {2d})$
but $\{ a_N\wpr b\}$ is not a
bounded sequence in $\splM ^{p_0',q_0'} (\rr {2d})$. This is the
desired contradiction to \eqref{inklusion3}.

\par

By \cite[Remark 1.3]{HTW} we know that the sequence $\{ f_{2,N}\}
\subseteq \mathscr S(\rr d)$ is uniformly bounded in $M^{q_1,1} (\rr {d})$,
and that
\begin{align*}
& f_1\in M^{p_2,1} (\rr {d}),\quad \widehat f_3 \in \mathscr
FM^{q_2,1} (\rr {d}) \subseteq M^{1,q_2} (\rr {d}),
\\[1ex]
& f_4 \in M^{q_0,1} (\rr {d}) \subseteq
M^{q_0} (\rr {d}).
\end{align*}

\par

In a moment we will prove that if we choose $\fy \in C_0^\infty (\rr
d)\setminus 0$  and define $a_N$ and $b$ as
\begin{equation}\label{aNbdef}
a_N=W_{\fy ,f_{2,N}} \quad \text{and}\quad \op ^w(b)h=f_1\cdot
(f_3*h),\quad h\in C_0^\infty (\rr d),
\end{equation}
then
\begin{equation}\label{abegensk}
\begin{aligned}
\nm {a_N}{\splM ^{p_1,q_1}} &\le C, \qquad b
\in {\splM ^{p_2,q_2}} (\rr {2d}) \quad
\text{and}
\\[1ex]
\op ^w(b)f_4 &= \sum _{n} \lambda _{n}\tau _ng_0,\quad
\text{where} \quad  g_0 = g\cdot (g*g),
\\[1ex]
\text{and}\qquad \lambda _n &\ge C |n|^{-d(1/q_2+1/p_2-1/q_0')
-\ep (1/q_2+1/p_2+1/q_0)},
\end{aligned}
\end{equation}
for some constant $C>0$ which is independent of $N$.
We note that $g * g$ is supported in a ball with center at the
origin and radius $1/2$.

\par

Assuming this for a while we may proceed as follows. From
\eqref{inklusion3} and \eqref{abegensk} we get that $(a_N\wpr b)$
is a bounded sequence in $\splM ^{p_0',q_0'} (\rr {2d})$, which
implies that $\op ^w(a_N\wpr b)$ is a uniformly bounded sequence
of continuous operators from $M^{q_0}(\rr {d})$ to
$M^{q_0'}(\rr {d})$. In fact,  \eqref{suff3} gives $2/q_0' \le
1/q_1+1/q_2$ which combined with \eqref{assump2} yield
$1/q_0' < 1/p_0'$.  Now \cite[Theorem 7.1]{Grochenig1b} or
\cite[Theorem 4.3]{Toft2} together with Proposition \ref{p1.4}
give the assertion. (See also \cite[Theorem 1.1]{CTW}.)

\par

On the other hand, since $f_4\in M^{q_0} (\rr {d})$ and
$\op ^w(b)f_4=f_1\cdot (f_3*f_4)$, we get
\begin{equation*}
\op ^w(a_N\wpr b)f_4 = (\op ^w(W_{\fy ,f_2}))(f_1\cdot (f_3*f_4)),
\end{equation*}
which by \cite[Lemma 1.6]{HTW} gives
\begin{equation}\label{aNbweylrel}
\op ^w(a_N\wpr b)f_4 =(2 \pi)^{-d/2}(f_1\cdot (f_3*f_4),f_2)\fy .
\end{equation}
Now \eqref{aNbdef}, \eqref{abegensk} and the fact that $(g_0,g)>0$
show that 
\begin{align*}
(f_1\cdot (f_3*f_4),f_2) & \ge C\sum _{|n|\le N}\lambda _n\beta _n \\
& \ge C' \sum _{|n|\le N }|n|^{-d(1/q_1+1/q_2+1/p_2-1/q_0')-\ep (1/q_1
+1/q_2+1/p_2+1/q_0)},
\end{align*}
which gives, using \eqref{expvillkor},
\begin{equation}\label{f1234uppsk}
(f_1\cdot (f_3*f_4),f_2)\ge C'\sum
_{|n|\le N } |n|^{-d}.
\end{equation}
Consequently, since the right-hand side can be made arbitrarily large
by increasing $N$, we have obtained a contradiction to the uniformly
boundedness of $\op ^w(a_N\wpr b)$ as a sequence of operators from
$M^{q_0}(\rr {d})$ to $M^{q_0'}(\rr {d})$. Hence our assumption,
contrary to the statement, is wrong, and the result follows.

\par

It remains to prove \eqref{abegensk}. From the assumptions we have
that $\fy \in C_0^\infty (\rr {d}) \subseteq M^1(\rr {d})$ and $f_2\in
M^{q_1,1} (\rr {d})$. From \cite[Theorem 4.1]{Toft2} it follows that
$a_N=W_{\fy ,f_2}$ is uniformly bounded in $M^{1,q_1} (\rr {2d})$,
and likewise in $M^{p_1,q_1} (\rr {2d})$. We have $\op ^w(b) = \op
_0(c)$ with $c=f_1\otimes \widehat f_3$ and $\op_0(c)$ is a
pseudo-differential operator of 
Kohn--Nirenberg type, i.{\,}e. $t=0$ in \eqref{e0.5}. 
Since $f_1,\widehat f_3\in M^{p_2,q_2} (\rr {d})$, it
follows that $c\in M^{p_2,q_2} (\rr {2d})$. 
By \cite[Remark 1.5]{HTW} we then obtain $b \in M^{p_2,q_2} (\rr {2d})
={\splM ^{p_2,q_2}} (\rr {2d})$. 

\par

In order to prove the last part of \eqref{abegensk} we note that
$$
f_3*f_4 =\sum _n\mu _n \tau _n (g*g),
$$
where $\{ \mu _n\}$ is the discrete convolution between $\{ \gamma _n\}$ and
$\{ \eta _n\}$, i.{\,}e.
$$
\mu _n = \sum _k\gamma _{n-k}\eta _k.
$$
By Young's inequality it follows that $(\mu _n)\in l^{r}$, where
$1/q_2+1/q_0=1+1/r$. Here \eqref{suff3} guarantees that $r \in [1,\infty]$.

\par

From the support
properties of $g$ and $g*g$, it follows that
$$
f_1\cdot (f_3*f_4) =\sum _n \lambda _n\tau _ng_0
$$
where $\lambda _n=\alpha _n\mu _n$.
We have to estimate $\lambda _n$. For any $n\in \zz d$, let
$$
\Omega _n = \sets {k\in \zz d}{|k|\le |n|,\ k\neq 0,\ k\neq n}.
$$
We have
\begin{multline*}
\mu _n = \sum _k \gamma_{n-k}\eta_k
\geq \sum_{k\in \Omega _n}
|n-k|^{-{(d+\ep )}/{q_2}}|k|^{-{(d+\ep )}/{q_0}}
\\[1ex]
\geq C |n|^{d(1-1/q_2-1/q_0) -
\ep ( 1/q_2+1/{q_0})},
\end{multline*}
for some $C>0$. Hence
\begin{multline*}
\lambda _n =\alpha _n \mu _n
\ge C|n|^{-(d+\ep)/p_2} |n|^{d(1-1/q_2-1/q_0) -
\ep ( 1/q_2+1/{q_0})}
\\[1ex]
= C |n|^{-d(1/q_2+1/p_2-1/q_0')-\ep (1/q_2+1/p_2+1/q_0)}.
\end{multline*}
This proves \eqref{abegensk} and the result follows.
\end{proof}

\begin{proof}[Proof of Theorem \ref{thm0.3Rev}]
By Lemma \ref{WeightReduction} we may assume trivial weights.
By Lemmas \ref{gauss}, \ref{LemmaSuff5} and \ref{LemmaSuff4},
the inequalities \eqref{suff1}--\eqref{suff5} hold. Thus Lemma
\ref{lemmaAthm0.3} shows that \eqref{partialineq} holds true.
\end{proof}

\section{Some particular cases}\label{sec4}

\par

We list in a table some special cases of the inclusion results
for the Weyl product on unweighted modulation spaces.
More precisely, we compare the inclusion results
in Theorem \ref{thm0.3} and \cite[Theorem 0.3$'$]{HTW}
when the exponents belong to $p_j,q_j\in \{ 1,2,\infty \}$.
The table illustrates the generality of Theorem \ref{thm0.3} as
compared to \cite[Theorem 0.3$'$]{HTW}. In fact the latter
result gives no inclusions in modulation spaces for several of
the studied cases.

\par

On the other hand \cite[Theorem 0.3$'$]{HTW} combined
with Proposition \ref{p1.4} (2) gives the inclusions in
Theorem \ref{thm0.3} for these particular cases.

\par

A corresponding table can be made for the twisted
convolution acting on Wiener amalgam spaces
$\splW ^{p,q}_{(\omega )}$, provided the involved Lebesgue
exponents $p_j,q_j$ are interchanged
(cf. Theorem \ref{thm0.3TwistConv}). 
In the literature it is common that the norm in the Wiener amalgam space
$W^{p,q}_{(\omega )}$ is defined with reversed order of the Lebesgue
exponents, i.{\,}e. the norm is defined by
$$
\nm f {W^{p,q}_{(\omega )}} \equiv \Big (\int _{\rr d}\Big (\int _{\rr d}
|V_\phi f(x,\xi )\omega (x,\xi )|^p\, d\xi \Big )^{q/p}\, dx \Big )^{1/q},
$$
instead of \eqref{modnorm2} (cf. \cite{Feichtinger1, Feichtinger2,
Feichtinger6}).

\par

With this definition of the Wiener amalgam spaces,
the table for the twisted convolution
is the same as the table below, if the Weyl product $\wpr$
is replaced by the twisted convolution $*_\sigma$, and 
the modulation spaces $\splM ^{p,q}$ are replaced by $\splW ^{p,q}$

\par

Finally we remark that the relations in the table are valid for 
weighted spaces provided the involved weights satisfy \eqref{weightcond}.

\par 

\newpage

{\begin{tabular}{c||c|c|c}
& & Theorem \ref{thm0.3}: &  \cite[Theorem 0.3$'$]{HTW}: 
\\[2ex]
    \textbf{No.} & $\mathbf{\splM ^{p_1,q_1}\wpr \splM ^{p_2,q_2}}$
    & $\mathbf{\splM ^{p_0',q_0'}}$ 
     & $\mathbf{\splM ^{p_0',q_0'}}$ 
\\[2ex]
\hline
                    & & &
\\
     \textbf{1} &  $\splM ^{1,1}\wpr \splM ^{1,1}$ & $\splM ^{1,1}$ &  ---  
\\[2ex]
     \textbf{2} &  $\splM ^{1,1}\wpr \splM ^{1,2}$ & $\splM ^{1,2}$
     &  ---  
\\[2ex]
     \textbf{3} &  $\splM ^{1,1}\wpr \splM ^{1,\infty}$
     & $\splM ^{1,\infty}$ &  ---  
\\[2ex]
     \textbf{4} &  $\splM ^{1,1}\wpr \splM ^{2,1}$ & $\splM ^{1,1}$ &  ---
     
\\[2ex]
     \textbf{5} &  $\splM ^{1,1}\wpr \splM ^{2,2}$ & $\splM ^{1,2}$
     &  --- 
\\[2ex]
     \textbf{6} &  $\splM ^{1,1}\wpr \splM ^{2,\infty}$ & $\splM ^{1,\infty}$
     &  ---  
\\[2ex]
     \textbf{7} &  $\splM ^{p,q}\wpr \splM ^{\infty,1}$
     & $\splM ^{p,q}$ &  $\splM ^{p,q}$ 
\\[2ex]
     \textbf{8} &  $\splM ^{1,1}\wpr \splM ^{\infty ,2}$
     & $\splM ^{1,2}$ &  $\splM ^{1,2}$
\\[2ex]
     \textbf{9} &  $\splM ^{1,1}\wpr \splM ^{\infty ,\infty}$
     & $\splM ^{1,\infty}$ &  $\splM ^{1,\infty}$ 
\\[2ex]
     \textbf{10} &  $\splM ^{2,2}\wpr \splM ^{2,2}$ & $\splM ^{2,2}, \splM ^{1,\infty}$
     & $\splM ^{2,2}, \splM ^{1,\infty}$
\\[2ex]
     \textbf{11} &  $\splM ^{1,2}\wpr \splM ^{1,2}$ & $\splM ^{2,2}$
     &  ---  
\\[2ex]
     \textbf{12} &  $\splM ^{1,2}\wpr \splM ^{2,1}$ & $\splM ^{1,2}$
     &  ---  
\\[2ex]
     \textbf{13} &  $\splM ^{1,2}\wpr \splM ^{2,2}$
     & $\splM ^{2,2}$, $\splM ^{1,\infty}$ 
     &  ---  
\\[2ex]
     \textbf{14} &  $\splM ^{1,2}\wpr \splM ^{\infty ,2}$
     & $\splM ^{1,\infty}$ &  $\splM ^{1,\infty}$  
\\[2ex]
     \textbf{15} &  $\splM ^{2,1}\wpr \splM ^{1,\infty}$
     & $\splM ^{1,\infty}$ &  --- 
\\[2ex]
     \textbf{16} &  $\splM ^{2,1}\wpr \splM ^{2,1}$ & $\splM ^{1,1}$ &  $\splM ^{1,1}$  
\\[2ex]
     \textbf{17} &  $\splM ^{2,1}\wpr \splM ^{2,2}$ & $\splM ^{1,2}$
     &  $\splM ^{1,2}$  
\\[2ex]
     \textbf{18} &  $\splM ^{2,1}\wpr \splM ^{2,\infty}$
     & $\splM ^{1,\infty}$ &  $\splM ^{1,\infty}$  
\\[2ex]
     \textbf{19} &  $\splM ^{2,1}\wpr \splM ^{\infty ,2}$
     & $\splM ^{2,2}$ &  $\splM ^{2,2}$  
\\[2ex]
     \textbf{20} &  $\splM ^{2,1}\wpr \splM ^{\infty ,\infty}$
     & $\splM ^{2,\infty}$ &  $\splM ^{2,\infty}$  
\\[2ex]
     \textbf{21} &  $\splM ^{2,2}\wpr \splM ^{\infty ,2}$
     & $\splM ^{2,\infty}$ &  $\splM ^{2,\infty}$  
\\[2ex]
     \textbf{22} &  $\splM ^{\infty ,2}\wpr \splM ^{\infty ,2}$ & $\splM ^{\infty ,\infty}$ &
     $\splM ^{\infty ,\infty}$ 
\end{tabular}}

\par


\end{document}